\numberwithin{equation}{section}
\newcommand{\field}[1]{\mathbb{{#1}}}
\newcommand{\bs}[1]{{\boldsymbol{#1}}}
\newcommand{\R}{\field{R}}
\newcommand{\C}{\field{C}}
\newcommand{\N}{\field{{N}}}
\newcommand{\Z}{\field{{Z}}}
\newcommand{\Q}{\field{{Q}}}
\newcommand{\K}{\field{{K}}}
\newcommand{\intpart}[1]{\left\lfloor#1\right\rfloor}
\newcommand{\disc}{\Delta}
\newcommand{\dq}{\disc_q}
\renewcommand{\d}{\,\mathrm{d}}
\newcommand{\Imm}{\mathrm{Im}}
\newcommand{\Ree}{\mathrm{Re}}
\newcommand{\Rp}{U}
\newcommand{\tc}{\bs{1}}
\newcommand{\eul}{\varphi}
\newcommand{\Pmod}[1]{\mathop{[#1]}}
\newcommand{\mltc}{\multicolumn}
\DeclareMathOperator{\odd}{odd}
\DeclareMathOperator{\even}{even}
\newtheorem{theorem}{Theorem}[section]
\newtheorem*{theorem*}{Theorem}
\newtheorem{lemma}[theorem]{Lemma}
\newtheorem*{lemma*}{Lemma}
\newtheorem{corollary}[theorem]{Corollary}
\newtheorem*{corollary*}{Corollary}
\theoremstyle{remark}
\newtheorem*{remark*}{Remark}
\newtheorem*{acknowledgements}{Acknowledgements}
\begin{document}
\title{Explicit Short Intervals for Primes in Arithmetic Progressions on GRH}

\author[A.~Dudek]{Adrian W. Dudek}
\address[A.~Dudek]{Mathematical Sciences Institute\\
         The Australian National University}
\email{adrian.dudek@anu.edu.au}

\author[L.~Greni\'{e}]{Lo\"{\i}c Greni\'{e}}
\address[L.~Greni\'{e}]{Dipartimento di Ingegneria gestionale, dell'informazione e della produzione\\
         Universit\`{a} di Bergamo\\
         viale Marconi 5\\
         24044 Dalmine (BG)
         Italy}
\email{loic.grenie@gmail.com}

\author[G.~Molteni]{Giuseppe Molteni}
\address[G.~Molteni]{Dipartimento di Matematica\\
         Universit\`{a} di Milano\\
         via Saldini 50\\
         20133 Milano\\
         Italy}
\email{giuseppe.molteni1@unimi.it}

\keywords{Primes in progressions, Cram\'{e}r's theorem, GRH}
\subjclass[2010]{Primary 11N13, Secondary 11N05}


\begin{abstract}
We prove explicit versions of Cram\'{e}r's theorem for primes in arithmetic progressions, on the assumption
of the generalised Riemann hypothesis.
\end{abstract}

\maketitle

\section{Motivations and results}
\label{sec:A1}
The purpose of this article is to combine techniques from analytic number theory with computation to
furnish explicit short interval results for primes in arithmetic progressions. This is done on the
assumption of the generalised Riemann hypothesis (GRH), and builds on the earlier work of the authors
\cite{DudekGrenieMolteni1}, where the problem was considered without reference to residue classes.

Throughout this paper, unless it is mentioned, we will be assuming GRH to be true. Let $q\in\N$ and
$a\in\Z$ with $(a,q)=1$. Unconditionally, both McCurley~\cite{McCurley} and later Kadiri~\cite{Kadiri3}
proved that, for every positive $\epsilon$ and $q_0$, there exists $\alpha=\alpha(\epsilon,q_0)$ such
that if $\log x\geq \alpha_\epsilon \log^2q$ and $q\geq q_0$, then $[x,e^{\epsilon}x]$ contains a prime
$p$ congruent to $a$ modulo $q$. They provide pairs of explicit values for $\alpha$ and $q_0$ dependent
on the choice of $\epsilon$; Kadiri's work improves on that of McCurley by providing smaller values of
$\alpha$.

Clearly, on the assumption of GRH, the result should improve significantly; Dusart proved in his Ph.D.
Thesis~\cite[Th.~3.7, p.~114]{Dusart} that when $x\geq \max(\exp(\frac{5}{4}q),10^{10})$ one has
\[
\Big|\psi(x;q,a) - \frac{x}{\eul(q)}\Big| \leq \frac{1}{4\pi}\sqrt{x}\log^2 x.
\]
This implies that there is a prime in $[x-h,x+h]$ which is congruent to $a$ modulo
$q$ provided that $h>(\frac{1}{4\pi}+\epsilon)\eul(q)\sqrt{x}\log^2 x$ and $x\geq x_0(q,\epsilon)$ for every
$\epsilon>0$.\\
Recently, in joint work with the second and third authors, Perelli~\cite[Th.~1]{GrenieMolteniPerelli}
proved that there exist absolute (i.e., independent of $x$ and $q$) positive constants $x_0$, $c_1$ and
$c_2$ such that for $x \geq  x_0$ and $c_1\eul(q) \sqrt{x}\log x \leq h \leq x$ one has
\begin{equation}\label{eq:A0}
\pi(x + h;q,a) - \pi(x;q,a) \geq c_2 \frac{h}{\eul(q)\log x}.
\end{equation}
This is in some sense the best result we can hope to prove, but the constants are not explicit.\\
In the present paper we prove the following result.
\begin{theorem}\label{th:A1}
Assume GRH, $q\geq 3$ and $(a,q)=1$. Let $\alpha$, $\delta$, $\rho$, $m$ and $m'$ be as in
Table~\ref{tab:A1} and assume
\[
h \geq \eul(q)(\alpha\log x + \delta \log q + \rho)\sqrt{x}
\]
and $x \geq (m\eul(q)\log q)^2$. Then there is a prime $p$ which is congruent to $a$ modulo $q$ with
$|p-x|<h$. Furthermore, if we assume
\[
h \geq \eul(q)((\alpha+1)\log x + \delta \log q + \rho)\sqrt{x}
\]
and $x \geq (m'\eul(q)\log q)^2$, then there are at least $\sqrt{x}$ such primes.
\end{theorem}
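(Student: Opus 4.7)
The plan is to extend the approach of the authors' earlier work \cite{DudekGrenieMolteni1} (which treated the case without residue class restriction) by applying Dirichlet's orthogonality relations to reduce the arithmetic progression to a sum over characters. Starting from
\[
\psi(x+h;q,a)-\psi(x-h;q,a)=\frac{1}{\eul(q)}\sum_{\chi\Pmod{q}}\bar\chi(a)\bigl(\psi(x+h,\chi)-\psi(x-h,\chi)\bigr),
\]
I would invoke the explicit (Weil-type) formula for each $\psi(x,\chi)$. The principal character contributes the main term $2h/\eul(q)$ plus a controllable error; each non-principal character contributes only the sum over the nontrivial zeros $\rho_\chi=\tfrac12+i\gamma_\chi$ (using GRH) plus an archimedean/trivial-zero contribution of size $O(\log(qx))$.

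The analytic core is to bound, for each $\chi$,
\[
\biggl|\sum_{\rho_\chi}\frac{(x+h)^{\rho_\chi}-(x-h)^{\rho_\chi}}{\rho_\chi}\biggr|
\]
under GRH. I would use the two-term bound
\[
\biggl|\frac{(x+h)^{\rho}-(x-h)^{\rho}}{\rho}\biggr|
\le\min\biggl(\frac{2h}{\sqrt{x-h}},\frac{2\sqrt{x+h}}{|\gamma|}\biggr),
\]
split the zero-sum at some optimal height $T$, and invoke an explicit Riemann–von Mangoldt formula for $N(T,\chi)$ (for example in the form of Trudgian or McCurley). Summing over all $\chi\Pmod{q}$ replaces the zero-counting factor $\log(qT)$ by roughly $\eul(q)\log(qT)$, which accounts for the factor $\eul(q)$ appearing in the hypothesis on $h$. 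Optimising $T$ balances the two terms and yields the constants $\alpha$, $\delta$, $\rho$ of Table~\ref{tab:A1}.

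To conclude the first assertion, comparing the zero contribution with the main term $2h/\eul(q)$ shows that the difference $\psi(x+h;q,a)-\psi(x-h;q,a)$ is strictly positive once $h$ exceeds the stated threshold and $x\ge (m\eul(q)\log q)^2$; hence at least one prime power $p^k\equiv a\Pmod{q}$ lies in $(x-h,x+h)$, and a separate elementary estimate bounding the total prime-power contribution in the interval by $O(\sqrt{x}\log x)$ (absorbed into the choice of $\rho$) upgrades this to an actual prime. For the second assertion, the same calculation with $h$ replaced by the larger threshold yields $\psi(x+h;q,a)-\psi(x-h;q,a)\ge \sqrt{x}\log(x+h)$; since each prime contributes at most $\log(x+h)$ to $\psi$ and the prime-power contribution is again $O(\sqrt{x}\log x)$, the additional $\log x$ margin in the hypothesis converts this into at least $\sqrt{x}$ distinct primes in the interval.

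The chief obstacle will be obtaining genuinely sharp explicit constants rather than qualitative estimates: every ingredient (the explicit formula with fully tracked error, the zero-counting bound for $L(s,\chi)$ uniform in $q$, and the truncation of the zero-sum) must be handled with explicit constants, and the cutoff $T$ then tuned numerically. For small $q$ or moderate $x$, explicit information on low-lying zeros of Dirichlet $L$-functions (e.g.\ Platt's verification of GRH up to bounded height) will presumably be required to keep $m$ and $m'$ small enough to be useful, and the tabulated values of $(\alpha,\delta,\rho,m,m')$ will emerge from a careful numerical optimisation rather than from closed-form expressions.
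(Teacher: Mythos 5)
Your overall architecture (orthogonality over characters, explicit formula, split of the zero sum at a height $T$, removal of prime powers, and the extra $\log x$ to convert a lower bound for the weighted sum into $\sqrt{x}$ distinct primes) is the right skeleton, and the last two steps match the paper. But the analytic core as you propose it cannot deliver the constants of Table~\ref{tab:A1}. With the unsmoothed first difference $\psi(x+h,\chi)-\psi(x-h,\chi)$ and the bound $\min\bigl(2h/\sqrt{x-h},\,2\sqrt{x+h}/|\gamma|\bigr)$, the zeros between the crossover height $U\approx x/h$ and the truncation height $T$ contribute, after summing over all $\chi\bmod q$ and dividing by $\eul(q)$, about
\[
\frac{\sqrt{x}}{\pi}\bigl(\log^2(qT)-\log^2(qU)\bigr)
\;\approx\;\frac{2\sqrt{x}}{\pi}\,\log(qU)\,\log(T/U),
\]
while the truncation error of the explicit formula forces $\log(T/U)\gg\log\log(qx)$ at the very least (and $T\asymp x$, hence a full extra $\log x$, in the standard explicit treatments --- this is exactly why Dusart's conditional bound has $\frac{1}{4\pi}\sqrt{x}\log^2x$). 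Since the main term is only $2h/\eul(q)\approx\sqrt{x}\log x$, this tail swamps it for all $x$ beyond a tiny threshold: no constant $\alpha$ in front of $\log x$ is reachable this way. The missing idea is smoothing. The paper works with the second difference $\Delta_{2,h}\psi_\chi^{(1)}(x)=\psi_\chi^{(1)}(x+h)-2\psi_\chi^{(1)}(x)+\psi_\chi^{(1)}(x-h)$ of the \emph{integrated} function $\psi_\chi^{(1)}(x)=\sum_{n\le x}\chi(n)\Lambda(n)(x-n)$, i.e.\ it weights primes by the nonnegative triangle kernel $K(x-n;h)$. Then zeros enter through $x^{\rho+1}/(\rho(\rho+1))$, the sum converges absolutely, the zeros with $|\gamma|>T$ contribute only $O\bigl(x^{3/2}\log(qT)/T\bigr)$ (Lemma~\ref{lem:A4}), and the low zeros contribute $\frac{h\sqrt{x}}{2}\log(qT)(1+o(1))$ via $\int_0^\infty\frac{\sin^2t}{t^2}\,\d t=\frac{\pi}{2}$ (Lemma~\ref{lem:A5}); comparing with the main term $h^2/\eul(q)$ is what produces $\alpha=\tfrac12$ (or $1.253/2$ in the $q$-aspect).

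Two further ingredients of the actual proof that your sketch does not anticipate: the zero sums over all $\chi\bmod q$ are aggregated into sums over zeros of the Dedekind zeta function of $\Q(\zeta_q)$ (via $\zeta_{\Q(\zeta_q)}=\prod_\chi L(s,\chi^*)$), so that the authors' earlier explicit Dedekind-zeta estimates apply and the secondary terms shrink; and the finite verification is not a computation of low-lying zeros but a monotonicity scheme --- the key inequality is massaged into a form whose two sides are respectively increasing and decreasing in $x$ (and then in $q$), so that checking it at $x=x_0(q)$ and at a single $q_0$ suffices, with the residual small-$(q,x)$ ranges handled by directly exhibiting primes in progressions with the procedures \texttt{Check1} and \texttt{CheckSqrt}.
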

\begin{table}[H]
\caption{Parameters for Theorem~\ref{th:A1}}\label{tab:A1}
\centering
\begin{tabular}{|rrr|rr||rrr|rr|}
  \toprule
 \mltc{1}{|c}{$\alpha$}&\mltc{1}{c}{$\delta$}&\mltc{1}{c }{$\rho$}&\mltc{1}{ c}{$m$}&\mltc{1}{ c||}{$m'$}
&\mltc{1}{|c}{$\alpha$}&\mltc{1}{c}{$\delta$}&\mltc{1}{c }{$\rho$}&\mltc{1}{ c}{$m$}&\mltc{1}{ c|}{$m'$}\\
  \midrule
 1/2     & 1   & 12 &    23 &   46    &     1.253/2 & 0.1 &  7 &   500 & 1500\\
 1/2     & 1/2 &  9 &    86 &  188    &     1       & 0   &  8 &    23 &   46\\
 1/2     & 1/3 &  9 &  1500 & 3500    &     0.9     & 0   &  7 &    31 &   66\\
 1.253/2 & 1   & 14 &    18 &   34    &     0.8     & 0   &  6 &    52 &  120\\
 1.253/2 & 1/2 &  9 &    34 &   74    &     0.7     & 0   &  5 &   200 &  500\\
 1.253/2 & 0.2 &  7 &   110 &  260    &             &     &    &       &     \\
  \bottomrule
\end{tabular}
%
%
\end{table}

The claim of this theorem has the same qualitative behavior in its dependencies on $q$ as what is
predicted in~\eqref{eq:A0}, but the constants $m$ and $m'$ ruling the minimum $x$ are quite large. This
is the effect of the fact that under the hypotheses for the theorem the quotient $h/x$ is for sure
small in the long run for $x$, but this happens uniformly in $q$ only for very large values of $q$. In
fact, when $x$ has its lowest value we have
\[
\frac{h}{x_{\min}}
= \frac{\eul(q)(\alpha\log x_{\min} +\delta \log q + \rho)}{\sqrt{x_{\min}}}
= \frac{2\alpha+\delta + o(1)}{m+o(1)}.
\]
This considerably affects the computations, because they are more effective for smaller $h/x$, and
so we are forced to choose larger values of $m$ and $m'$. This also means that for small $q$ and some
limited range of $x$, extensive numerical tests have to be performed to complete the proof.
\smallskip

From the same general formulas we also deduce the following result.
\begin{theorem}\label{th:A2}
Assume GRH, $q\geq 3$, $(a,q)=1$,
\[
h \geq \eul(q)\Big(\frac{1}{2}\log(q^2x) + 15\Big)\sqrt{x}
\]
and $x \geq (8\eul(q)\log q\log\log q)^2$. Then there is a prime $p$ which is congruent to $a$ modulo $q$
with $|p-x|<h$. Furthermore, if we assume
\[
h \geq \eul(q)\Big(\frac{1}{2}\log(q^2x^3) + 15\Big)\sqrt{x}
\]
and $x \geq (15\eul(q)\log q\log\log q)^2$, then there are at least $\sqrt{x}$ such primes.
\end{theorem}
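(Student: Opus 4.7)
The plan is to obtain Theorem~\ref{th:A2} from the same master inequality that underlies Theorem~\ref{th:A1}, now instantiated with the parameter choice $\alpha=1/2$, $\delta=1$ (so that $\alpha\log x+\delta\log q=\tfrac12\log(q^2x)$) and the enlarged constant $\rho=100$. The extra slack bought by taking $\rho$ significantly larger than the $30$ appearing in Table~\ref{tab:A1} is exactly what allows one to replace a constant multiple of $\eul(q)\log q$ in the lower threshold for $x$ by the aesthetically pleasing $\eul(q)\log q\log\log q$, uniformly in $q\ge 3$.

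The analytic core is standard. By character orthogonality,
\[
\psi(x+h;q,a)-\psi(x;q,a)=\frac{1}{\eul(q)}\sum_{\chi\bmod q}\bar\chi(a)\bigl(\psi(x+h,\chi)-\psi(x,\chi)\bigr),
\]
and the truncated explicit formula under GRH bounds each $\psi(y,\chi)$ for non-principal $\chi$ in terms of a sum over zeros $\tfrac12+i\gamma$ of $L(s,\chi)$. After optimising the truncation height one arrives at an inequality of the form
\[
\Bigl|\psi(x+h;q,a)-\frac{h}{\eul(q)}\Bigr|\le\frac{\sqrt x}{\eul(q)}\bigl(\alpha\log x+\delta\log q+\rho+E(x,q)\bigr),
\]
where $E(x,q)$ collects the lower-order contributions and tends to $0$ as $x\to\infty$. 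Inserting $\alpha=1/2$, $\delta=1$, $\rho=100$ and verifying that $E(x,q)\le 0$ whenever $x\ge(15\,\eul(q)\log q\log\log q)^2$ forces $\psi(x+h;q,a)>\psi(x;q,a)$; since the contribution of proper prime powers to this difference is $O(\sqrt x)$ under GRH, the chosen $h$ leaves a genuine prime $p\equiv a\pmod q$ with $|p-x|<h$. The second assertion follows by the same route with an extra $\log x$ in the hypothesis on $h$: this upgrades positivity to a quantitative lower bound of order $\sqrt x\log x$ on the $\psi$-difference, yielding at least $\sqrt x$ primes once the $O(\sqrt x)$ prime-power contribution is subtracted and one divides by $\log x$; the recalibration forces the universal constant to increase from $15$ to $19$.

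The main obstacle is the small-$q$ regime. For $q=3$ one has $\log\log q\approx 0.09$, so $(15\,\eul(q)\log q\log\log q)^2$ is of order $10$, well below the range where the asymptotic bound on $E(x,q)$ actually becomes effective; the same difficulty persists for the first few dozen moduli. The argument will therefore have to be supplemented by direct numerical computation of $\pi(x;q,a)$ for $q$ and $x$ up to explicit cutoffs $Q_0$ and $X_0(q)$, exactly as alluded to in the discussion following Theorem~\ref{th:A1}. Calibrating these cutoffs against the analytic bound so that the two regimes meet, and verifying that the single choice $\rho=100$ really is large enough to keep both the analytic and numerical work tractable uniformly in $q\ge 3$, is the delicate part of the proof.
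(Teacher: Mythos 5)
Your overall architecture (explicit formula over zeros of the $L(s,\chi)$, positivity of a weighted prime count, numerical verification for small $q$) is the right one, but there are two substantive gaps. First, the analytic core as you describe it --- the truncated explicit formula applied to the first difference $\psi(x+h;q,a)-\psi(x;q,a)$ --- cannot reach the stated threshold $h\geq \eul(q)(\tfrac12\log(q^2x)+100)\sqrt{x}$. Bounding $\sum_{|\gamma|\leq T}|(x+h)^{\rho}-x^{\rho}|/|\rho|$ by $\min(2,|\gamma|h/x)$ and optimising $T$ produces an error of size $\sqrt{x}\log^2(qx)$, i.e.\ Dusart's conditional bound quoted in Section~\ref{sec:A1}, which only yields intervals of length $\eul(q)\sqrt{x}\log^2x$. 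The single power of $\log$ in the theorem comes from the second difference $\Delta_{2,h}\psi^{(1)}_\chi(x)$, equivalently the Fej\'er kernel $K(x-n;h)=\max(h-|x-n|,0)$: the zero sum then involves $\sum_{|\gamma|\leq T}\sin^2(\gamma h/2x)/\gamma^2$, which converges and contributes only one logarithm (Lemmas~\ref{lem:A5} and~\ref{lem:A6} and the partial summation leading to \eqref{eq:A22}). Without this smoothing your positivity step fails for the stated $h$. (Also, your intermediate inequality has the error written as $\tfrac{\sqrt{x}}{\eul(q)}(\cdots)$; no cancellation over the $\eul(q)$ characters is available, so the provable error is $\sqrt{x}(\cdots)$, to be compared with the main term $h/\eul(q)$ --- the bookkeeping as written is off by a factor $\eul(q)$.)

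Second, the mechanism producing the $(15\eul(q)\log q\log\log q)^2$ threshold is not the enlarged $\rho$. In the paper it comes from choosing the truncation parameter $\beta=\tfrac1\pi\log(q^2x)$ (rather than $\beta=\ell\log(\sqrt{x}/(\eul(q)\log q))$ as for Theorem~\ref{th:A1}), which makes $\tfrac{2}{\pi\beta}\log(qT)\leq 1$ and, via $T\leq\tfrac{2}{\pi}\tfrac{\sqrt x}{\eul(q)}$ and \eqref{eq:A46}, cancels a full $\log q$ against $\log\eul(q)+\sum_{p\mid q}\tfrac{\log p}{p-1}$. The residual quantity $G(q,x_0)$ contains $\tfrac1\pi\log(q^2\sqrt{x_0}/\eul(q))\log(\sqrt{x_0}/\eul(q))\log(q\sqrt{x_0})\,\eul(q)/\sqrt{x_0}$, and it is precisely the factor $\log(\sqrt{x_0}/\eul(q))\approx\log\log q$ in the numerator that forces $\sqrt{x_0}=m\,\eul(q)\log q\log\log q$ with $4/(\pi m)<0.747$; a threshold of the form $(m\eul(q)\log q)^2$ would leave an unbounded $\log\log q$ no matter how large $\rho$ is. Note also that this is a \emph{worsening} of the $q$-dependence traded for the smaller constant $15$ (versus $70$ in Theorem~\ref{th:A1}), not an improvement bought by slack. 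Finally, beyond the small-$q$ computations you mention, one must also prove monotonicity in $q$ of the majorants $\tilde F_0,\tilde G_0$ to propagate the inequality from a single $q_0$ to all larger $q$; the computations show $m=15$ is essentially forced, since smaller $m$ yields astronomically large $q_0(m)$.
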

Theorem~\ref{th:A2} is worse in its dependency of the minimum $x$ on $q$, but the constants are better. As
a consequence, its claims improve on the case $\alpha=1/2$, $\delta=1$ in Theorem~\ref{th:A1} for all
$q\leq \exp(\exp(23/8))\simeq 5\cdot 10^7$ (resp. $q\leq \exp(\exp(46/15))\simeq 2\cdot 10^9$).

Both theorems could be adapted to include the cases $q=1$ and $q=2$, but for them we have already proved
a better result in~\cite{DudekGrenieMolteni1} where the conclusions are proved with $h = \tfrac{1}{2}\log
x + 2$ for any $x\geq 2$.

The conclusions improve significantly if, following Dusart, we select a lower bound for $x$ of
exponential type in terms of $q$. In fact, the same formulas producing Theorems~\ref{th:A1}
and~\ref{th:A2} allow us to prove the following result.
\begin{theorem}\label{th:A3}
Assume GRH, $(a,q)=1$ and $x\geq \exp(q)$. Let
\[
h \geq \frac{\eul(q)}{2}\log(q^2x)\sqrt{x}.
\]
Then for each $q\geq 35$ there is a prime $p$ which is congruent to $a$ modulo $q$ with
$|p-x|<h$. Furthermore, assuming
\[
h \geq \frac{\eul(q)}{2}\log(q^2x^3)\sqrt{x}
\]
and $q\geq 67$, there are at least $\sqrt{x}$ such primes.
\end{theorem}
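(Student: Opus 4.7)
The plan is to apply the same explicit master inequality that underpins Theorems~\ref{th:A1} and~\ref{th:A2} and re-optimise its parameters in the regime $x\geq\exp(q)$. Under GRH, a smoothed truncated explicit formula for $\psi(x;q,a)$ yields a bound of the shape
\[
\psi(x+h;q,a)-\psi(x;q,a) \geq \frac{h}{\eul(q)} - \frac{\sqrt{x}}{\eul(q)}\,G(x,q,h),
\]
and the proofs of the earlier theorems consist in bounding the explicit function $G(x,q,h)$ by $\alpha\log x+\delta\log q+\rho$ under the polynomial hypothesis $x\geq(m\eul(q)\log q)^2$. For Theorem~\ref{th:A3} the desired parameters are $(\alpha,\delta,\rho)=(1/2,1,0)$, since $\tfrac{1}{2}\log(q^2x)=\tfrac{1}{2}\log x+\log q$, and $(\alpha,\delta,\rho)=(3/2,1,0)$ for the $\sqrt{x}$-count version, the additional $\log x$ being the usual cost of translating the $\psi$-estimate into a lower bound on $\pi(x+h;q,a)-\pi(x;q,a)$.

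None of the entries in Table~\ref{tab:A1} admits $\rho=0$; the improvement comes from the fact that the hypothesis $x\geq\exp(q)$ is far stronger than any polynomial condition $x\geq(m\eul(q)\log q)^2$. I would therefore rewrite the master bound in the form $G\leq\tfrac{1}{2}\log x+\log q+R(x,q)$, where $R$ collects the bounded contributions from low-lying zeros, smoothing errors, and ancillary constants; a careful application of the Riemann--von Mangoldt zero count for Dirichlet $L$-functions under GRH shows $R(x,q)\leq R_0$ for an explicit constant $R_0$ as soon as $x\geq\exp(q)$. Because $\log x\geq q$ in that regime, this $R_0$ can be absorbed into the leading $\tfrac12\log x$ at the modest cost of replacing $\tfrac12$ by $\tfrac12-\varepsilon$, yielding the required inequality for all $q\geq R_0/\varepsilon$. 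Optimising $\varepsilon$ and the auxiliary smoothing parameters so that this threshold falls below $35$ (respectively below $67$ for the count version) then completes the proof.

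The main obstacle, and the only real new work beyond what is already in the proofs of Theorems~\ref{th:A1} and~\ref{th:A2}, is precisely this sharp accounting of $R(x,q)$: each of the several explicit constants entering $G$ must be controlled tightly enough to bring the $q$-threshold down to $35$ and $67$. By contrast, the computational verification of ``small~$x$'' cases that was needed to close the proofs of the earlier theorems is dispensed with here, since the hypothesis $x\geq\exp(q)$ places us automatically deep in the asymptotic range where the analytic estimates are effective.
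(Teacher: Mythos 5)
Your overall architecture is right: the paper does indeed reuse the master inequality \eqref{eq:A23} (in the form $(1-F)\rho\geq G$, resp.\ $G_s$, set up for Theorem~\ref{th:A2} with $\alpha=1/2$, $\delta=1$), notes that $F$ and $G$ decrease in $x$, and simply evaluates at $x=e^q$: it checks $F(q,e^q)<1$ and $G(q,e^q)\leq 0$ for $q\geq 220$ (and $G_s(q,e^q)\leq 0$ for $q\geq 500$), then lowers the thresholds to $35$ and $67$ by keeping the exact value of the bounded block \eqref{eq:A47} instead of the crude constant $\mathcal{E}(q)$. You are also right that no small-$x$ computation is needed and that the real work is tight constant accounting.

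However, the mechanism you give for why $\rho=0$ is attainable is wrong, and it is the heart of the matter. You claim the bounded remainder $R_0$ can be ``absorbed into the leading $\tfrac12\log x$ at the cost of replacing $\tfrac12$ by $\tfrac12-\varepsilon$,'' so that $\log x\geq q$ yields the inequality for $q\geq R_0/\varepsilon$. But the loss term coming from the zeros has leading coefficient exactly $\tfrac12$ in $\log x$: with the forced choice $T\asymp\sqrt{x}/\eul(q)$ (one cannot take $T$ smaller without the tail $\Sigma_{\chi^*,2}$ overwhelming $h^2/\eul(q)$), the dominant contribution is $\log(qT)=\tfrac12\log x+\log q-\log\eul(q)+O(1)$, and no $\varepsilon\log x$ can be saved from it. With your bookkeeping the inequality never closes for any $q$. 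The surplus that actually makes $G(q,e^q)\leq 0$ possible is of size $0.747\log q$, not $\varepsilon\log x$: it comes from the cancellation $-\sum_{p\mid q}\tfrac{\log p}{p-1}-\log\eul(q)\leq-\log q$ recorded in \eqref{eq:A46}, which offsets the $+\log q$ supplied by $\delta=1$ except for the genuinely needed $0.253\log q$. The hypothesis $x\geq e^q$ plays only the subsidiary role of killing the $\eul(q)\log^2x/\sqrt{x}$-type error terms; it is not the source of the surplus, and consequently the thresholds $q\geq 35$ and $q\geq 67$ arise from an inequality of the shape $0.747\log q\geq(\text{explicit constant})$ rather than from $q\geq R_0/\varepsilon$. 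Without identifying this $\log\eul(q)$ saving, your proof cannot be completed.
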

This claim is always stronger than what we deduce from Dusart's result, apart from the larger minimum value
for $q$.
\medskip

Note that Theorem~\ref{th:A1} (case $\alpha=1/2$, $\delta=1$) shows that the least prime congruent to
$a$ modulo $q$ is lower than
\[
(24^2+o(1))(\eul(q)\log q)^2
\]
%
%
where $o(1)$ is explicit. According to computations in Section~\ref{sec:A5} (see Table~\ref{tab:A3}), the
constant reduces to $21^2+2\cdot 21$ for extremely large values of $q$ but this is notably weaker than the
bound $(\eul(q)\log q)^2$ which has been proved by Lamzouri, Li and
Soundararajan~\cite[Cor.~1.2]{LamzouriLiSoundararajan} for all $q\geq 4$.
\medskip

Also, from Theorem~\ref{th:A1}, one deduces the following explicit version of a quasi-Dirichlet's conjecture
for primes close to squares of integers.
\begin{corollary}\label{cor:A1}
Assume GRH and let $q\geq 1$ and $n\geq 8\eul(q)\log q$. Then the interval
\[
\big(n^2, (n+\eul(q)(12+2\log(qn)))^2\big)
\]
contains a prime which is congruent to $a$ modulo $q$, for every $a$ coprime to $q$.
\end{corollary}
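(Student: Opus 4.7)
The plan is to apply Theorem~\ref{th:A1} with the first row of parameters of Table~\ref{tab:A1}, namely $\alpha=1/2$, $\delta=1$, $\rho=30$, $m=70$. Setting $C:=\eul(q)(30+2\log(qn))$, I would choose
\[
x := \frac{n^2+(n+C)^2}{2}, \qquad h := \frac{(n+C)^2-n^2}{2} = nC+\frac{C^2}{2},
\]
so that $(x-h,x+h)$ coincides exactly with the interval $(n^2,(n+C)^2)$ in the statement.

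Two hypotheses of Theorem~\ref{th:A1} must then be verified. The lower bound $x\geq (70\eul(q)\log q)^2$ is immediate, since $x\geq n^2$ and $n\geq 70\eul(q)\log q$ by assumption. The substantive condition is
\[
h \;\geq\; \eul(q)\Big(\tfrac12\log x + \log q + 30\Big)\sqrt x.
\]
The crude bound $\sqrt x\leq n+C$ turns out to be too lossy here, so I would instead set $M:=n+C/2$ and exploit $x=M^2+C^2/4$ to get the sharp estimates $\sqrt x\leq M(1+v^2/2)$ and $\tfrac12\log x\leq \log M+v^2/2$, where $v:=C/(2M)$. Since $h=CM$, dividing by $\eul(q)M$ reduces the requirement to
\[
30+2\log(qn) \;\geq\; \bigl(\log(qM)+30+v^2/2\bigr)\bigl(1+v^2/2\bigr),
\]
which via $\log M=\log n+\log(1+u/2)$ (with $u:=C/n$) becomes a purely elementary comparison. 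The hypothesis $n\geq 70\eul(q)\log q$ bounds $u$ uniformly for $q\geq 3$, and the surplus $\log(qn)$ on the left---produced by the factor $2$ in front of $\log(qn)$ in the definition of $C$---dominates the remaining error terms, the tightest case being $q=3$ with $n$ near the lower endpoint $140\log 3\approx 154$.

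Once both hypotheses are checked, Theorem~\ref{th:A1} furnishes a prime $p\equiv a\pmod q$ with $|p-x|<h$, and by construction $p\in(n^2,(n+C)^2)$, which is the assertion. The small cases $q=1,2$ fall outside the range $q\geq 3$ of Theorem~\ref{th:A1}, but they follow from the much stronger short-interval bound of~\cite{DudekGrenieMolteni1} already recalled in the introduction. The principal obstacle is the quantitative estimate above: the argument only closes with the sharper expansions around the centre $M^2$, and the slack provided by the generous $\rho=30$ together with the factor $2$ multiplying $\log(qn)$ in the definition of $C$ is precisely what makes the inequality pass through.
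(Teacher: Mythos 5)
Your proposal is correct and follows essentially the same route as the paper: both apply Theorem~\ref{th:A1} with $\alpha=1/2$, $\delta=1$, $\rho=30$, $m=70$, take $x$ to be the average of $n^2$ and $(n+\eul(q)(30+2\log(qn)))^2$ and $h$ half their difference (so the lower bound $x\geq(70\eul(q)\log q)^2$ is automatic), reduce the hypothesis on $h$ to an elementary scalar inequality, and close it by monotonicity in $n$ with the extremal case at the minimal $n$ (the paper phrases the reduction via $B=\eul(q)A/n$ and $H=\sqrt{(4+4B+2B^2)/(4+4B+B^2)}$ rather than your $v=C/(2M)$, but this is only a cosmetic difference, and your bound does hold with a wide margin at $q=3$, $n\approx154$). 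The treatment of $q=1,2$ via~\cite{DudekGrenieMolteni1} also matches the paper.
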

Similar corollaries may be deduced from Theorems~\ref{th:A2} and~\ref{th:A3}.

\section{Functional equation and integral representation}
\label{sec:A2}
Let $\chi$ be a character modulo $q$; let $\chi^*$ be the primitive character inducing $\chi$ and let
$q_\chi$ be its conductor. Let $a_\chi:=(1-\chi(-1))/2$ denote the parity of $\chi$, so that
\[
L(s,\chi) = L(s,\chi^*)\prod_{p|q}\big(1-\chi^*(p)p^{-s}\big)
\]
and
\[
\xi(1-s,\chi^*) = \frac{\tau(\chi^*)}{i^{a_\chi}\sqrt{q_\chi}} \xi(s,\overline{\chi^*})
\]
where we have that
\[
\xi(s,\chi^*) := s(s-1)\Big(\frac{q_\chi}{\pi}\Big)^{\frac{s+a_\chi}{2}}\Gamma\Big(\frac{s+a_\chi}{2}\Big)L(s,\chi^*).
\]
We also let
\begin{equation}\label{eq:A1}
\psi_\chi^{(1)}(x):= \int_0^x \psi_\chi(u)\d u
                   = \sum_{n\leq x} \chi(n)\Lambda(n)(x-n)
\end{equation}
and recall the integral representation
\begin{equation}\label{eq:A2}
\psi_\chi^{(1)}(x)
=  -\frac{1}{2\pi i}\int_{2-i\infty}^{2+i\infty} \frac{L'}{L}(s,\chi) \frac{x^{s+1}}{s(s+1)}\d s
\end{equation}
which holds for all $x \geq 1$. The next lemma gives an alternative
formula for $\psi_{\chi^*}^{(1)}(x)$ based on the representation~\eqref{eq:A2} applied to the
character $\chi^*$.
\begin{lemma}\label{lem:A1}
We have that
\begin{equation}\label{eq:A3}
\psi_{\chi^*}^{(1)}(x)
=  \frac{x^2}{2}\delta_{\chi^*=\tc}
 - \sum_{\rho\in Z_{\chi^*}} \frac{x^{\rho+1}}{\rho(\rho+1)}
 - x r_{\chi^*}
 + r'_{\chi^*}
 + R_{\chi^*}^{(1)}(x)
\end{equation}
where $Z_{\chi^*}$ is the set of nontrivial zeros of $L(s,{\chi^*})$, and $r_{\chi^*}$, $r'_{\chi^*}$ are
the constants
\begin{align*}
r_{\chi^*}
&= \frac{L'}{L}(0,\chi^*)(a_\chi\delta_{\chi^*\neq\tc} + \delta_{\chi^*=\tc}) - (1 + \beta)(1-a_\chi)\delta_{\chi^*\neq \tc},\\
r'_{\chi^*}
&= (1 - \alpha)a_\chi + \frac{L'}{L}(-1,\chi^*)(1-a_\chi),
\intertext{with $\alpha$ and $\beta \in\C$ defined in the proof,}
R_{\chi^*}^{(1)}(x)
&= - \sum_{n=1}^\infty \frac{x^{1-2n-a_\chi}}{(2n+a_\chi)(2n+a_\chi-1)}
  + a_\chi\log x
  - (1-a_\chi)\delta_{\chi^*\neq \tc}\,x\log x,
\end{align*}
$\delta_{\chi^*=\tc}$ is $1$ when $\chi^*=\tc$ and $0$ otherwise, and $\delta_{\chi^*\neq\tc}=1-\delta_{\chi^*=\tc}$.
\end{lemma}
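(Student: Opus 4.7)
The plan is to start from the integral representation~\eqref{eq:A2} applied to the primitive character $\chi^*$ and shift the contour leftward from $\Re(s)=2$ to $\Re(s)=-T$ along a sequence $T\to\infty$ chosen to avoid zeros of $L(\cdot,\chi^*)$. Under GRH the series $\sum_\rho x^{\rho+1}/(\rho(\rho+1))$ converges absolutely because $|\rho(\rho+1)|\gg|\gamma|^2$ and $\sum_\rho 1/|\gamma|^2<\infty$, so the residue theorem together with the standard vanishing of the horizontal and left vertical contributions (obtained via the functional equation, Stirling's formula applied to the completed $\xi(1-s,\chi^*)$, and the Hadamard partial-fraction expansion for $L'/L$) identifies $\psi_{\chi^*}^{(1)}(x)$ with the sum of residues of the integrand $f(s):=-\frac{L'}{L}(s,\chi^*)\frac{x^{s+1}}{s(s+1)}$ inside the contour.

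The residues are then enumerated. The pole at $s=1$ exists only when $\chi^*=\tc$ (so that $L(s,\chi^*)=\zeta(s)$) and contributes $\frac{x^2}{2}\delta_{\chi^*=\tc}$. Each simple nontrivial zero $\rho\in Z_{\chi^*}$ contributes $-x^{\rho+1}/(\rho(\rho+1))$. The trivial zeros at $s=-2n-a_\chi$ with $n\ge 1$ are simple and give precisely the series $-\sum_{n\ge 1}x^{1-2n-a_\chi}/((2n+a_\chi)(2n+a_\chi-1))$ appearing in $R_{\chi^*}^{(1)}(x)$. The substantive work concentrates on the behaviour at $s=0$ and $s=-1$, which depends on the parity $a_\chi$ and on whether $\chi^*$ is principal.

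For $\chi^*=\tc$ the points $s=0$ and $s=-1$ are regular for $L'/L$, so the residues reduce to $-x\frac{L'}{L}(0,\chi^*)$ and $\frac{L'}{L}(-1,\chi^*)$, matching $-xr_{\chi^*}$ and $r'_{\chi^*}$ once one reads off the characteristic-function coefficients in the definitions. For $a_\chi=1$ (odd primitive $\chi^*$), $s=-1$ is a simple trivial zero: writing $\frac{L'}{L}(s,\chi^*)=(s+1)^{-1}+\alpha+O(s+1)$ defines $\alpha$, and expanding $\frac{x^{s+1}}{s(s+1)}$ at $s=-1$ shows that the resulting double-pole residue equals the contribution $r'_{\chi^*}=1-\alpha$ together with the extra $\log x$ term which is absorbed into $R_{\chi^*}^{(1)}(x)$; at $s=0$ the residue is simply $-x\frac{L'}{L}(0,\chi^*)=-xr_{\chi^*}$. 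Symmetrically, for $a_\chi=0$ with $\chi^*\ne\tc$ the trivial zero sits at $s=0$: writing $\frac{L'}{L}(s,\chi^*)=s^{-1}+\beta+O(s)$ defines $\beta$, and the double-pole residue at $s=0$ yields $-xr_{\chi^*}$ together with the extra $-x\log x$ term routed into $R_{\chi^*}^{(1)}(x)$, while the residue at $s=-1$ is just $\frac{L'}{L}(-1,\chi^*)=r'_{\chi^*}$. Collecting all residues produces~\eqref{eq:A3}. The main obstacle is not any individual estimate but the careful case-by-case bookkeeping at $s=0$ and $s=-1$: whether the pole of $f$ comes from $\frac{1}{s(s+1)}$ alone, from the trivial zero of $L$ alone, or from both simultaneously, determines whether a logarithmic term in $x$ survives and must be placed into $R_{\chi^*}^{(1)}(x)$ rather than into $r_{\chi^*}$ or $r'_{\chi^*}$.
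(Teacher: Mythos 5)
Your proposal takes essentially the same route as the paper: a contour shift applied to the integral representation \eqref{eq:A2} for $\chi^*$, followed by the same case-by-case residue bookkeeping at $s=1$, at the nontrivial zeros, at the trivial zeros $s=-2n-a_\chi$ with $n\geq 1$, and at the delicate points $s=0$ and $s=-1$; you in fact say more than the paper does about justifying the shift itself. The one genuine discrepancy is a sign convention: you define $\alpha$ and $\beta$ as the constant terms in the Laurent expansion of $+\frac{L'}{L}$, i.e.\ $\frac{L'}{L}(-1+\epsilon,\chi^*)=\epsilon^{-1}+\alpha+O(\epsilon)$, whereas the formulas for $r_{\chi^*}$ and $r'_{\chi^*}$ fixed in the statement require the paper's convention $-\frac{L'}{L}(-1+\epsilon,\chi^*)=-\epsilon^{-1}+\alpha+O(\epsilon)$ (and likewise for $\beta$ at $s=0$). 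With your definition a direct expansion of $-\frac{L'}{L}(s,\chi^*)\frac{x^{s+1}}{s(s+1)}$ gives residue $1+\alpha+\log x$ at $s=-1$ in the odd case and $x(1-\beta-\log x)$ at $s=0$ in the even nonprincipal case, not $1-\alpha+\log x$ and $x(1+\beta-\log x)$ as you assert; replacing your $\alpha,\beta$ by $-\alpha,-\beta$ repairs this and the rest of the argument stands.
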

\begin{proof}
The poles of $-\frac{L'}{L}(s,\chi)\frac{x^{s+1}}{s(s+1)}$ at the trivial zeros $s=-2n-a_\chi$ are simple
for every integer $n\geq 1$. Moreover, when $a_\chi= 1$, the pole at $s=0$ is simple and its
contribution to $R_{\chi^*}^{(1)}(x)$ is $-\frac{L'}{L}(0,\chi^*)x$, while the one in $s=-a_\chi=-1$ (i.e.
$n=0$) is a double pole with contribution
\[
1 - \alpha + \log x,
\]
where $-\frac{L'}{L}(-1+\epsilon,\chi^*) =: -\frac{1}{\epsilon} + \alpha + O(\epsilon)$.\\
Lastly, when $a_\chi= 0$, the pole at $s=-1$ is simple and its contribution to $R_{\chi^*}^{(1)}(x)$ is
$\frac{L'}{L}(-1,\chi^*)$, while the one in $s=-a_\chi=0$ (i.e. $n=0$) is double and its contribution is
\[
x(1 + \beta - \log x),
\]
where $-\frac{L'}{L}(\epsilon,\chi^*) =: -\frac{1}{\epsilon} + \beta + O(\epsilon)$ when $\chi^*$ is not
trivial, and is simple with contribution equal to $-\frac{L'}{L}(0,\chi^*)x$ when $\chi^*$ is trivial.
\end{proof}

%
%
%
%
%

\section{General setting and partial results}
\label{sec:A3}
Let $q\in\N$ and $a\in\Z$ with $(a,q)=1$. For any sequence $f=\{f_\chi\}$ of objects depending on the
character $\chi$ modulo $q$ let
\[
M_{a,q}f_\chi := \frac{1}{\eul(q)}\sum_{\chi\in\widehat{(\Z/q\Z)^*}} \overline{\chi}(a) f_\chi.
\]
The operator $M_{a,q}$ selects for the integers which are congruent to $a$ modulo $q$. Notice that
$M_{a,q}$ is akin to the mean value, since if $|f_\chi|\leq M$ for every character, then
$|M_{a,q}f_\chi|\leq M$.\\
Moreover, for any function $f\colon \R\to\C$ we let
\[
\Delta_{2,h}f := f(x+h)-2f(x)+f(x-h).
\]
The operator $\Delta_{2,h}$ will select the integers which are in the interval $(x-h,x+h)$.\\
Notably, the operators $M_{a,q}$ and $\Delta_{2,h}$ commute and
\[
M_{a,q}\Delta_{2,h}\psi_\chi^{(1)}(x)
= \sum_{n=a\Pmod{q}} \Lambda(n)K(x-n;h)
\]
where $K(u;h):=\max\{h-|u|,0\}$, so that it is supported in $|u|\leq h$, is positive in the open set, and
has a unique maximum at $u=0$ with $K(0;h)=h$. The theorem follows from this basic equality by estimating,
in the standard way, the function appearing on the left hand side. Since Lemma~\ref{lem:A1} is valid only
for $\chi^*$, we firstly need to connect $\psi_{\chi}^{(1)}$ with $\psi_{\chi^*}^{(1)}$. To this end, we let
\[
B(\chi,x) := \psi_\chi^{(1)}(x) - \psi_{\chi^*}^{(1)}(x)
\]
and prove the following lemma.
\begin{lemma}\label{lem:A2}
Assume $x\geq 1$ and $0< h\leq x$. Then
\[
|M_{a,q}\Delta_{2,h} B(\chi,x)| \leq \omega(q)h\log(2x).
\]
\end{lemma}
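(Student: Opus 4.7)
The plan is to reduce $B(\chi,x)$ to a short finite sum supported on prime powers of primes dividing $q$, apply $\Delta_{2,h}$ to turn each truncated ramp $(x-p^k)_+$ into a copy of the hat kernel $K(\,\cdot\,;h)$, and then bound the result crudely using $|\overline{\chi}(a)|\le 1$, $|\chi^*(p)|\le 1$ and $K(\,\cdot\,;h)\le h$.

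First, since $\chi(n)=\chi^*(n)$ whenever $(n,q)=1$ while $\chi(n)=0$ otherwise, the series representation~\eqref{eq:A1} yields
\[
B(\chi,x)
= -\sum_{\substack{n\le x\\(n,q)>1}}\chi^*(n)\Lambda(n)(x-n)
= -\sum_{p\mid q}\log p\sum_{\substack{k\ge 1\\ p^k\le x}}\chi^*(p)^k(x-p^k),
\]
since $\Lambda(n)\ne 0$ forces $n=p^k$, and $(p^k,q)>1$ is equivalent to $p\mid q$. Writing $(x-p^k)\mathbf{1}_{p^k\le x}=(x-p^k)_+$ and using the elementary identity $(y+h)_+-2y_++(y-h)_+=K(y;h)$, valid for every real $y$, the second-difference operator then gives
\[
\Delta_{2,h}B(\chi,x)
= -\sum_{p\mid q}\log p\sum_{k\ge 1}\chi^*(p)^k\,K(x-p^k;h),
\]
where the $k$-sum is now automatically truncated to those $k$ with $p^k\in[x-h,x+h]$.

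Next I apply $M_{a,q}$ and the triangle inequality. Since $|\overline{\chi}(a)\chi^*(p)^k|\le 1$ uniformly and $K(\,\cdot\,;h)\le h$,
\[
|M_{a,q}\Delta_{2,h}B(\chi,x)|
\le h\sum_{p\mid q}N_p\log p,
\qquad
N_p:=\#\{k\ge 1:\,p^k\in[x-h,x+h]\}.
\]
For every $p\mid q$, each counted $p^k$ satisfies $p^k\le x+h\le 2x$ since $h\le x$, so $N_p\le\log(2x)/\log p$, giving $N_p\log p\le\log(2x)$. Summing over the $\omega(q)$ primes dividing $q$ produces the stated bound.

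I do not anticipate a serious obstacle: all three steps amount to bookkeeping plus the crude inequalities above. The only place where a little care is required is the reduction to a single kernel $K$ in the second step; this works because the positive-part formulation lets the three shifted sums hidden in $\Delta_{2,h}$ be treated uniformly, without the cutoff $p^k\le x$ changing between them.
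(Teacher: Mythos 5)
Your proposal is correct and follows essentially the same route as the paper: reduce $B(\chi,x)$ to a sum over prime powers of primes dividing $q$, observe that $\Delta_{2,h}$ applied to the truncated ramp produces the kernel $K(\cdot\,;h)$, and bound trivially by $K\le h$ times the number of admissible $k$, which is at most $\log(x+h)/\log p\le\log(2x)/\log p$. The only cosmetic difference is that the paper retains the condition $(n,q_\chi)=1$ and discards it at the end, whereas you drop it immediately since $\chi^*$ vanishes there; the estimates are identical.
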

\begin{proof}
We will prove that
\begin{equation}\label{eq:A4}
|\Delta_{2,h} B(\chi,x)| \leq \omega(q)h\log(2x),
\end{equation}
and the claim will immediately follow by the mean value property of $M_{a,q}$. By~\eqref{eq:A1} we have that
\[
B(\chi,x) = \sum_{n\leq x} (\chi(n)-\chi^*(n))\Lambda(n)(x-n).
\]
Thus, only those integers that are coprime to $q_\chi$ and not $q$ will be counted, giving
\[
B(\chi,x) = -\sum_{\substack{n\leq x\\ (n,q)>1 \\ (n,q_\chi)=1}} \chi^*(n)\Lambda(n)(x-n).
\]
It follows that
\[
\Delta_{2,h}B(\chi,x) = -\sum_{\substack{(n,q)>1 \\ (n,q_\chi)=1}} \chi^*(n)\Lambda(n)K(x-n;h),
\]
and therefore
\[
|\Delta_{2,h}B(\chi,x)| \leq \sum_{\substack{(n,q)>1 \\ (n,q_\chi)=1}} \Lambda(n)K(x-n;h).
\]
Recalling the definition of $\Lambda$ and removing the restriction $(n,q_\chi)=1$, we get
\begin{equation}\label{eq:A5}
|\Delta_{2,h}B(\chi,x)|
\leq \sum_{p|q} \log p \sum_{k\geq 1} K(x-p^k;h).
\end{equation}
The inner sum is trivially bounded by
\[
h\sum_{\substack{p^k< x+h\\1\leq k}} 1
\leq h \intpart{\frac{\log(x+h)}{\log p}}.
\]
Finally,~\eqref{eq:A5} gives
\[
|\Delta_{2,h}B(\chi,x)|
\leq h\sum_{p|q} \log(x+h)
= \omega(q)h\log(x+h)
\]
which is~\eqref{eq:A4} under the restriction $0<h\leq x$.
\end{proof}

\begin{lemma}\label{lem:A3}
Assume $q\geq 3$, $x\geq 100$ and $0 < h\leq \frac{5}{6}x$. Then
\begin{multline*}
M_{a,q}\Delta_{2,h}\psi_{\chi^*}^{(1)}(x)
= \frac{h^2}{\eul(q)}\\
  + \theta\Big[
           \Big|M_{a,q}\Delta_{2,h}\sum_{\rho\in Z_{\chi^*}} \frac{x^{\rho+1}}{\rho(\rho+1)}\Big|
           + 1.7\Big|\delta_{\pm 1[q]}(a) - \frac{2}{\eul(q)}\Big|\frac{h^2}{x}
           + \delta_{\pm 1[q]}(a)\frac{6h^2}{x(x-1)}
          \Big]
\end{multline*}
%
for some $\theta=\theta(a,q,x,h)\in [-1,1]$, where $\delta_{\pm 1[q]}(a)=1$ when $a=\pm 1\pmod{q}$ and
$0$ otherwise.
\end{lemma}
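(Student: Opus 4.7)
The plan is to apply Lemma~\ref{lem:A1} to $\psi_{\chi^*}^{(1)}(x)$ and then act by $\Delta_{2,h}$ and $M_{a,q}$ term by term. Because $\Delta_{2,h}$ annihilates every polynomial of degree at most one, the contributions of $-xr_{\chi^*}$ and $r'_{\chi^*}$ drop out, while $\Delta_{2,h}(x^2/2)=h^2$. The factor $\delta_{\chi^*=\tc}$ selects the principal character modulo $q$, so $M_{a,q}(\delta_{\chi^*=\tc})=1/\eul(q)$, producing the main term $h^2/\eul(q)$. The sum over the nontrivial zeros reappears verbatim as the first bracketed quantity, so everything reduces to bounding $M_{a,q}\Delta_{2,h} R_{\chi^*}^{(1)}(x)$.

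Orthogonality of Dirichlet characters gives
\[
M_{a,q}(a_\chi)=\tfrac12\bigl(\delta_{a\equiv 1[q]}-\delta_{a\equiv -1[q]}\bigr),\qquad
M_{a,q}\bigl((1-a_\chi)\delta_{\chi^*\neq\tc}\bigr)=\tfrac12\bigl(\delta_{\pm 1[q]}(a)-2/\eul(q)\bigr),
\]
which is exactly why $\delta_{\pm 1[q]}(a)-2/\eul(q)$ appears in the statement, and the absolute value of the first is at most $\tfrac12\delta_{\pm 1[q]}(a)$ for $q\geq 3$. I would then split the series in $R_{\chi^*}^{(1)}$ by parity as $-(1-a_\chi)S^+(x)-a_\chi S^-(x)$ with $S^+(x)=\sum_{n\ge1}x^{1-2n}/((2n)(2n-1))$ and $S^-(x)=\sum_{n\ge1}x^{-2n}/((2n+1)(2n))$, so that after $M_{a,q}$ all three remaining pieces of $R_{\chi^*}^{(1)}$ carry an overall factor bounded in absolute value by $\tfrac12\delta_{\pm 1[q]}(a)$.

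For the analytic estimates I would use the integral representation $\Delta_{2,h}f(x)=\int_{-h}^{h}(h-|t|)f''(x+t)\d t$. Applied to $f(x)=x\log x$ with $f''(x)=1/x$ and $h\le 5x/6$, this gives $|\Delta_{2,h}(x\log x)|\le\tfrac{36}{11}\,h^2/x$; multiplying by $\tfrac12|\delta_{\pm 1[q]}(a)-2/\eul(q)|$ yields $\tfrac{18}{11}<1.7$ times the claimed quantity, which is the first error term. Applied to $\log x$ it gives $|\Delta_{2,h}(\log x)|=|\log(1-h^2/x^2)|\le h^2/(x^2-h^2)$, and applied termwise to $S^{\pm}$ the bound $|\Delta_{2,h}(x^{-k})|\le h^2k(k+1)/(x-h)^{k+2}$ makes the factor $(2n+a_\chi)(2n+a_\chi-1)$ cancel exactly, leaving a geometric series in $1/(x-h)^2$; summing gives $|\Delta_{2,h}S^+(x)|\le h^2/[(x-h)((x-h)^2-1)]$ and $|\Delta_{2,h}S^-(x)|\le h^2/[(x-h)^2((x-h)^2-1)]$.

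Putting the three pieces together, the residual contribution to $|M_{a,q}\Delta_{2,h}R_{\chi^*}^{(1)}|$ is at most
\[
\frac{\delta_{\pm 1[q]}(a)}{2}\left(\frac{h^2}{x^2-h^2}+\frac{h^2}{(x-h)((x-h)^2-1)}+\frac{h^2}{(x-h)^2((x-h)^2-1)}\right),
\]
and the main obstacle becomes a purely numerical verification: under $x\ge 100$ and $0<h\le 5x/6$, each of the three terms, scaled by $x(x-1)/h^2$, is uniformly bounded (the first by $36/11$ using $1-(h/x)^2\ge 11/36$; the other two by elementary monotonicity in $v=x-h$, which shrinks $v(v^2-1)$ and $v^2(v^2-1)$ and so reduces the check to the extremal corner $x=100$, $h=5x/6$). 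Summing the three bounds gives a quantity comfortably below $12$, so after the factor $\tfrac12$ the residual is $\le 6\,\delta_{\pm 1[q]}(a)\,h^2/(x(x-1))$, as required.
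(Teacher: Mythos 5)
Your proposal is correct and follows essentially the same route as the paper: apply Lemma~\ref{lem:A1}, kill the linear terms with $\Delta_{2,h}$, compute the character averages $M_{a,q}(a_\chi)$ and $M_{a,q}((1-a_\chi)\delta_{\chi^*\neq\tc})$, and bound the three pieces of $R_{\chi^*}^{(1)}$ via the second-difference integral representation. Your elementary bounds are in places slightly sharper than the paper's (e.g.\ the exact evaluation $\Delta_{2,h}\log x=\log(1-h^2/x^2)$ giving $36/11$ where the paper settles for $12$, and the termwise geometric summation of the trivial-zero series), but they land on the same constants $1.7$ and $6/(x(x-1))$, so this is the same proof in all essentials.
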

The value $\frac{5}{6}$ in the upper bound $h\leq \frac{5}{6}x$ could be changed in a quite large
interval without affecting the final result. However, in order to bound the secondary terms as $h^2/x$
and $h^2/x^2$ respectively, it is essential to have an upper bound for $h/x$ strictly smaller than $1$.
\begin{proof}
We apply the operator $M_{a,q}\Delta_{2,h}$ to~\eqref{eq:A3}. We notice that $\Delta_{2,h}x^{j}=0$ for
$j=0$, $1$, and in general
\[
\Delta_{2,h}f(x) = \int_0^h (h-u)(f''(x+u)+f''(x-u))\d u
\]
for every $C^2$ function. Thus
\begin{align}
M_{a,q}\Delta_{2,h} \Big(\frac{x^2}{2}\delta_{\chi^*=\tc}\Big)
&= M_{a,q}(\delta_{\chi^*=\tc})\cdot\Delta_{2,h}\Big(\frac{x^2}{2}\Big)
= \frac{h^2}{\eul(q)},                                                                \label{eq:A6}\\
M_{a,q}\Delta_{2,h} (- x r_{\chi^*} + r'_{\chi^*})
&= 0,                                                                                 \label{eq:A7}
\end{align}
and we have still to bound $M_{a,q}\Delta_{2,h}R_{\chi^*}^{(1)}(x)$. This is the sum of three terms:
\begin{gather*}
M_{a,q}\Delta_{2,h} \big(a_\chi\log x\big),
\qquad\qquad
M_{a,q}\Delta_{2,h} \big((1-a_\chi)\delta_{\chi^*\neq \tc}\,x\log x\big),                        \\
M_{a,q}\Delta_{2,h} \Big(\sum_{n=1}^\infty \frac{x^{1-2n-a_\chi}}{(2n+a_\chi)(2n+a_\chi-1)}\Big).
\end{gather*}
Since the set of even characters is a subgroup of $\widehat{(\Z/q\Z)^*}$ of index two, we have
\begin{align*}
M_{a,q}(1-a_\chi)
&= \frac{1}{\eul(q)}\sum_{\substack{\chi\in \widehat{(\Z/q\Z)^*}\\ \chi \even}}\overline{\chi(a)}
= \frac{1}{2}\delta_{\pm 1[q]}(a),                                                                \\
M_{a,q}((1-a_\chi)\delta_{\chi^*\neq \tc})
&= \frac{1}{\eul(q)}\Big(\sum_{\substack{\chi\in \widehat{(\Z/q\Z)^*}\\ \chi \even}}\overline{\chi(a)}-1\Big)
= \frac{1}{2}\delta_{\pm 1[q]}(a) - \frac{1}{\eul(q)} ,                                           \\
M_{a,q}(a_\chi)
&= \frac{1}{\eul(q)}\sum_{\substack{\chi\in \widehat{(\Z/q\Z)^*}\\ \chi \odd}}\overline{\chi(a)}
= \frac{1}{2}\delta_{\pm 1[q]}(a)\eta(a),
\end{align*}
where $\eta$ is any odd character modulo $q$. Moreover,
\begin{align*}
|\Delta_{2,h} \log x|
&=    \int_0^h (h-u)\Big(\frac{1}{(x+u)^2} + \frac{1}{(x-u)^2}\Big)\d u
=    \int_0^h \frac{2(h-u)(x^2+u^2)}{(x^2-u^2)^2}\d u\\
&\leq \frac{2}{(x^2-h^2)^2}\int_0^h (h-u)(x^2+u^2)\d u
=    \frac{h^2(x^2 + \frac{h^2}{6})}{(x^2-h^2)^2}
\leq  12 \frac{h^2}{x^2},
\end{align*}
%
where for the last inequality we have used the assumption $0<h\leq 5x/6$. Thus we have that
\begin{equation}\label{eq:A8}
|M_{a,q}\Delta_{2,h} \big(a_\chi\log x\big)|
\leq 6\frac{h^2}{x^2}\delta_{\pm 1[q]}(a).
\end{equation}
\smallskip
Similarly, for $0\leq h\leq 5x/6$ it follows that
\[
|\Delta_{2,h}x\log x|
\leq \frac{xh^2}{x^2-h^2}
\leq 3.4\frac{h^2}{x},
\]
%
%
and so
\begin{equation}\label{eq:A9}
|M_{a,q}\Delta_{2,h}\big((1-a_\chi)\delta_{\chi^*\neq \tc}\,x\log x\big)|
\leq 1.7\Big|\delta_{\pm 1[q]}(a) - \frac{2}{\eul(q)}\Big|\frac{h^2}{x}.
\end{equation}
\smallskip
Lastly,
\begin{align*}
\Delta_{2,h} \sum_{n=1}^\infty &\frac{x^{1-2n-a_\chi}}{(2n+a_\chi)(2n+a_\chi-1)}
 =    \int_0^h (h-u)\sum_{n=1}^\infty \Big((x+u)^{-1-2n-a_\chi} + (x-u)^{-1-2n-a_\chi} \Big)\d u\\
&=    \int_0^h (h-u)\Big(\frac{(x+u)^{-1-a_\chi}}{(x+u)^2-1} +
\frac{(x-u)^{-1-a_\chi}}{(x-u)^2-1}\Big)\d u.
\intertext{Using $h\leq 5x/6$ (and taking $u=vh$ with the fact that the function increases in $h$), we
have that the above expression is bounded above by}
& \frac{h^2}{x^{3+a_\chi}}\int_0^1 (1-v)\Big(\frac{(1+5v/6)^{-1-a_\chi}}{(1+5v/6)^2-x^{-2}} + \frac{(1-5v/6)^{-1-a_\chi}}{(1-5v/6)^2-x^{-2}}\Big)\d v\\
&= \frac{6h^2}{5x^{3+a_\chi}}\int_0^{5/6}\Big(1-\frac{6}{5}w\Big)\Big(\frac{(1+w)^{-1-a_\chi}}{(1+w)^2-x^{-2}} + \frac{(1-w)^{-1-a_\chi}}{(1-w)^2-x^{-2}}\Big)\d w.
\intertext{Since $x\geq 100$, this is bounded above by}
& \frac{6h^2}{5x^{3+a_\chi}}\int_0^{5/6}\Big(1-\frac{6}{5}w\Big)\Big(\frac{(1+w)^{-1-a_\chi}}{(1+w)^2-100^{-2}} + \frac{(1-w)^{-1-a_\chi}}{(1-w)^2-100^{-2}}\Big)\d w
 \leq 12\frac{h^2}{x^{3+a_\chi}}.
\end{align*}
%
%
Thus, we have that
\begin{equation}\label{eq:A10}
\Big|M_{a,q}\Delta_{2,h} \Big(\sum_{n=1}^\infty \frac{x^{1-2n-a_\chi}}{(2n+a_\chi)(2n+a_\chi-1)}\Big)\Big|
\leq 6\frac{h^2}{x^3}\Big(1 + \frac{1}{x}\Big)\delta_{\pm 1[q]}(a),
\end{equation}
and now the claim follows from~\eqref{eq:A3} and~(\ref{eq:A6}--\ref{eq:A10}).
\end{proof}
We split the sum on zeros as
\[
\sum_{\rho\in Z_{\chi^*}} \frac{x^{\rho+1}}{\rho(\rho+1)}
=: \Sigma_{\chi^*,1} + \Sigma_{\chi^*,2},
\]
with $\Sigma_{\chi^*,1}$ and $\Sigma_{\chi^*,2}$ representing the sums on zeros with $|\Imm(\rho)|\leq T$
and $|\Imm(\rho)|> T$, respectively, for a convenient parameter $T>0$. The next lemma provides a bound
for $\Sigma_{\chi^*,2}$.
\begin{lemma}\label{lem:A4}
Assume GRH, $q\geq 3$, $0\leq h\leq x$ and $T\geq 16$. Then
\[
\big|M_{a,q}\Delta_{2,h}\Sigma_{\chi^*,2}\big|
\leq \frac{4}{\pi}\Big(x^{3/2}+\frac{h^2}{4\sqrt{x}}\Big)\Big(1 + \frac{2.89}{T}\Big)\frac{\log(qT)}{T}.
\]
\end{lemma}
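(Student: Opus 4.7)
The approach is to reduce the bound on $|M_{a,q}\Delta_{2,h}\Sigma_{\chi^*,2}|$ to a per-zero estimate via the mean-value property of $M_{a,q}$ and the triangle inequality, and then to sum over the zeros with $|\Imm\rho|>T$ using an explicit zero-counting formula. Since the paper noted at the start of Section~\ref{sec:A3} that $|M_{a,q}f_\chi|\leq\max_\chi|f_\chi|$, it suffices to prove the bound uniformly for $|\Delta_{2,h}\Sigma_{\chi^*,2}|$ in $\chi\pmod{q}$. Interchanging the absolutely convergent sum over zeros with $\Delta_{2,h}$ and applying the triangle inequality termwise gives
\[
|\Delta_{2,h}\Sigma_{\chi^*,2}|\leq\sum_{\substack{\rho\in Z_{\chi^*}\\ |\Imm\rho|>T}}\frac{|(x+h)^{\rho+1}-2x^{\rho+1}+(x-h)^{\rho+1}|}{|\rho(\rho+1)|}.
\]

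On GRH $|x^{\rho+1}|=x^{3/2}$ and $|(x\pm h)^{\rho+1}|=(x\pm h)^{3/2}$ for every nontrivial zero $\rho$, so the numerator is at most $(x+h)^{3/2}+2x^{3/2}+(x-h)^{3/2}=4x^{3/2}+\Delta_{2,h}x^{3/2}$. I would then invoke the elementary inequality $(1+y)^{3/2}+(1-y)^{3/2}-2\leq y^2$ valid on $y\in[0,1]$ (the difference has a double zero at $y=0$ and the sign of its second derivative is readily checked), which implies $\Delta_{2,h}x^{3/2}\leq h^2/\sqrt{x}$ for $0<h\leq x$. Each summand is thus bounded by $(4x^{3/2}+h^2/\sqrt{x})/|\rho(\rho+1)|$.

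Finally, I would estimate the tail sum using GRH: the inequality $|\rho(\rho+1)|=\sqrt{(1/4+\gamma^2)(9/4+\gamma^2)}\geq\gamma^2$ together with an explicit form of the Riemann--von Mangoldt formula for $N^*(T,\chi^*)=\#\{\rho\in Z_{\chi^*}:|\Imm\rho|\leq T\}$ and Stieltjes integration by parts should give
\[
\sum_{|\gamma|>T}\frac{1}{|\rho(\rho+1)|}\leq\frac{1}{\pi}\Big(1+\frac{2.89}{T}\Big)\frac{\log(qT)}{T}
\]
for $q\geq 3$ and $T\geq 16$. Combining with the per-zero bound and rewriting $4x^{3/2}+h^2/\sqrt{x}=4(x^{3/2}+h^2/(4\sqrt{x}))$ yields the stated inequality. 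The main obstacle is making the secondary constant $2.89$ explicit: this requires a sharp explicit version of the Riemann--von Mangoldt counting estimate for primitive Dirichlet $L$-functions and careful tracking of the $O(\log(qT))$ error term through the integration by parts, together with a clean treatment of the (smaller-order) discrepancy between $|\rho(\rho+1)|$ and $\gamma^2$ at finite height.
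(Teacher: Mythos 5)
Your treatment of the main term is correct and matches the paper: the reduction of the numerator to $(x+h)^{3/2}+2x^{3/2}+(x-h)^{3/2}\leq 4x^{3/2}+h^2/\sqrt{x}$ via the elementary inequality $(1+y)^{3/2}+(1-y)^{3/2}-2\leq y^2$ on $[0,1]$ is exactly the first step of the paper's proof. The gap is in the tail-sum estimate, which you state but do not prove, and which in the form you propose is actually false in the stated range. You reduce to a \emph{uniform per-character} bound
$\sum_{|\gamma|>T}|\rho(\rho+1)|^{-1}\leq\frac{1}{\pi}(1+\frac{2.89}{T})\frac{\log(qT)}{T}$
via the mean-value property, and plan to obtain it from an explicit Riemann--von Mangoldt formula for a single primitive $L$-function plus partial summation. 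For one character of conductor $q^*\leq q$ the main term of that computation is $\frac{1}{\pi T}\log\frac{q^*T}{2\pi}+\frac{1}{\pi T}$, which leaves only about $\frac{0.838}{\pi T}+\frac{2.89\log(qT)}{\pi T^2}$ of slack below your target; but the known explicit error terms in per-character zero-counting formulae (e.g.\ Trudgian's, of the shape $c_1\log(q^*T)+c_2$ with $c_2\approx 6.4$) contribute roughly $\frac{2c_1\log(qT)+2c_2}{T^2}$ after partial summation. At $T=16$ and small $q$ this exceeds the available slack by a wide margin (one needs $\log(qT)\gtrsim 30$ for the numbers to close), so the constant $2.89$ cannot be recovered this way for $T\geq 16$ and $q\geq 3$.

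The paper avoids precisely this obstruction. It keeps the \emph{average} over characters, $|M_{a,q}\Delta_{2,h}\Sigma_{\chi^*,2}|\leq\frac{4}{\varphi(q)}(x^{3/2}+\frac{h^2}{4\sqrt{x}})\sum_{\chi}\sum_{|\mathrm{Im}\rho|>T}|\rho(\rho+1)|^{-1}$, and then uses the factorization $\zeta_{\Q[q]}(s)=\prod_{\chi}L(s,\chi^*)$ to view the double sum as a single sum over zeros of the Dedekind zeta function of the cyclotomic field. The explicit estimate from \cite[Eq.~(3.7)]{GrenieMolteni3} for that sum has error terms of the form $d_2 n_\K+d_3$ with only \emph{one} additive constant $d_3$ for all $\varphi(q)$ characters, so per character the parasitic constant drops from $\approx 6.4$ to $\approx 3.5+O(1/\varphi(q))$, which is just small enough to be absorbed into $(1+\frac{2.89}{T})\log(qT)$ for $T\geq 16$ (using $\log(2\pi)+\sum_{p\mid q}\frac{\log p}{p-1}$ as additional slack). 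The paper even remarks that the per-character route "could" be followed but that the cyclotomic factorization "reduces the error term"; for the specific constants in the lemma that reduction is not optional. To repair your argument you would either need to replace the max by the average and import the Dedekind-zeta estimate (or reprove it), or weaken the constant $2.89$ and/or raise the threshold on $T$, which would propagate into the later sections.
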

\begin{proof}
For $z\in[0,1]$ by double squaring we get $(1+z)^{3/2} + (1-z)^{3/2} \leq 2 + z^2$ which implies that
$(x+h)^{3/2}+2x^{3/2}+(x-h)^{3/2}\leq 4x^{3/2} + h^2/\sqrt{x}$ for $0\leq h\leq x$.
Thus, GRH gives us that
\[
\big|\Delta_{2,h}\Sigma_{\chi^*,2}\big|
\leq 4\Big(x^{3/2}+\frac{h^2}{4\sqrt{x}}\Big) \sum_{\substack{\rho\in Z_{\chi^*}\\ |\Imm(\rho)|> T}} \frac{1}{|\rho(\rho+1)|},
\]
so that
\[
\big|M_{a,q}\Delta_{2,h}\Sigma_{\chi^*,2}\big|
\leq \frac{4}{\eul(q)}\Big(x^{3/2}+\frac{h^2}{4\sqrt{x}}\Big) \sum_{\chi\in\widehat{(\Z/q\Z)^*}}\sum_{\substack{\rho\in Z_{\chi^*}\\ |\Imm(\rho)|> T}} \frac{1}{|\rho(\rho+1)|}.
\]
Each inner sum on zeros could be estimated by partial summation using the known formulas for the number
of zeros of each $L(s,\chi)$ (see Trudgian~\cite{TrudgianIII}), but we can reduce the error term by
connecting the sum with a similar sum for a Dedekind zeta function. In fact one has the factorization
$\zeta_{\Q[q]}(s) = \prod_{\chi\in\widehat{(\Z/q\Z)^*}}L(s,\chi^*)$, where $\Q[q]$ is the cyclotomic
field of $q$-roots of unity (see~\cite[Th.~4.3]{Washington1}), and thus
\begin{equation}\label{eq:A11}
|M_{a,q}\Delta_{2,h}\Sigma_{\chi^*,2}|
\leq \frac{4}{\eul(q)}\Big(x^{3/2}+\frac{h^2}{4\sqrt{x}}\Big) \sum_{\substack{\rho\in Z_q\\ |\Imm(\rho)|> T}} \frac{1}{|\rho(\rho+1)|},
\end{equation}
where $Z_q$ is the multiset of zeros of $\zeta_{\Q[q]}$. This sum has already been estimated
in~\cite[Eq.~(3.7)]{GrenieMolteni3} for a generic number field $\K$, the result being that
\[
\sum_{|\gamma|\geq T}\frac{\pi}{|\rho|^2}
\leq \Big(1 + \frac{2.89}{T}\Big)\frac{W_\K(T)}{T}
    +\Big(1 + \frac{18.61}{T}\Big)\frac{n_\K}{T}
    +\frac{17.31}{T^2}
\]
for all $T \geq 5$ where $W_\K(T) := \log\disc_\K+n_\K\log(T/2\pi)$, $\disc_\K$ is the absolute value of
the discriminant of $\K$ and $n_\K$ its degree. For $\K=\Q[q]$, one has that $\log\disc_\K=\eul(q)\log q
-\eul(q)\sum_{p|q}\frac{\log p}{p-1}$ (see~\cite[Proposition~2.7]{Washington1}) and $n_\K=\eul(q)$, thus
this formula becomes
\begin{equation}\label{eq:A11bis}
\sum_{|\gamma|\geq T}\frac{\pi}{|\rho|^2}
\leq \Big(1 + \frac{2.89}{T}\Big)\frac{\eul(q)}{T}\Big(\log q - \sum_{p|q}\frac{\log p}{p-1}+\log\Big(\frac{T}{2\pi}\Big)\Big)
    + \Big(1 + \frac{18.61}{T}\Big)\frac{\eul(q)}{T}
    + \frac{17.31}{T^2}
\end{equation}
for all $T \geq 5$. We simplify this to
\begin{equation}\label{eq:A12}
\sum_{|\gamma|\geq T}\frac{\pi}{|\rho|^2}
\leq \Big(1 + \frac{2.89}{T}\Big)\frac{\eul(q)}{T}\log(qT)
\end{equation}
for all $T \geq 16$.
Indeed, \eqref{eq:A11bis} shows that \eqref{eq:A12} holds as soon as
\[
1+\frac{18.61-2.89\log2\pi}{T} + \frac{17.31}{\eul(q)T}
\leq \Big(1+\frac{2.89}{T}\Big)\sum_{p|q}\frac{\log p}{p-1}
    +\log 2\pi,
\]
which is implied for $T\geq 16$ by
\begin{equation}\label{eq:A12bis}
1+\frac{18.61-2.89\log2\pi}{16} + \frac{17.31}{16\eul(q)}
\leq \sum_{p|q}\frac{\log p}{p-1}
    +\log2\pi.
\end{equation}
By inspection we test that this inequality holds for each $q=3,\ldots,1000$.
On the other hand, if $q\geq 1000$, then using the multiplicativity of $\eul(q)/q^{3/4}$ one can prove
easily that $\eul(q)\geq q^{3/4} > 170$. Thus~\eqref{eq:A12bis} still holds
because $1+\frac{18.61-2.89\log2\pi}{16} + \frac{17.31}{16\cdot 170} \leq \log2\pi$.\\
%
The proof concludes combining~\eqref{eq:A11} and~\eqref{eq:A12}.
\end{proof}
Collecting the results in Lemmas~\ref{lem:A1},~\ref{lem:A2},~\ref{lem:A3} and~\ref{lem:A4} we get
\begin{multline*}
\sum_{n=a\Pmod{q}} \Lambda(n) K(x-n;h)
\geq \frac{h^2}{\eul(q)}
  - |M_{a,q}\Delta_{2,h}\Sigma_{\chi^*,1}|
  - \frac{4}{\pi}\Big(x^{3/2}+\frac{h^2}{4\sqrt{x}}\Big)\Big(1 + \frac{2.89}{T}\Big)\frac{\log(qT)}{T}  \\
  - \omega(q)h\log(2x)
  - \Big[1.7\Big|\delta_{\pm 1[q]}(a) - \frac{2}{\eul(q)}\Big|\frac{h^2}{x}
         + \delta_{\pm 1[q]}(a)\frac{6h^2}{x(x-1)}
    \Big].
\end{multline*}
We simplify it by noticing that
\[
  1.7\Big|\delta_{\pm 1[q]}(a) - \frac{2}{\eul(q)}\Big|\frac{h^2}{x}
         + \delta_{\pm 1[q]}(a)\frac{6h^2}{x(x-1)}
  \leq 1.7\frac{h^2}{x}
\]
when $q\geq 3$ (thus $\eul(q)\geq 2$) and $x\geq 2\eul(q)+1$. In this way we deduce that
%
%
\begin{align*}
\sum_{n=a\Pmod{q}} \Lambda(n) K(x-n;h)
\geq& \frac{h^2}{\eul(q)}
  - |M_{a,q}\Delta_{2,h}\Sigma_{\chi^*,1}|
  - 4\Big(x^{3/2}+\frac{h^2}{4\sqrt{x}}\Big)\Big(1 + \frac{2.89}{T}\Big)\frac{\log(qT)}{\pi T} \\
  &- \omega(q)h\log(2x)
  - 1.7\frac{h^2}{x}.
\end{align*}
Now we remove the contribution of prime powers. We get
\begin{align*}
\sum_{n=a\Pmod{q}} \Lambda(n) K(x-n;h)
\leq h\sum_{\substack{|n-x|<h\\ n=a\Pmod{q}}} \Lambda(n)
= h\Big(\sum_{\substack{|p-x|<h\\ p=a\Pmod{q}}} \log p + \sum_{2\leq k}\sum_{\substack{|p^k-x|<h\\ p^k=a\Pmod{q}}} \log p\Big)
\end{align*}
and removing the arithmetical condition one gets
\begin{align*}
\sum_{2\leq k}\sum_{\smash[b]{\substack{|p^k-x|<h\\ p^k=a\Pmod{q}}}} \log p
&\leq\sum_{2\leq k}\sum_{|p^k-x|<h} \log p
\leq [\psi(x+h)-\vartheta(x+h)] - [\psi(x-h)-\vartheta(x-h)]     \\
&\leq (1+10^{-6})\sqrt{x+h} + 3\sqrt[3]{x+h}
    - 0.998 684\sqrt{x-h}
\end{align*}
for every $x\geq 121$ (see~\cite[Cor.~2]{PlattTrudgian} and \cite[Th.~6]{RosserSchoenfeld2}). Assuming
that $h\leq 5x/6$ (as we have done for Lemma~\ref{lem:A3}) we have
\begin{equation}\label{eq:A13}
\sum_{2\leq k}\sum_{\substack{|p^k-x|<h\\ p^k=a\Pmod{q}}} \log p
\leq 0.95\sqrt{x} + 3.7\sqrt[3]{x}.
\end{equation}
%
%
Note that the Brun--Titchmarsh theorem for primes in arithmetic progressions (eventually in intervals --
see~\cite{MontgomeryVaughan2}) produces a much better bound, but only when $x$ and $q$ are much
larger than what we need to prove our theorem. As a consequence we have decided not to use this tool.\\
To summarise so far, we have proved that for $q\geq 3$, $x\geq \max(121,2\eul(q)+1)$ and $h\leq
\frac{5}{6}x$ one has
\begin{equation}
\begin{aligned}
\sum_{\smash[b]{\substack{|p-x|<h\\ p=a\Pmod{q}}}} \log p
\geq& \frac{h}{\eul(q)}
  - \frac{1}{h}|M_{a,q}\Delta_{2,h}\Sigma_{\chi^*,1}|
  - 4\Big(x^{3/2}+\frac{h^2}{4\sqrt{x}}\Big)\Big(1 + \frac{2.89}{T}\Big)\frac{\log(qT)}{\pi hT} \\
 &- (0.95\sqrt{x} + 3.7\sqrt[3]{x})
  - \omega(q)\log(2x)
  - 1.7\frac{h}{x}.                                                                             \label{eq:A14}
\end{aligned}
\end{equation}
In Section~\ref{sec:A4} we provide an upper bound for $M_{a,q}\Delta_{2,h}\Sigma_{\chi^*,1}$. In this way
we will be able to prove the theorems in Sections~\ref{sec:A5} and~\ref{sec:A6}.

\section{Bound for $M_{a,q}\Delta_{2,h}\Sigma_{\chi^*,1}$}
\label{sec:A4}

\begin{lemma}\label{lem:A5}
Let $y>0$. Then for some $\theta\in [-1,1]$ we have that
\begin{align*}
\int_{0}^{y} \frac{\sin^2 t}{t^2} \d t
= \frac{\pi}{2} - \frac{1}{2y} + \frac{\theta}{4y^2}.
\end{align*}
\end{lemma}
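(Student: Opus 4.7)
The plan is to reduce the claim to the tail estimate $\bigl|\int_y^\infty \cos(2t)/(2t^2)\d t\bigr|\leq 1/(4y^2)$, which I will establish by a contour deformation in the complex plane rather than by integration by parts.

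First, starting from the Dirichlet identity $\int_0^\infty \sin^2 t/t^2\d t = \pi/2$, complementation gives $\int_0^y \sin^2 t/t^2\d t = \pi/2 - \int_y^\infty \sin^2 t/t^2\d t$. Using $\sin^2 t = (1-\cos 2t)/2$ on the tail and evaluating $\int_y^\infty \d t/(2t^2) = 1/(2y)$ one obtains
\[
\int_0^y \frac{\sin^2 t}{t^2}\d t = \frac{\pi}{2} - \frac{1}{2y} + \int_y^\infty \frac{\cos 2t}{2t^2}\d t,
\]
so that, setting $\theta := 4y^2 \int_y^\infty \cos(2t)/(2t^2)\d t$, the claim reduces to showing $|\theta|\leq 1$.

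For this residual bound I would write $\cos 2t = \Ree(e^{2it})$ and exploit the holomorphy of $e^{2it}/(2t^2)$ away from the origin to shift the integration contour from the real ray $[y,\infty)$ to the vertical ray $\{y+is : s\geq 0\}$. Applying Cauchy's theorem on the rectangle with corners $y,\,y+R,\,y+R+iR,\,y+iR$ and letting $R\to\infty$, the top edge contributes at most $O(e^{-2R}/R)$ by the exponential decay of $|e^{2it}|=e^{-2\Imm(t)}$ at height $R$, and the right edge contributes at most $O(1/R)$ from the $1/t^2$ factor; both vanish in the limit. This yields
\[
\int_y^\infty \frac{e^{2it}}{2t^2}\d t = ie^{2iy}\int_0^\infty \frac{e^{-2s}}{2(y+is)^2}\d s.
\]
Taking real parts and bounding absolute values with $|ie^{2iy}|=1$ and $|y+is|^2 = y^2+s^2 \geq y^2$, one obtains
\[
\Bigl|\int_y^\infty \frac{\cos 2t}{2t^2}\d t\Bigr|
\leq \int_0^\infty \frac{e^{-2s}}{2(y^2+s^2)}\d s
\leq \int_0^\infty \frac{e^{-2s}}{2y^2}\d s
= \frac{1}{4y^2},
\]
which is exactly $|\theta|\leq 1$.

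The main technical point is that a straightforward integration-by-parts attack falls just short of the sharp constant: one IBP produces $-\sin(2y)/(4y^2)$ plus a remainder whose obvious trivial bound is $1/(4y^2)$, leading only to $|\theta|\leq 2$, and iterating does not close the gap uniformly in $y$. The contour shift replaces the oscillating factor $\cos 2t$ by an exponentially decaying one, which is exactly what allows the sharp $1/(4y^2)$ constant to be achieved; the only routine verification needed is the vanishing of the two closing edges of the rectangle, which is immediate from the size estimates above.
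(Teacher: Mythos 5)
Your proof is correct, and the key step is handled by a genuinely different (and cleaner) method than the paper's. Both arguments begin identically, reducing the lemma to the tail bound $\bigl|\int_y^\infty \cos(2t)/(2t^2)\d t\bigr|\le 1/(4y^2)$, and you correctly diagnose why a single integration by parts only yields the weak constant $2$. From there the paper works with $f(x):=x^2\int_x^\infty \cos v/v^2\d v$ and splits into two regimes: on $[0,6]$ it exploits the ODE $y'=\tfrac{2}{x}(y-\tfrac{x}{2}\cos x)$ satisfied by $f$ together with a handful of numerically computed values ($f(5)$, $f(5.1)$, $f(6)$) to trace the graph and locate extrema, and for $x\ge 6$ it performs four integrations by parts followed by elementary trigonometric estimates. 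Your contour shift to the vertical ray $\{y+is:s\ge0\}$ replaces the oscillating integrand by the absolutely decaying $e^{-2s}/(2|y+is|^2)$, and the chain $|y+is|^2\ge y^2$, $\int_0^\infty e^{-2s}\d s=1/2$ delivers exactly $1/(4y^2)$ in one stroke, uniformly in $y>0$, with no case split and no numerical verification; the closing edges of the rectangle vanish as you say ($O(e^{-2R}/R)$ on top, $O(1/R^2)$ on the right). What the paper's approach buys is that it stays entirely within real-variable calculus, at the cost of a longer and partly computational argument; your approach buys brevity and sharpness at the (mild) cost of invoking Cauchy's theorem. Both are sound.
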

The claim with $\theta\in[-2,2]$ has a very simple proof. We optimize the result by proving the stronger
bound $\theta\in[-1,1]$.
\begin{proof}
We note that
\begin{align*}
\int_{0}^{y} \frac{\sin^2 t}{t^2} \d t
&= \frac{\pi}{2} - \int_{y}^{\infty} \frac{\sin^2 t}{t^2} \d t
= \frac{\pi}{2} - \int_{y}^{\infty} \frac{1-\cos(2t)}{2t^2} \d t
= \frac{\pi}{2} - \frac{1}{2y} + \int_{y}^{\infty} \frac{\cos(2t)}{2t^2} \d t.
\end{align*}
Therefore the claim states that $|f(x)|\leq 1$ when $x>0$, where $f(x):= x^2\int_x^{+\infty}\frac{\cos
v}{v^2}\d v$.
%
%
We prove this statement in two steps.
\begin{enumerate}
\item[Step 1)] The claim holds in $[0,6]$.\\
We notice that $f$ is the unique bounded solution in $(0,+\infty)$ of the ODE
$y'=\frac{2}{x}(y-\frac{x}{2}\cos x)$. We can use this equation to trace the graph of $f(x)$ in $[0,6]$.
The extremal points of $f$ solve $f(x)= \frac{x}{2}\cos x=: g(x)$, so $|f(x)|<1$ when $0<x\leq 2.4$,
since $\frac{x}{2}|\cos x|<1$ here.
%
%
Moreover, $f(5)=0.896\ldots$ so that $f'(5)> \frac{2}{5}(0.896-\frac{5}{2}\cos(5))>0$, and
$f(5.1)=0.899\ldots$ so that $f'(5)< \frac{2}{5.1}(0.9-\frac{5.1}{2}\cos(5.1))<0$,
%
%
thus $f$ is increasing for $x\in [2.4,5]$ (by the differential equation) and smaller than $1$ here
(because $f(5)<1$). Moreover there is a maximum for $f$ in $[5,5.1]$, and since $g(x)<0.97$ here, we
conclude that $|f(x)|<1$ in $[0,5.1]$. Moreover, $g(x)$ increases for $x\in[5.1,6]$ and $f(6)=0.50\ldots
< g(6)$,
%
%
thus $f(x)$ decreases here, and the value of $f(6)$ completes the proof of this step.
\item[Step 2)] The claim holds for $x\geq 6$.\\
Four integrations by parts give
\[
f(x) = -\sin x + 2\frac{\cos x}{x} + 6\frac{\sin x}{x^2} - 24\frac{\cos x}{x^3} + 120x^2\int_x^{+\infty}\frac{\cos v}{v^6}\d v
\]
so that
\[
|f(x)| \leq \Big|\Big(1 - \frac{6}{x^2}\Big)\sin x - \frac{2}{x}\Big(1 - \frac{12}{x^2}\Big)\cos x\Big| + \frac{24}{x^3}.
\]
We prove that this function is lower than $1$ for $x\geq 6$.
%
%
Multiplying by $x^2$, we have to prove that
\[
-x^2 + \frac{24}{x} < (x^2 - 6)\sin x - \frac{2}{x}(x^2 - 12)\cos x < x^2 - \frac{24}{x}.
\]
The first inequality is evident when $\cos x\leq 0$, and the second when $\cos x\geq 0$, respectively
(because we are assuming $x\geq 6$). Assuming $\cos x >0$ for the first one, and $\cos x <0$ for the
second one, both remaining inequalities are implied by the stronger bound:
\[
(x^2 - 6)\Big|\sin x - \frac{2}{x}\cos x\Big| < x^2 - \frac{24}{x}.
\]
Since $|\sin x -\alpha \cos x|\leq \sqrt{1+\alpha^2}\leq 1+\tfrac{\alpha^2}{2}$ (the first inequality by
elementary trigonometry, the second by convexity), it is sufficient to prove that
\[
(x^2 - 6)\Big(1 + \frac{2}{x^2}\Big) < x^2 - \frac{24}{x},
\]
which in fact holds for $x\geq 6$.
\end{enumerate}
\end{proof}

\begin{lemma}\label{lem:A6}
Let $0\leq h < x$. Then for every $\gamma\in\R$ there exists $\theta\in\C$ with $|\theta|\leq 1$ such
that
\[
\Big(1+\frac{h}{x}\Big)^{\frac{3}{2}+i\gamma} -2  + \Big(1-\frac{h}{x}\Big)^{\frac{3}{2}+i\gamma}
= -4\sin^2\Big(\frac{\gamma h}{2x}\Big) + \theta (2|\gamma|+1)\frac{h^2}{x^2}.
\]
\end{lemma}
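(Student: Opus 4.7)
The plan is to reduce the claim to the modulus inequality $|F(u)| \le (2|\gamma|+1)u^{2}$ for $F(u) := (1+u)^{s} + (1-u)^{s} - 2\cos(\gamma u)$ with $s = 3/2 + i\gamma$ and $u \in [0,1)$; since $-4\sin^{2}(\gamma u/2) = 2\cos(\gamma u) - 2$, the statement then holds with $\theta := F(u)/[(2|\gamma|+1)u^{2}]$. A direct check gives $F(0) = 0$ and $F'(0) = 0$ (the linear coefficients of $(1\pm u)^{s}$ cancel in the sum, and $\cos(\gamma u)$ is even), so Taylor's formula with integral remainder yields $F(u) = \int_{0}^{u}(u-t)\,F''(t)\,dt$, where $F''(t) = s(s-1)\bigl[(1+t)^{s-2} + (1-t)^{s-2}\bigr] + 2\gamma^{2}\cos(\gamma t)$.

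The next step is to exploit the decomposition $s(s-1) = (2i\gamma + 3/4) - \gamma^{2}$ in order to split $F''(t) = (2i\gamma + 3/4)L(t) - \gamma^{2}M(t)$, where $L(t) := (1+t)^{s-2} + (1-t)^{s-2}$ and $M(t) := L(t) - 2\cos(\gamma t)$. The first piece, whose prefactor has modulus $\sqrt{9/16 + 4\gamma^{2}} \le 2|\gamma| + 3/4$, furnishes the leading contribution: since $L(0) = 2$, integration against $(u-t)$ reproduces the main term $(3/4 + 2i\gamma)u^{2}$ plus a small variation. The second piece is designed precisely to neutralize the dangerous $\gamma^{2}$ prefactor: because $M(0) = 0$ and $M'(0) = 0$, one $\gamma$ is effectively traded away using the expansion $(1\pm t)^{s-2} = (1\pm t)^{-1/2}\,e^{\pm i\gamma t}\,e^{i\gamma[\log(1\pm t)\mp t]}$ together with the elementary estimates $|e^{i\theta}-1|\le|\theta|$, $|\log(1+t) - t|\le t^{2}/2$, and $|\log(1-t)+t|\le t^{2}/[2(1-t)]$, so that the $\gamma^{2}M(t)$ contribution ends up of size $|\gamma|\,t^{2}$ times an integrable weight.

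The main obstacle will be the singularity of $(1-t)^{-1/2}$, which enters through $|(1-t)^{s-2}|$ as $t\to 1^{-}$: the pointwise bound on $F''$ is not uniform, but the weight $(u-t)$ tames the integral, as $\int_{0}^{u}(u-t)(1-t)^{-1/2}\,dt$ stays bounded by a modest multiple of $u^{2}$ uniformly for $u<1$ — exactly the kind of singular-but-integrable estimate that already appeared in the proof of Lemma~\ref{lem:A3}, after the substitution $t=u\tau$. After careful bookkeeping, the leading piece contributes at most $(2|\gamma|+3/4)u^{2}$ and the correction at most $u^{2}/4$, summing to $(2|\gamma|+1)u^{2}$. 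The boundary case $\gamma = 0$ requires the real-variable inequality $(1+u)^{3/2} + (1-u)^{3/2} \le 2 + u^{2}$ on $[0,1]$, which follows by comparing Taylor coefficients: the $u^{2}$-coefficient on the left is $3/4$, and the sum of all higher even-order coefficients equals at most $2\sqrt{2} - 11/4 \approx 0.08$ at $u=1$, comfortably absorbed by the remaining deficit $1/4$.
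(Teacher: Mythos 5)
The paper offers no detailed argument for this lemma (it only remarks that the claim follows from the Taylor expansion of $\log(1+u)$ and elementary inequalities), so your plan must be judged on its own terms, and as written its bookkeeping does not close: both intermediate estimates that are supposed to sum to $(2|\gamma|+1)u^2$ fail in the large-$|\gamma|$ regime. For the leading piece you need $\big|\int_0^u(u-t)L(t)\,\d t\big|\leq u^2$, but the only tool you bring to bear is the modulus bound $|L(t)|\leq (1+t)^{-1/2}+(1-t)^{-1/2}$, and
\[
\int_0^u(u-t)\big[(1+t)^{-1/2}+(1-t)^{-1/2}\big]\d t
=\tfrac{4}{3}\big[(1+u)^{3/2}+(1-u)^{3/2}-2\big]
=u^2+\tfrac{1}{16}u^4+\cdots,
\]
which is strictly larger than $u^2$ and reaches about $1.105\,u^2$ at $u=1$. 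Multiplied by the prefactor $\sqrt{4\gamma^2+9/16}\geq 2|\gamma|$, this gives roughly $2.21|\gamma|u^2$ for $u$ near $1$, which already exceeds the entire target $(2|\gamma|+1)u^2$ once $|\gamma|\geq 5$. The real content of the lemma at this point is the cancellation coming from the oscillating phases $e^{i\gamma\log(1\pm t)}$, which your plan never uses: you identify the $(1-t)^{-1/2}$ singularity as the main obstacle, but integrability of the modulus is not where the difficulty lies. For the correction piece, your power count is off by $\gamma^2$: since $M(0)=M'(0)=0$ and $M''(0)=2(s-2)(s-3)+2\gamma^2=3/2-4i\gamma$, one has $M(t)=(3/4-2i\gamma)t^2+O(t^3)$, so $\gamma^2|M(t)|\sim 2|\gamma|^3t^2$ near the origin (not $|\gamma|t^2$ as you claim), and $\gamma^2\int_0^u(u-t)|M(t)|\,\d t\sim |\gamma|^3u^4/6$, which exceeds your claimed $u^2/4$ whenever $|\gamma|^3u^2>3/2$ (for instance $\gamma=10$, $u=0.05$ gives about $0.42\,u^2$). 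The reason the "trading away of one $\gamma$" does not help as stated is that the amplitude discrepancies $(1\pm t)^{-1/2}-1$ are only $O(t)$ individually; their first-order parts cancel against each other to produce $-it\sin(\gamma t)$, which is of size $|\gamma|t^2$, so that $M$ itself (not $\gamma^2 M$) is of size $|\gamma|t^2$.

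To repair the argument you would need either to keep the phases and exploit the exact evaluation $\int_0^u(u-t)L(t)\,\d t=\big[(1+u)^s+(1-u)^s-2\big]/\big(s(s-1)\big)$ in the regime where $|s(s-1)|$ is large (which makes the decomposition circular, since it reduces back to bounding the quantity in the lemma), or to regroup the terms so that the comparison is between $(1\pm u)^{s}$ and $e^{\pm i\gamma u}$ directly, splitting amplitude and phase errors and pairing the $O(u)$ amplitude terms so they combine into $\cos(\gamma u)\big[(1+u)^{3/2}+(1-u)^{3/2}-2\big]+i\sin(\gamma u)\big[(1+u)^{3/2}-(1-u)^{3/2}\big]$ before any modulus is taken. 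Even then the constants require care (the naive bounds $|\sin(\gamma u)|\leq|\gamma|u$ and $(1+u)^{3/2}-(1-u)^{3/2}\leq 3u$ give $3|\gamma|u^2$ for the second bracket alone), so the lemma is more delicate than your proposal acknowledges; the fix is a different organization of the cancellations, not merely tighter bookkeeping within your decomposition.
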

\begin{proof}
The proof is straightforward and follows from the Taylor expansion of $\log(1+u)$ and some elementary
inequalities.
\end{proof}

The definitions of $\Delta_{2,h}$ and $\Sigma_{\chi^*,1}$ show that
\[
\Delta_{2,h}\Sigma_{\chi^*,1}
 = \sum_{\substack{\rho\in Z_{\chi^*}\\ |\Imm(\rho)|\leq T}} \frac{(x+h)^{\rho+1}-2x^{\rho+1}+(x-h)^{\rho+1}}{\rho(\rho+1)},
\]
so that by Lemma~\ref{lem:A6} we deduce that
\begin{align*}
|\Delta_{2,h}\Sigma_{\chi^*,1}|
&\leq 4x^{3/2}\sum_{\substack{\tfrac{1}{2}+i\gamma\in Z_{\chi^*}\\ |\gamma|\leq T}}
                 \frac{\sin^2\big(\frac{\gamma h}{2x}\big)}{|(\tfrac{1}{2}+i\gamma)(\tfrac{3}{2}+i\gamma)|}
      + \frac{h^2}{\sqrt{x}}\sum_{\substack{\tfrac{1}{2}+i\gamma\in Z_{\chi^*}\\ |\gamma|\leq T}}
                 \frac{2|\gamma|+1}{|(\tfrac{1}{2}+i\gamma)(\tfrac{3}{2}+i\gamma)|}.
\end{align*}

As we have done for $\Sigma_{\chi^*,2}$ we use the factorization of the Dedekind zeta function $\zeta_\K$
of the cyclotomic field $\K:=\Q[q]$ of $q$-th roots of unity as products of $L(s,\chi^*)$; in this way we
deal with all zeros in $\cup_{\chi\in\widehat{(\Z/q\Z)^*}} Z_{\chi^*}$ as a unique step. This does not affect the
main part of the theorem, but reduces the size of the secondary terms, and makes the ranges for $q$ and
$x$ wider in the theorem.
\begin{align}
\eul(q)|M_{a,q}\Delta_{2,h}\Sigma_{\chi^*,1}|
&\leq \sum_{\chi\in\widehat{(\Z/q\Z)^*}}
      \Big[4x^{3/2}\sum_{\substack{\tfrac{1}{2}+i\gamma\in Z_{\chi^*}\\ |\gamma|\leq T}}
                      \frac{\sin^2\big(\frac{\gamma h}{2x}\big)}{|(\tfrac{1}{2}+i\gamma)(\tfrac{3}{2}+i\gamma)|}
           + \frac{h^2}{\sqrt{x}}\sum_{\substack{\tfrac{1}{2}+i\gamma\in Z_{\chi^*}\\ |\gamma|\leq T}}
                      \frac{2|\gamma|+1}{|(\tfrac{1}{2}+i\gamma)(\tfrac{3}{2}+i\gamma)|}
      \Big]                                                                                              \notag\\
&=  4x^{3/2}\sum_{\substack{\tfrac{1}{2}+i\gamma\in Z_{q}\\ |\gamma|\leq T}}
               \frac{\sin^2\big(\frac{\gamma h}{2x}\big)}{|(\tfrac{1}{2}+i\gamma)(\tfrac{3}{2}+i\gamma)|}
    + \frac{h^2}{\sqrt{x}}\sum_{\substack{\tfrac{1}{2}+i\gamma\in Z_{q}\\ |\gamma|\leq T}}
               \frac{2|\gamma|+1}{|(\tfrac{1}{2}+i\gamma)(\tfrac{3}{2}+i\gamma)|}.                      \label{eq:A15}
\end{align}
We deduce a bound for the second sum from two computations already made by the second and third author
for Dedekind zeta functions.
\begin{lemma}\label{lem:A7}
Assume GRH and let $T\geq 20$. Then
\[
\frac{1}{\eul(q)}\sum_{\substack{\tfrac{1}{2}+i\gamma\in Z_{q}\\ |\gamma|\leq T}}
                    \frac{2|\gamma|+1}{|(\tfrac{1}{2}+i\gamma)(\tfrac{3}{2}+i\gamma)|}
\leq \frac{1}{\pi}\log(q^2T)\log T + 1.93\log q - 4.35 + \frac{21.67}{\eul(q)}.
\]
%
%
\end{lemma}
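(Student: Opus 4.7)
The plan is to convert the weighted zero sum to a Stieltjes integral and apply partial summation against an explicit Riemann--von Mangoldt--type bound for the Dedekind zeta function $\zeta_\K$ of the cyclotomic field $\K=\Q[\zeta_q]$. By the same factorisation used to pass from~\eqref{eq:A11} to the Dedekind side, summing $\frac{1}{\eul(q)}\sum_{\chi}$ of the inner zero sum becomes $\frac{1}{\eul(q)}\sum_{\tfrac12+i\gamma\in Z_q,\,|\gamma|\le T}$. I would first compare the weight against its large--$\gamma$ asymptotic: a short calculation shows
\[
\frac{2|\gamma|+1}{|(\tfrac12+i\gamma)(\tfrac32+i\gamma)|}
 \leq \frac{2}{\max(1,|\gamma|)} + \frac{c}{\gamma^2+1}
\]
for some small explicit $c$, separating a ``main'' contribution $\sum 2/|\gamma|$ from a ``tail'' contribution $\sum 1/(|\gamma|^2+1)$ that will be swallowed into the constants $-4.35$ and $21.67/\eul(q)$.

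Next I would handle the main contribution by partial summation, writing
\[
\sum_{0<|\gamma|\le T}\frac{2}{|\gamma|}
 = \frac{2 N_\K(T)}{T} + 2\int_{t_0}^T \frac{N_\K(t)}{t^2}\,\d t + E_0,
\]
with a starting point $t_0$ large enough (say $t_0=20$, matching the hypothesis $T\ge 20$) that the small--$t$ contribution fits into an explicit constant. Using an explicit Riemann--von Mangoldt estimate $N_\K(t)\leq \tfrac{t}{\pi}[\log\disc_\K + n_\K\log(t/(2\pi e))] + \text{(error)}$ from the companion papers of the second and third authors, and substituting the cyclotomic data $n_\K=\eul(q)$ and $\log\disc_\K=\eul(q)\log q -\eul(q)\sum_{p\mid q}\frac{\log p}{p-1}\leq \eul(q)\log q$ (already used in Lemma~\ref{lem:A4}), the two integrals yield
\[
\frac{2\log\disc_\K}{\pi \eul(q)}\log T + \frac{n_\K}{\pi \eul(q)}\log^2 T
 \;=\; \frac{1}{\pi}\Big(2\log q+\log T\Big)\log T \;=\; \frac{1}{\pi}\log(q^2T)\log T,
\]
which is exactly the main term of the claim. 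The boundary contribution $2N_\K(T)/T$ and the $-n_\K\log(2\pi e)$ correction inside $N_\K$ produce a term of size $C\log q + C'$ with $C$ close to $\tfrac{2}{\pi}\approx 0.64$; together with the discarded tail weights $\sum 1/(\gamma^2+1)$ they must combine to the explicit $1.93\log q - 4.35$.

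The main obstacle is \emph{not} the overall shape, which follows routinely from partial summation, but rather the careful bookkeeping required to pin down the explicit constants $1.93$, $-4.35$, and $21.67$. Concretely, this means: (i) choosing a cut-off $t_0$ and combining the zero-free region $|\gamma|<t_0$ contribution with an integral starting from $t_0$ in a way that the explicit error tables in \cite{GrenieMolteni3} (or the analogous Trudgian-type bound) plug in cleanly; (ii) verifying that the comparison between $(2|\gamma|+1)/|\rho(\rho+1)|$ and $2/|\gamma|$ costs nothing beyond what fits into the constant $-4.35$, which in practice requires using a sharper form of the comparison for small $|\gamma|$; and (iii) absorbing the low-frequency zeros and the discrepancy between $\log\disc_\K/\eul(q)$ and $\log q$ (the $\sum_{p\mid q}\frac{\log p}{p-1}$ saving) together with the residual $1/T$ terms (recall $T\geq 20$) into the stated constants. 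Once this bookkeeping is carried out along the lines of the estimate~\eqref{eq:A12} that was already used for $\Sigma_{\chi^*,2}$, the bound drops out.
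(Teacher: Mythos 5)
There is a genuine gap in your accounting, on top of the fact that the explicit constants --- which are the entire content of this lemma --- are deferred rather than established. You claim that the tail contribution $\sum 1/(\gamma^2+1)$ arising from your pointwise comparison ``will be swallowed into the constants $-4.35$ and $21.67/\eul(q)$''. It cannot be: the zeros of $\zeta_\K$ have density $\approx\frac{1}{2\pi}\log\big(\dq(|t|/2\pi)^{\eul(q)}\big)$ per unit height, so
\[
\sum_{\tfrac12+i\gamma\in Z_q}\frac{1}{\gamma^2+1}\;\approx\;\tfrac12\log\dq+O(\eul(q)),
\]
and after dividing by $\eul(q)$ this is $\asymp\log q$, not $O(1)$. (Note also that your comparison constant $c$ must be at least about $1$ for large $|\gamma|$, since $\frac{2|\gamma|+1}{|(\frac12+i\gamma)(\frac32+i\gamma)|}-\frac{2}{|\gamma|}\sim\gamma^{-2}$, so this term cannot be made small.) In the paper this contribution is precisely the $0.54\log\dq$ of \cite[Lemma~4.1]{GrenieMolteni2}, and it accounts for $0.54$ of the coefficient $1.93$ in $1.93\log q$; the remaining $\approx 1.39$ comes from the boundary term $2N_\K(T)/T$ and the error terms of the zero-counting formula. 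Until you reroute the tail into the $\log q$ coefficient, verify that everything still fits under $1.93$, and actually carry out the constant-tracking you postpone in your last paragraph, the stated inequality is not proved. A further practical obstacle: the sharp constant for the tail sum in \cite{GrenieMolteni2} was obtained by the explicit-formula/linear-combination technique of Lemma~\ref{lem:A8}, not by partial summation from a cut-off $t_0=20$, and a pure partial-summation treatment of the low-lying zeros would likely lose too much there.

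For contrast, the paper's proof contains no partial summation at all. It bounds the weight pointwise by
\[
\frac{2|\gamma|+1}{|(\tfrac{1}{2}+i\gamma)(\tfrac{3}{2}+i\gamma)|}
\leq \frac{2}{|\tfrac{1}{2}+i\gamma|}
    +\frac{1}{|(\tfrac{1}{2}+i\gamma)(\tfrac{3}{2}+i\gamma)|}
\]
(valid since $|\gamma|\leq|\tfrac32+i\gamma|$), then imports two prepackaged explicit estimates for Dedekind zeta functions --- \cite[Eq.~(3.8)]{GrenieMolteni3} for $\sum_{|\gamma|\leq T}\pi/|\tfrac12+i\gamma|$ and \cite[Lemma~4.1]{GrenieMolteni2} for the second sum --- and finishes with $\log\dq\leq\eul(q)\log q$, the rewriting $\frac{2}{\pi}\log(\frac{T}{2\pi})\log q+\frac{1}{\pi}\log^2(\frac{T}{2\pi})=\frac{1}{\pi}\log(q^2T)\log T+\cdots$, and the hypothesis $T\geq 20$ to absorb the leftovers into $-4.35$. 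Your identification of where the main term $\frac{1}{\pi}\log(q^2T)\log T$ comes from is correct and is indeed how the first imported estimate was itself derived, but as written your argument neither reproduces the constants nor assigns the secondary contributions to the right terms.
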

\begin{proof}
In~\cite[Eq.~(3.8)]{GrenieMolteni3} it is proved that
\[
\sum_{\tfrac{1}{2}+i\gamma\in Z_{q}} \frac{\pi}{|\tfrac{1}{2}+i\gamma|}
\leq \Big(\log\Big(\frac{T}{2\pi}\Big) + 4.01\Big)\log \dq
     + \Big(\frac{1}{2}\log^2\Big(\frac{T}{2\pi}\Big) - 1.41\Big)\eul(q)
     + 25.57;
\]
and in~\cite[Lemma~4.1]{GrenieMolteni2} that
\[
\sum_{\tfrac{1}{2}+i\gamma\in Z_{q}} \frac{1}{|(\tfrac{1}{2}+i\gamma)(\tfrac{3}{2}+i\gamma)|}
\leq 0.54\log \dq - 1.03 \eul(q) + 5.39
\]
(both for $T\geq 5$). Thus,
\begin{align*}
&\sum_{\substack{\tfrac{1}{2}+i\gamma\in Z_{q}\\ |\gamma|\leq T}} \frac{2|\gamma|+1}{|(\tfrac{1}{2}+i\gamma)(\tfrac{3}{2}+i\gamma)|}
 \leq \sum_{\substack{\tfrac{1}{2}+i\gamma\in Z_{q}\\ |\gamma|\leq T}} \frac{2}{|\tfrac{1}{2}+i\gamma|}
+ \sum_{\tfrac{1}{2}+i\gamma\in Z_{q}} \frac{1}{|(\tfrac{1}{2}+i\gamma)(\tfrac{3}{2}+i\gamma)|} \\
&\leq \frac{2}{\pi}
      \Big[\Big(\log\Big(\frac{T}{2\pi}\Big) + 4.01\Big)\log \dq
           + \Big(\frac{1}{2}\log^2\Big(\frac{T}{2\pi}\Big) - 1.41\Big)\eul(q)
           + 25.57
      \Big]
    + 0.54\log \dq - 1.03 \eul(q) + 5.39
\end{align*}
and recalling that $\log\dq=\eul(q)\log q -\eul(q)\sum_{p|q}\frac{\log p}{p-1} \leq \eul(q)\log q$,
we get
\begin{align*}
\frac{1}{\eul(q)}
&\sum_{\substack{\tfrac{1}{2}+i\gamma\in Z_{q}\\ |\gamma|\leq T}}
 \frac{2|\gamma|+1}{|(\tfrac{1}{2}+i\gamma)(\tfrac{3}{2}+i\gamma)|}
 \leq \Big(\frac{2}{\pi}\log\Big(\frac{T}{2\pi}\Big) + 3.1\Big)\log q
           + \Big(\frac{1}{\pi}\log^2\Big(\frac{T}{2\pi}\Big) - 1.927\Big)
           + \frac{21.67}{\eul(q)}                                                           \\
%
%
&= \frac{1}{\pi}\log(q^2T)\log T
           + \Big(3.1-\frac{2}{\pi}\log(2\pi)\Big)\log q
           - \frac{2}{\pi}\log T\log(2\pi)
           + \frac{1}{\pi}\log^2(2\pi)
           - 1.927
           + \frac{21.67}{\eul(q)}.
\end{align*}
The claim follows by recalling that we are assuming $T\geq 20$ so that the contribution of all secondary
terms is $-4.35$, at most.
%
%
\end{proof}

\begin{lemma}\label{lem:A8}
Assume GRH and Let $\K$ be any number field. Then
\begin{align*}
\sum_{|\gamma|\leq 5}\frac{\gamma^2}{|(\tfrac{1}{2}+i\gamma)(\tfrac{3}{2}+i\gamma)|}
&\leq 1.5\log\disc_\K + 1.651n_\K - 1.577.
\end{align*}
\end{lemma}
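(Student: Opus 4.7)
The plan is to compare the sum with bounds on simpler sums over the zeros of $\zeta_\K$ that are already available in the authors' earlier work. Under GRH, for each zero $\rho=\tfrac{1}{2}+i\gamma$ we have $\gamma^{2}=|\rho|^{2}-\tfrac{1}{4}$, so the summand can be decomposed as
\[
\frac{\gamma^{2}}{|(\tfrac{1}{2}+i\gamma)(\tfrac{3}{2}+i\gamma)|}
= \frac{|\rho|^{2}-\tfrac{1}{4}}{|\rho||\rho+1|}
= \frac{|\rho|}{|\rho+1|}-\frac{1}{4|\rho(\rho+1)|}.
\]
The subtracted term, summed over $|\gamma|\leq 5$, is non-negative and can be controlled (if one wishes to sharpen the constant) by the inequality $\sum_{\rho\in Z_\K}1/|\rho(\rho+1)| \leq 0.54\log\disc_\K-1.03n_\K+5.39$ from Lemma~4.1 of \cite{GrenieMolteni2}, already invoked in the proof of Lemma~\ref{lem:A7}; dropping it provides an immediate upper bound on the full sum.

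For the remaining contribution I would apply partial summation. Since $|\rho|/|\rho+1|=\sqrt{(1/4+\gamma^{2})/(9/4+\gamma^{2})}$ is an even, increasing function of $|\gamma|$ bounded above by $1$, writing $N_\K(T):=\#\{\rho\in Z_\K:|\Imm\rho|\leq T\}$ gives
\[
\sum_{|\gamma|\leq 5}\frac{|\rho|}{|\rho+1|}
= \sqrt{\frac{25.25}{27.25}}\,N_\K(5)
 -\int_{0}^{5}\frac{d}{dt}\sqrt{\frac{1/4+t^{2}}{9/4+t^{2}}}\;N_\K(t)\,dt.
\]
Inserting an explicit Riemann--von Mangoldt bound of the form $N_\K(T)\leq a(T)\log\disc_\K+b(T)n_\K+c(T)$ under GRH (the same explicit counting estimates that underlie the bounds quoted in Lemmas~\ref{lem:A4} and~\ref{lem:A7}, from \cite{GrenieMolteni3}) and computing the one-dimensional integrals then yields an expression of the desired shape $A\log\disc_\K+Bn_\K+C$.

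The genuine obstacle is numerical rather than structural: matching the specific constants $1.5$, $1.651$, $-1.577$ requires calibrating the choice of the zero-counting bound and deciding whether to retain the $-\tfrac{1}{4|\rho(\rho+1)|}$ correction. The coefficient $1.5$ is noticeably smaller than the naive $5/\pi\approx 1.5915$ that would arise from the crude bound $F(\gamma)\leq 1$ together with $N_\K(5)\leq (5/\pi)\log\disc_\K+\cdots$, so some sharpening (either through the decomposition above, or through a finer use of partial summation that exploits that $|\rho|/|\rho+1|\leq \sqrt{25.25/27.25}\approx 0.963$ on $|\gamma|\leq 5$) is essential. Once the right combination is selected, the verification is a bookkeeping exercise analogous to the one carried out in Lemma~\ref{lem:A7}.
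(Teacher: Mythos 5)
Your plan is structurally sensible but it cannot reach the stated constants, and the paper in fact proves the lemma by a completely different method. The paper does not use partial summation against $N_\K(T)$ at all: it constructs an explicit majorant $F(\gamma)=\sum_{j=1}^{23}a_jf(s_j,\gamma)$ of the (truncated) summand out of the kernels $f(s,\gamma)=\Ree\frac{2}{s-\rho}+\Ree\frac{2}{s-\bar\rho}$ at the points $s_j=3/4+j/2$, determines the $a_j$ by an interpolation/tangency system, certifies the majorization with Sturm's algorithm, and then evaluates $\sum_j a_jf_\K(s_j)$ through the Weil-type identity~\eqref{eq:A16}. In that method the coefficient of $\log\disc_\K$ is exactly $\sum_ja_j=1.4999\ldots$, with \emph{no} additive loss coming from a zero-counting remainder term; the $n_\K$ coefficient is assembled from the $\Gamma'/\Gamma$ values and from bounding $\sum_ja_j\frac{\zeta_\K'}{\zeta_\K}(s_j)$ via $\tilde\Lambda_\K(n)\le n_\K\Lambda(n)$ and the sign pattern of $S(n)$.

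The gap in your route is quantitative but fatal. Writing $w(t)=t^2/\sqrt{(1/4+t^2)(9/4+t^2)}$, partial summation gives a main term $\frac{1}{\pi}\int_0^5w(t)\,\mathrm{d}t\,\log\disc_\K\approx 1.14\log\disc_\K$, but the remainder $\Rp_\K(t)=N_\K(t)-W_\K(t)$ enters through $w(5)\Rp_\K(5)-\int_0^5w'(t)\Rp_\K(t)\,\mathrm{d}t$, and with any available explicit bound of Trudgian type ($|\Rp_\K|\le d_1\log(\cdots)+\cdots$ with $d_1\approx0.395$, the value used elsewhere in this paper) this contributes up to about $2w(5)d_1\log\disc_\K\approx0.75\log\disc_\K$ more; GRH does not improve $d_1$ here. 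The total is around $1.9\log\disc_\K$, and even the crude bound $w(5)N_\K(5)$ gives $\tfrac{5}{\pi}w(5)+d_1\approx1.91$. So no calibration of the zero-counting input gets you to $1.5$. Your proposed rescue via the identity $\frac{\gamma^2}{|\rho||\rho+1|}=\frac{|\rho|}{|\rho+1|}-\frac{1}{4|\rho(\rho+1)|}$ does not help either: to gain from the subtracted term you would need a \emph{lower} bound for $\sum_{|\gamma|\le5}1/|\rho(\rho+1)|$, whereas Lemma~4.1 of the cited work is an upper bound for the unrestricted sum; dropping the term is valid but gains nothing. (There is also the minor issue that the explicit remainder bounds you would need are only stated for $T\ge5$, not on all of $[0,5]$.) In short, the decisive idea you are missing is the explicit-formula/majorant construction, which is what makes the coefficient $1.5$ attainable.
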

\begin{proof}
We apply the same technique we have already used for Lemmas~3.1 and 4.1 in~\cite{GrenieMolteni2} and for
Lemma~3.1 in~\cite{GrenieMolteni3}, stemming from the remark that the function $f_\K(s) :=
\sum_\rho\Ree\big(\frac{2}{s-\rho}\big)$ can be exactly computed via the alternative representation
\begin{equation}\label{eq:A16}
f_\K(s)
= 2\Ree\frac{\zeta'_\K}{\zeta_\K}(s)
    +\log\frac{\disc_\K}{\pi^{n_\K}}
    +\Ree\Big(\frac{2}{s}
             +\frac{2}{s-1}
        \Big)
    +(r_1+r_2)\Ree\frac{\Gamma'}{\Gamma}\Big(\frac{s}{2}\Big)
    +r_2\Ree\frac{\Gamma'}{\Gamma}\Big(\frac{s+1}{2}\Big).
\end{equation}
Let
\[
f(s,\gamma):=\frac{4(2s-1)}{(2s-1)^2+4\gamma^2},
\]
so that $f_\K(s) = \sum_{\gamma} f(s,\gamma)$, and let
\[
g(\gamma) :=
\begin{cases}
\frac{\gamma^2}{((\frac{1}{4}+\gamma^2)(\frac{9}{4}+\gamma^2))^{1/2}} & \text{if }|\gamma|\leq 5 \\
0                                                                     & \text{otherwise}.
\end{cases}
\]
so that $\sum_{|\gamma|\leq 5} \frac{\gamma^2}{|(\tfrac{1}{2}+i\gamma)(\tfrac{3}{2}+i\gamma)|} =
\sum_{\gamma} g(\gamma)$. We look for a finite linear combination of $f(s,\gamma)$ at suitable points
$s_j$ such that
\begin{equation}\label{eq:A17}
g(\gamma)
\leq F(\gamma) := \sum_j a_j f(s_j,\gamma)
\end{equation}
for all $\gamma \in \R$ so that
\begin{equation}\label{eq:A18}
\sum_{|\gamma|\leq 5}\frac{\gamma^2}{|(\tfrac{1}{2}+i\gamma)(\tfrac{3}{2}+i\gamma)|}
\leq \sum_j a_j f_\K(s_j).
\end{equation}
Once~\eqref{eq:A18} is proved, we recover a bound for the sum on zeros by recalling the
identity~\eqref{eq:A16}. According to this approach, the final coefficient of $\log\disc_\K$ will be the
sum of all $a_j$, and thus we are interested in the linear combinations for which this sum is as small as
possible. We set $s_j = 3/4 + j/2$ with $j=1$, \ldots, $2\kappa + 3$ for a suitable integer $\kappa$.
Let $\Upsilon \subset (0,+\infty)$ be a set with $\kappa$ numbers. We require:
\begin{enumerate}
\item $F(\gamma)=g(\gamma)$ for all $\gamma\in\Upsilon\cup\{0,5\}$,
\item $F'(\gamma)=g'(\gamma)$ for all $\gamma\in\Upsilon$,
\item $\lim_{\gamma\to\infty}\gamma^2 F(\gamma)=\lim_{\gamma\to\infty}\gamma^2 g(\gamma)=0$.
\end{enumerate}
This produces a set of $2\kappa+3$ linear equations for the $2\kappa+3$ constants $a_j$, and we hope that
these satisfy~\eqref{eq:A17} for every $\gamma$. We choose $\kappa:=10$ and $\Upsilon:=\{0.5, 1.5, 2,
2.4, 2.8, 7.9, 18, 10^2, 10^3 ,10^5\}$. Finally, with an abuse of notation we take for $a_j$ the solution
of the system, rounded above to $10^{-7}$: this produces the numbers in Table~\ref{tab:A2}.
\begin{table}[H]
\caption{Values of the coefficients.}\label{tab:A2}
\smallskip
{
\begin{tabular}{|r|r||r|r|}
  \toprule
  $j$     & \mltc{1}{c||}{$a_j\cdot 10^7$}    &  $j$    &  \mltc{1}{c|}{$a_j\cdot 10^7$} \\
  \midrule
  $ 1$    & $             -10417203$          &  $13$   & $  -18920268046344982450$      \\
  $ 2$    & $            1056404889$          &  $14$   & $   29659178484686316889$      \\
  $ 3$    & $          -65191418930$          &  $15$   & $  -37103060687919097856$      \\
  $ 4$    & $         2306235683461$          &  $16$   & $   36963001195180424340$      \\
  $ 5$    & $       -50953892956052$          &  $17$   & $  -29124459758424138052$      \\
  $ 6$    & $       745294415104297$          &  $18$   & $   17917680016161661642$      \\
  $ 7$    & $     -7554469767270438$          &  $19$   & $   -8424311293805783518$      \\
  $ 8$    & $     55069155554895360$          &  $20$   & $    2923218093750242944$      \\
  $ 9$    & $   -297487524612176257$          &  $21$   & $    -705518033170496127$      \\
  $10$    & $   1219731091815491142$          &  $22$   & $     105765338120745449$      \\
  $11$    & $  -3866974934911032963$          &  $23$   & $      -7417073631321810$      \\
  $12$    & $   9612711864719121022$          &         &                                \\
  \bottomrule
\end{tabular}
}
\end{table}
Then, using Sturm's algorithm, we prove that the values found actually give an upper bound for $g$, so
that~\eqref{eq:A18} holds with such $a_j$'s. These constants verify
\begin{equation}\label{eq:A19}
\begin{aligned}
&\sum_j a_j                                                =     1.4999\ldots,\\
&\sum_j a_j\frac{\Gamma'}{\Gamma}\Big(\frac{s_j}{2}\Big)   \leq  0.6552,
\end{aligned}
\qquad\qquad
\begin{aligned}
&\sum_j a_j\Big(\frac{2}{s_j}+\frac{2}{s_j-1}\Big)         \leq -1.577,       \\
&\sum_j a_j\frac{\Gamma'}{\Gamma}\Big(\frac{s_j+1}{2}\Big) \leq  0.7314.
\end{aligned}
\end{equation}
We write $\sum_j a_j\frac{\zeta'_\K}{\zeta_\K}(s_j)$ as
\[
-\sum_n\tilde\Lambda_\K(n)S(n)\quad\text{with}\quad S(n):=\sum_j \frac{a_j}{n^{s_j}}.
\]
We check numerically that $S(n)<0$ for $n\leq 10284$ with the exception of $S(4)$, which is in any case
$\leq 0.0237$. Then, since the sign
of $a_j$ alternates, we can easily prove that each pair $\frac{a_1}{n^{s_1}} + \frac{a_2}{n^{s_2}}$,
\ldots, $\frac{a_{2q+1}}{n^{s_{2q+1}}} + \frac{a_{2q+2}}{n^{s_{2q+2}}}$ and the last term
$\frac{a_{2q+3}}{n^{s_{2q+3}}}$ are negative for every $n\geq 10284$, thus
\begin{align}
\sum_j a_j\frac{\zeta'_\K}{\zeta_\K}(s_j)
&= -\sum_n\tilde\Lambda_\K(n)S(n)
 \leq -n_\K\sum_{n\neq 4}\Lambda(n)S(n)                        \label{eq:A20}\\
&=    -n_\K\Big[\sum_{n=1}^\infty\Lambda(n)S(n)
                -\Lambda(4)S(4)
           \Big]                                                \notag\\
&=     n_\K\Big[\sum_{j}a_j\frac{\zeta'}{\zeta}(s_j)
                +\Lambda(4)S(4)
           \Big]
 \leq 1.3372 n_\K.                                              \notag
\end{align}
The result now follows from~\eqref{eq:A16}, and~(\ref{eq:A18}--\ref{eq:A20}).
\end{proof}

Thus, by~\eqref{eq:A15} and Lemmas~\ref{lem:A7} and~\ref{lem:A8} we get
\begin{equation}
\begin{aligned}
|M_{a,q}\Delta_{2,h}\Sigma_{\chi^*,1}|
\leq&
\frac{4x^{3/2}}{\eul(q)}\sum_{\substack{\tfrac{1}{2}+i\gamma\in Z_{q}\\ 5\leq |\gamma|\leq T}}
               \frac{\sin^2\big(\frac{\gamma h}{2x}\big)}{|(\tfrac{1}{2}+i\gamma)(\tfrac{3}{2}+i\gamma)|}       \\
   &+ \frac{h^2}{\sqrt{x}}\Big(\frac{1}{\pi}\log(q^2T)\log T + 3.43\log q  - 2.699 + \frac{20.1}{\eul(q)}\Big). \label{eq:A21}
\end{aligned}
\end{equation}
%
%
To bound the sum by partial summation we need a formula for $N_{q}(T)$, the number of zeros $\rho$ of
$\zeta_\K$ with $\Ree(\rho)\in(0,1)$ and $|\Imm(\rho)|\leq T$. Let $W_{q}(T):=
\frac{T}{\pi}\log\big((\frac{ T}{2\pi e})^{\eul(q)}\dq\big)$, and let $\Rp_{q}(T) := N_{q}(T) -
W_{q}(T)$. Then
\begin{align*}
\Big|\frac{1}{\eul(q)}N_{q}(T) - \frac{T}{\pi}\log\Big(\frac{T}{2\pi e}(\dq)^{1/\eul(q)}\Big)\Big|
&=
\frac{|\Rp_{q}(T)|}{\eul(q)}\\
&\leq d_1\log\Big(\frac{T}{2\pi}(\dq)^{1/\eul(q)}\Big)
    + d_2
    + \frac{d_3}{\eul(q)}
 =: \frac{R_{q}(T)}{\eul(q)}
\qquad T\geq 5
\end{align*}
with $d_1 = 0.395$, $d_2 = 3.459$ and $d_3 = 2.559$ (this particular set of values is computed using the
algorithm of Trudgian~\cite{TrudgianIII} with $\eta=0.36$, $p=-\eta$ and $r=\frac{1+\eta-p}{1/2+\eta}=2$.)
Thus, by partial summation we get
\begin{align*}
\sum_{\smash[b]{\substack{\tfrac{1}{2}+i\gamma\in Z_{q}\\ 5 <|\gamma|\leq T}}} \frac{\sin^2\big(\frac{\gamma h}{2x}\big)}{\gamma^2}
=& \int_{5^+}^{T^+} \frac{\sin^2\big(\frac{\gamma h}{2x}\big)}{\gamma^2} \d N_q(\gamma)                 \\
=& \frac{\sin^2\big(\frac{hT}{2x}\big)}{T^2} N_q(T)
  -\frac{\sin^2\big(\frac{5h}{2x}\big)}{5^2} N_q(5^+)
  -\int_{5}^{T} \Big[\frac{\sin^2\big(\frac{\gamma h}{2x}\big)}{\gamma^2}\Big]' N_q(\gamma)\d\gamma     \\
=& \frac{\sin^2\big(\frac{hT}{2x}\big)}{T^2} \Rp_q(T)
  -\frac{\sin^2\big(\frac{5h}{2x}\big)}{5^2} \Rp_q(5^+)                                                 \\
&+ \int_{5}^{T} \frac{\sin^2\big(\frac{\gamma h}{2x}\big)}{\gamma^2}\log\Big(\frac{\gamma^{\eul(q)}\dq}{2\pi}\Big)\frac{\d\gamma}{\pi}
 - \int_{5}^{T} \Big[\frac{\sin^2\big(\frac{\gamma h}{2x}\big)}{\gamma^2}\Big]' \Rp_q(\gamma)\d\gamma.
\end{align*}
Recalling the upper bound $|\Rp_q(T)|\leq R_q(T)$ we get
\begin{align*}
\frac{1}{\eul(q)}&\sum_{5<|\gamma|\leq T} \frac{\sin^2\big(\frac{\gamma h}{2x}\big)}{\gamma^2}
\leq \frac{R_q(T)}{\eul(q)T^2}
 +\Big(\frac{h}{2x}\Big)^2 \frac{R_q(5)}{\eul(q)}\\
&+\int_{5}^{T} \frac{\sin^2\big(\frac{\gamma h}{2x}\big)}{\gamma^2} \log\Big(\frac{\gamma(\dq)^{1/\eul(q)}}{2\pi}\Big)\frac{\d\gamma}{\pi}
 +\frac{1}{\eul(q)}\int_{5}^{T} \Big|\frac{h}{2x}\frac{\sin\big(\frac{\gamma h}{x}\big)}{\gamma^2} - 2\frac{\sin^2\big(\frac{\gamma h}{2x}\big)}{\gamma^3}\Big| R_q(\gamma)\d\gamma.
\end{align*}
Using the inequality $|\frac{\sin(2v)}{2}-\frac{\sin^2 v}{v}|\leq\frac{4}{5}$,
%
%
we simplify to get
\begin{align*}
\frac{1}{\eul(q)}\sum_{5<|\gamma|\leq T} \frac{\sin^2\big(\frac{\gamma h}{2x}\big)}{\gamma^2}
\leq& \frac{R_q(T)}{\eul(q)T^2}
 +\Big(\frac{ h}{2x}\Big)^2 \frac{R_q(5)}{\eul(q)}\\
&+\int_{5}^{T} \frac{\sin^2\big(\frac{\gamma h}{2x}\big)}{\gamma^2} \log\Big(\frac{\gamma(\dq)^{1/\eul(q)}}{2\pi}\Big)\frac{\d\gamma}{\pi}
 +\frac{4h}{5\eul(q)x} \int_{5}^{T} \frac{R_q(\gamma)}{\gamma^2}\d\gamma,
\intertext{and since $\int_{5}^{+\infty} \frac{R_q(\gamma)}{\gamma^2}\d\gamma\leq 0.079\log\dq +
0.7528\eul(q) + 0.5118$, we get}
%
%
%
\frac{1}{\eul(q)}\sum_{5<|\gamma|\leq T} \frac{\sin^2\big(\frac{\gamma h}{2x}\big)}{\gamma^2}
\leq&
 \frac{h}{2\pi x}\int_{0}^{\frac{hT}{2x}} \frac{\sin^2 t}{t^2} \d t \log\Big(\frac{T(\dq)^{1/\eul(q)}}{2\pi}\Big)\\
&    + \frac{(0.253\log\dq + 2.409\eul(q) + 1.638)h}{4\eul(q)x}
     + \frac{R_q(T)}{\eul(q)T^2}
     + \Big(\frac{ h}{2x}\Big)^2 \frac{R_q(5)}{\eul(q)}.
%
%
\end{align*}
By Lemma~\ref{lem:A5} and the bound $R_q(T)\leq 0.395\log(T^{\eul(q)}\dq) + 2.74\eul(q) + 2.559$ for every
$T\geq 5$,
%
%
the bound becomes
\begin{align*}
\frac{1}{\eul(q)}&\sum_{5<|\gamma|\leq T}\frac{\sin^2\big(\frac{\gamma h}{2x}\big)}{\gamma^2}\\
%
\leq&  \frac{h}{4 x}\Big(1 - \frac{2}{\pi}\frac{x}{hT} + \frac{2}{\pi}\frac{x^2}{h^2T^2}\Big)\log\Big(\frac{T(\dq)^{1/\eul(q)}}{2\pi}\Big)
     + \frac{(0.253\log\dq + 2.409\eul(q) + 1.638)h}{4\eul(q)x}\\
&    + \frac{0.395\log(T^{\eul(q)}\dq) + 2.74\eul(q) + 2.559}{\eul(q)T^2}
     + \Big(\frac{ h}{2x}\Big)^2 \frac{0.395\log(5^{\eul(q)}\dq) + 2.74\eul(q) + 2.559}{\eul(q)}.
\intertext{We substitute $\log\dq=\eul(q)\log q -\eul(q)\sum_{p|q}\frac{\log p}{p-1}$ in the first two
terms, while for the last two we simply use the bound $\log\dq \leq \eul(q)\log q$. Moreover, since for
$q\geq 3$ we have $\eul(q)\geq 2$, so we use this hypothesis to simplify the terms decaying as $1/T^2$.
We get}
\leq&  \frac{h}{4 x}\Big(1 - \frac{2}{\pi}\frac{x}{hT} + \frac{2}{\pi}\frac{x^2}{h^2T^2}\Big)\log\Big(\frac{qT}{2\pi}\Big)
     + \Big(0.253\log q - \sum_{p|q}\frac{\log p}{p-1} + 2.409 + \frac{1.638}{\eul(q)}\Big)\frac{h}{4x}\\
&    + \frac{0.395\log(qT) + 4.02}{T^2}
     + \Big(0.395\log q + 3.376 + \frac{2.559}{\eul(q)}\Big)\frac{h^2}{4x^2},
\end{align*}
where we used that $1.253-\frac{2}{\pi}Y+\frac{2}{\pi}Y^2 \geq 1$ to simplify the coefficient of $\sum_{p|q}$.
%
%
%
Thus~\eqref{eq:A21} becomes
\begin{equation}
\begin{aligned}
|M_{a,q}\Delta_{2,h}\Sigma_{\chi^*,1}|
\leq
&h\sqrt{x}\log\Big(\frac{qT}{2\pi}\Big)
 -\frac{2}{\pi}h\sqrt{x}\frac{x}{hT}\log\Big(\frac{qT}{2\pi}\Big)
 +\frac{2}{\pi}h\sqrt{x}\frac{x^2}{h^2T^2}\log\Big(\frac{qT}{2\pi}\Big)           \\
&+ \Big(0.253\log q - \sum_{p|q}\frac{\log p}{p-1} + 2.409 + \frac{1.638}{\eul(q)}\Big)h\sqrt{x}
 + (1.58\log(qT) + 16.08)\frac{x^{3/2}}{T^2}                                      \\
&+ \Big(\frac{1}{\pi}\log(q^2T)\log T + 3.9\log q + 0.7 + \frac{22.7}{\eul(q)}\Big)\frac{h^2}{\sqrt{x}}.\label{eq:A22}
\end{aligned}
\end{equation}
%
%

\section{Proof of Theorem~\ref{th:A1}}
\label{sec:A5}
%
%
Substituting~\eqref{eq:A22} into~\eqref{eq:A14} and by~\eqref{eq:A13} we get
\begin{align*}
\sum_{\smash[b]{\substack{|p-x|<h\\ p=a\Pmod{q}}}} \log p
\geq& \frac{h}{\eul(q)}
 - \Big[\sqrt{x}\log\Big(\frac{qT}{2\pi}\Big)
        - \frac{2}{\pi}\sqrt{x}\frac{x}{hT}\log\Big(\frac{qT}{2\pi}\Big)
        + \frac{2}{\pi}\sqrt{x}\frac{x^2}{h^2T^2}\log\Big(\frac{qT}{2\pi}\Big)                \\
&       + \Big(0.253\log q - \sum_{p|q}\frac{\log p}{p-1} + 2.409 + \frac{1.638}{\eul(q)}\Big)\sqrt{x}
        + (1.58\log(qT) + 16.08)\frac{x^{3/2}}{hT^2}                                          \\
&       + \Big(\frac{1}{\pi}\log(q^2T)\log T + 3.9\log q + 0.7 + \frac{22.7}{\eul(q)}\Big)\frac{h}{\sqrt{x}}
   \Big]                                                                                      \\
&- 4\Big(x^{3/2}+\frac{h^2}{4\sqrt{x}}\Big)\Big(1 + \frac{2.89}{T}\Big)\frac{\log(qT)}{\pi hT}\\
&- 0.95\sqrt{x}
 - 3.7\sqrt[3]{x}
 - \omega(q)\log(2x)
 - 1.7\frac{h}{x}.
\end{align*}
We introduce a new parameter $\beta$ defined as $hT=:\beta x$. Thus, the previous inequality becomes
\begin{equation}
\begin{aligned}
&\frac{1}{\sqrt{x}}\sum_{\smash[b]{\substack{|p-x|<h\\ p=a\Pmod{q}}}} \log p
\geq \frac{h/\sqrt{x}}{\eul(q)}
 - \Big(1 + \frac{2}{\pi\beta} + \frac{2}{\pi\beta^2} + \frac{4\cdot 2.89}{\pi\beta T}\Big)\log(qT)
 - 0.253\log q                                                                                     \\
&- \Big(\frac{1}{\pi}\log(q^2T)\log T
        + 3.9\log q
        + 0.7
        + \frac{22.7}{\eul(q)}
        + \frac{1.58\log(qT) + 16.08}{\beta^2}
        + \Big(1 + \frac{2.89}{T}\Big)\frac{\log(qT)}{\pi T}
   \Big)\frac{h}{x}                                                                                \\
&- 1.53
 - \sum_{p|q}\frac{\log p}{p-1}
 - \frac{1.638}{\eul(q)}
 - \frac{2\log(2\pi)}{\pi\beta}\Big(1-\frac{1}{\beta}\Big)
 - 3.7 x^{-1/6}
 - \omega(q)\frac{\log(2x)}{\sqrt{x}}
 - 1.7\frac{h}{x^{3/2}}.\label{eq:A23}
\end{aligned}
\end{equation}
%
We simplify this formula by noticing that for $\beta\geq 20$ and $x\geq (10\eul(q)\log q)^2$
(unfortunately we cannot hope to prove anything as strong as this one, so that these assumptions will be
satisfied), the function appearing in the last line is larger than $-2$ for $q\geq 18$ (we use the
assumption $\frac{h}{x}\leq \frac{5}{6}$ to bound $\frac{h}{x^{3/2}}$ with $\frac{5/6}{\sqrt{x}}$, and
when $q\geq 800$ we apply the bounds $\omega(q)\leq \log q$ and $\eul(q)\geq \sqrt{q}$).
%
%
Thus we have
\begin{equation}\label{eq:A24}
\begin{aligned}
&\frac{1}{\sqrt{x}}\sum_{\smash[b]{\substack{|p-x|<h\\ p=a\Pmod{q}}}} \log p
\geq \frac{h/\sqrt{x}}{\eul(q)}
 - \Big(1 + \frac{2}{\pi\beta} + \frac{2}{\pi\beta^2} + \frac{4\cdot 2.89}{\pi\beta T}\Big)\log(qT)
 - 0.253\log q
 - 2                                                                                               \\
&- \Big(\frac{1}{\pi}\log(q^2T)\log T
        + 3.9\log q
        + 0.7
        + \frac{22.7}{\eul(q)}
        + \frac{1.58\log(qT) + 16.08}{\beta^2}
        + \Big(1 + \frac{2.89}{T}\Big)\frac{\log(qT)}{\pi T}
   \Big)\frac{h}{x}.
\end{aligned}
\end{equation}
We introduce three nonnegative parameters $\alpha$, $\delta$ and $\rho$, and we further set
\[
h = \eul(q)(\alpha\log x + \delta\log q + \rho)\sqrt{x},
\qquad
T = \frac{\beta}{\eul(q)}\frac{\sqrt{x}}{\alpha\log x + \delta\log q + \rho}.
\]
For the first part of the theorem, that is, the existence of a prime $p=a\pmod{q}$ with $|p-x|\leq h$, it is
sufficient to prove that the function appearing on the right hand side of~\eqref{eq:A24} is positive.
This happens when
\begin{equation}\label{eq:A25}
(1 - F)(\alpha\log x + \delta\log q + \rho) > G
\end{equation}
where
\begin{align*}
F(q,x)&:= \!\Big(\frac{\log(q^2T)\log T}{\pi}
           + 3.9\log q
           + 0.7
           + \frac{22.7}{\eul(q)}
           + \frac{1.58\log(qT) + 16.08}{\beta^2}
           + \Big(1 + \frac{2.89}{T}\Big)\frac{\log(qT)}{\pi T}
          \!\Big)\frac{\eul(q)}{\sqrt{x}},                       \\
G(q,x)&:= \!\Big(1 + \frac{2}{\pi\beta} + \frac{2}{\pi\beta^2} + \frac{4\cdot 2.89}{\pi\beta T}\Big)\log(qT)
           + 0.253\log q + 2.
\end{align*}
We still have to make a choice for $\beta$, for which we have two different requirements.\\
{\sf CASE 1}. Consider $x\to\infty$, for a fixed $q$. Then $\log T \sim \frac{1}{2}\log x$, as soon as
$\log\beta = o(\log x)$. Thus $F \ll \frac{\log^2 x}{\sqrt{x}}$, and to prove~\eqref{eq:A25} we need
\[
\alpha\log x + \rho
>
\Big(\frac{1}{2} + \frac{1}{\pi\beta} + \frac{1}{\pi\beta^2}\Big)\log x + O(1),
\]
not uniformly in $q$ and in the other parameters. Thus we need
\[
\alpha > \frac{1}{2} + \frac{1}{\pi\beta} + \frac{1}{\pi\beta^2},
\]
and we can improve this bound to $\alpha \geq \frac{1}{2}$ if we assume that $\beta \asymp \log x$, at
the cost of increasing $\rho$.\\
{\sf CASE 2}. Consider $q\to\infty$, and $x= x_0(q) = (m\eul(q)\log q)^2$ for some constant $m$. Then
\[
T = \frac{m\beta}{2\alpha+\delta} + O\Big(\frac{\log\log q}{\log q}\Big),
\]
not uniformly in $\alpha$, $\delta$, $\rho$ and $m$. In particular, it stays bounded if we assume that
$\beta$ is bounded, and
\[
F = \frac{1}{m}
     \Big(\frac{2}{\pi}\log T
        + 3.9
        + \frac{1.58}{\beta^2}
        + \Big(1 + \frac{2.89}{T}\Big)\frac{1}{\pi T}
        + O\Big(\frac{1}{\log q}\Big)
     \Big).
\]
Thus $F$ is small if $m$ is large enough, and~\eqref{eq:A25} is implied by
\[
(2\alpha+\delta)\log q  + \rho
>
(1 - F)^{-1}\Big(1.253 + \frac{2}{\pi\beta} + \frac{2}{\pi\beta^2} + \frac{4\cdot 2.89}{\pi\beta T}\Big)\log q + O(1),
\]
because $\eul(q)\log q\gg q$.
%
%
Thus it is sufficient to have
\[
2\alpha+\delta
\geq
\Big(1
     - \frac{1}{m}\Big(\frac{2}{\pi}\log T
                       + 3.9
                       + \frac{1.58}{\beta^2}
                       + \Big(1 + \frac{2.89}{T}\Big)\frac{1}{\pi T}
                  \Big)
\Big)^{-1}\Big(1.253 + \frac{2}{\pi\beta} + \frac{2}{\pi\beta^2} + \frac{4\cdot 2.89}{\pi\beta T}\Big),
\]
at the cost of increasing $\rho$.

In order to meet both requirements for $\beta$ we set
\begin{equation}\label{eq:A26}
\beta = \ell \log\Big(\frac{\sqrt{x}}{\eul(q)\log q}\Big),
\end{equation}
for a suitable constant $\ell>0$ that we will fix later. In this way we can set $\alpha=1/2$, and
$\delta$ will be close to $0.253$, specifically: $|\delta-0.253|\ll \frac{\log(\ell m)}{m} +
\frac{1}{\ell\log m}$. Obviously we are interested in producing small values for $m$. Thus, for a fixed
value of $\alpha$ and $\delta$ we select the value of $\ell$ producing the minimum $m$ such that
$(\delta,\ell,m)$ satisfies the requirements.\\
If one is interested mainly in the $q$ aspect, then one can select $\alpha = 1.253/2$; in this way
$\delta$ can be chosen arbitrarily small if $m$ and $\ell$ are large enough, and the value $\delta=0$ is
possible for every $\alpha > 1.253/2$. Possible choices are in Table~\ref{tab:A3}.

The previous argument has showed how we have to set $\beta$, and what we can expect to be able to prove.
However, in order to get a true proof we need to convert~\eqref{eq:A25} into something decreasing in $x$
when all other parameters are fixed, because only in this way can we prove the claim for all $x\geq x_0$
by testing it only in $x_0$.\\
We notice that according to our definitions both $\beta$ and $T$ increase as functions of $x$, at least
for $x\geq 10$. Moreover, setting $u:=\sqrt{x}$, one sees that $\frac{1}{u}\log^2T$ decreases if and only
if
\begin{gather*}
\frac{2\log T}{u}\frac{d\log T}{du} \leq \frac{\log^2T}{u^2}
\iff
2u\frac{d\log T}{du} \leq \log T
\iff
2[\frac{1}{\log u} + 1 - \frac{2\alpha}{(2\alpha \log u + \delta\log q+\rho)}] \leq \log T\\
\Leftarrow
2 + \frac{4}{\log x} \leq \log T.
\end{gather*}
For $x\geq 100$, this is true whenever $T\geq 20$. This suffices to prove that in this range $F(q,x)$
decreases as a function of $x$. Unfortunately this is false for $G$, thus we have to modify it into a new
$\overline{G}$ having a better behavior in $x$ and such that $\overline{G}\geq G$ so that
\begin{equation}\label{eq:A27}
(1 - F)(\alpha\log x + \delta\log q + \rho) > \overline{G}
\end{equation}
implies~\eqref{eq:A25}.\\
Firstly, we notice that for $x=:u^2$ moderately larger than $100$, the function $\frac{1}{u}\log^2T\log
u$ decreases as well. In fact, this happens if and only if
\begin{multline}\label{eq:A28}
\frac{2\log T\log u}{u}\frac{d\log T}{du} + \frac{\log^2T}{u^2} \leq \frac{\log^2T\log u}{u^2}
\iff
2u\frac{d\log T}{du} + \frac{\log T}{\log u} \leq \log T\\
\iff
2\Big[\frac{1}{\log u} + 1 - \frac{2\alpha}{(2\alpha \log u + \delta\log q+\rho)}\Big] \leq \Big[1 - \frac{1}{\log u}\Big]\log T
\Leftarrow
2\frac{\log x + 2}{\log x - 2} \leq \log T
\end{multline}
and for $x\geq 23000$ this is true whenever $T\geq 20$, once again.
%
%
This proves that in this range also $F(q,x)\log x$ decreases as a function of $x$. Secondly, recalling
our setting for $T$ and $\beta$, we see that
\begin{align*}
qT
=    \frac{q\ell\sqrt{x}}{\eul(q)}\frac{\log\big(\frac{\sqrt{x}}{\eul(q)\log q}\big)}{\alpha\log x + \delta \log q + \rho}
\leq \frac{q\ell\sqrt{x}}{2\alpha\eul(q)}.
\end{align*}
We use this bound to substitute $\log(qT)$ in $G$, producing
\[
\overline{G}(q,x)
:= \Big(1 + \frac{2}{\pi\beta} + \frac{2}{\pi\beta^2} + \frac{4\cdot 2.89}{\pi\beta T}\Big)\log\Big(\frac{q\ell\sqrt{x}}{2\alpha\eul(q)}\Big)
   + 0.253\log q + 2.
\]
With this $\overline{G}$, Inequality~\eqref{eq:A27} may be written as
\begin{multline}\label{eq:A29}
\Big(\alpha-\frac{1}{2}\Big)\log x + (1 - F)(\delta\log q + \rho)
>  \alpha F\log x
  + \Big(\frac{1}{\pi\beta} + \frac{1}{\pi\beta^2} + \frac{2\cdot 2.89}{\pi\beta T}\Big)\log x \\
  + \Big(1 + \frac{2}{\pi\beta} + \frac{2}{\pi\beta^2} + \frac{4\cdot 2.89}{\pi\beta T}\Big)\log\Big(\frac{q\ell}{2\alpha\eul(q)}\Big)
  + 0.253\log q + 2.
\end{multline}
When $\alpha\geq 1/2$, the function appearing on the left hand side increases in $x$ (whenever $x\geq
100$, $T\geq 20$), while the function on the right hand side decreases in $x$ (whenever $x\geq 23000$,
$T\geq 20$). This shows that if $x\geq 23000$ and $\alpha\geq 1/2$, we can check~\eqref{eq:A29} (and
hence~\eqref{eq:A27}, since they are equivalent) for $x\geq x_0$ by testing it for $x_0$.\\
We also have to satisfy the assumption
\begin{equation}\label{eq:A30}
\frac{1}{T}
   = \frac{\eul(q)}{\sqrt{x}}\frac{\alpha\log x + \delta\log q + \rho}{\ell\log(\frac{\sqrt{x}}{\eul(q)\log q})}
\leq \frac{1}{20},
\end{equation}
and, since we have assumed $h\leq 5x/6$ in several places, we need also
\begin{equation}\label{eq:A31}
\frac{h}{x} = \frac{\eul(q)}{\sqrt{x}}(\alpha\log x + \delta\log q + \rho)
           \leq \frac{5}{6},
\end{equation}
where again the functions appearing on the left hand sides decrease in $x$ (for $x\geq e^2$).

The combinations of values for the parameters $\alpha$, $\delta$, and $m$ in Table~\ref{tab:A3} are in
some sense unrealistic: they can be satisfied only for extremely large $q$. In order to have a claim
which could be proved for every $q$ we have to increase $m$ and choose $\rho$ accordingly. Our choices
are in Table~\ref{tab:A4}, and for every choice of the parameters appearing there we verify by direct
computation that all requirements are satisfied by $x = x_0(q) := (m\eul(q)\log q)^2$ when $3\leq q \leq
q_0$, with just a few exceptions which are in Table~\ref{tab:A5} and for which we have to test the claim
directly for $x\in [x_0(q),x(q)]$.

To deal with larger $q$'s, we set $x=x_0(q)=(m\eul(q)\log q)^2$ in~\eqref{eq:A27}, but, again, we have to
modify $F_0(q)=F(q,x_0(q))$ and $\overline{G}_0(q)=\overline{G}(q,x_0(q))$ in order to produce an
inequality which will hold for every $q\geq q_0$ when verified for $q_0$. For this purpose we introduce
\begin{align*}
\tilde{F}_0(q)
&:= \frac{1}{m}
    \Big(\frac{\log(q^2T_+)\log T_+}{\pi\log q}
        + 3.9
        + \frac{0.7}{\log q}
        + \frac{22.7}{q}
        + \frac{1.58\log(qT_+) + 16.08}{\beta_0^2\log q}
        + \Big(1 + \frac{2.89}{T_-}\Big)\frac{\log(qT_+)}{\pi T_-\log q}
   \Big)
\intertext{and}
\tilde{G}_0(q)
&:= \Big(1
        + \frac{2}{\pi\beta_0}
        + \frac{2}{\pi\beta_0^2}
        + \frac{4\cdot 2.89}{\pi\beta_0 T_-}
   \Big)\log\Big(\frac{\ell m q\log q}{2\alpha}\Big)
   + 0.253\log q + 2
\end{align*}
with
\[
\beta_0 := \ell\log m,
\qquad
T_- := \frac{\beta_0 m\log q}{2\alpha\log(m q\log q)+\delta\log q + \rho},
\qquad
T_+ := \frac{\beta_0 m}{2\alpha+\delta}.
\]
Then for $x=x_0(q)$ one has $\beta=\beta_0$, $T_-\leq T\leq T_+$, $\tilde{F}_0(q)\geq F_0(q)$ and
$\tilde{G}_0(q)\geq \overline{G}_0(q)$, so that~\eqref{eq:A27} for $x=x_0(q)$ holds for sure if
\begin{equation}\label{eq:A32}
(1 - \tilde{F}_0(q))(2\alpha\log(m q) + \delta\log q + \rho) \geq \tilde{G}_0(q).
\end{equation}
We notice that $\tilde{F}_0(q)$ and $1/T_-$ decrease in $q$, thus~\eqref{eq:A32} may be written as
\begin{multline*}
\Big((1 - \tilde{F}_0(q))(2\alpha + \delta)
     - \Big(1.253
            + \frac{2}{\pi\beta_0}
            + \frac{2}{\pi\beta_0^2}
            + \frac{4\cdot 2.89}{\pi\beta_0 T_-}
       \Big)
\Big)\log q
\geq
   \Big(1
        + \frac{2}{\pi\beta_0}
        + \frac{2}{\pi\beta_0^2}
        + \frac{4\cdot 2.89}{\pi\beta_0 T_-}
   \Big)\log\log q                           \\
 + (\tilde{F}_0(q)-1)(2\alpha\log m + \rho)
 + \Big(1
        + \frac{2}{\pi\beta_0}
        + \frac{2}{\pi\beta_0^2}
        + \frac{4\cdot 2.89}{\pi\beta_0 T_-}
   \Big)\log\Big(\frac{\ell m}{2\alpha}\Big)
   + 2
\end{multline*}
i.e., as
\begin{equation}\label{eq:A33}
A \log q - B \log\log q - C \geq 0
\end{equation}
where $A$ increases in $q$ and $B$ and $C$ decrease. The function on the left hand side is increasing in
$q$ when
\[
A' \log q - B' \log\log q + \frac{A}{q} - \frac{B}{q\log q} - C' > 0,
\]
and for this it is sufficient to have
\[
-\tilde{F}'_0(q)(2\alpha+\delta) \log q + \frac{A}{q} - \frac{B}{q\log q} > 0.
\]
Since
\begin{equation}\label{eq:A34}
-\tilde{F}'_0(q)
\geq \frac{S}{q\log^2 q}
\quad\text{with}\quad
S:=\frac{1}{m}\Big(\frac{\log^2 T_+}{\pi}
                   + 0.7
                   + \frac{1.58\log T_+ + 16.08}{\beta_0^2}
              \Big),
\end{equation}
in order to have a monotonous behavior of~\eqref{eq:A33} it is sufficient to have
\begin{equation}\label{eq:A35}
A\log q \geq B - S(2\alpha+\delta).
\end{equation}
In this way we see that if~\eqref{eq:A33} holds for a certain $q=q_0$ large enough to
satisfy~\eqref{eq:A35}, then it is proved for every $q\geq q_0$. Moreover, we notice that
inequalities~\eqref{eq:A30} and~\eqref{eq:A31} in $x_0(q)= (m\eul(q)\log q)^2$ are satisfied as soon as
\begin{equation}\label{eq:A36}
\frac{1}{T_-}
=    \frac{2\alpha\log(m q\log q) + \delta\log q + \rho}{\ell m\log m\log q}
\leq \frac{1}{20},
\end{equation}
and
\begin{equation}\label{eq:A37}
\frac{2\alpha\log(m q\log q) + \delta\log q + \rho}{m\log q}
\leq \frac{5}{6}.
\end{equation}
Thus (finally!) we have produced the test we were looking for: we search for a $q_0$
satisfying~\eqref{eq:A32}, \eqref{eq:A35}, \eqref{eq:A36} and~\eqref{eq:A37}. Then everything is proved
for $q\geq q_0$. Our computations show that the values of $q_0$ appearing in Table~\ref{tab:A4} pass this
test.

For $q\leq q_0$ and $x\in[x_0(q),x(q)]$ we use the mighty computer procedure~\texttt{Check1} described
below so that now the proof of the first claim of the theorem is complete.

\bigskip
%
For the second part of the theorem, i.e. the claim ensuring that if we increase $\alpha$ by one then
there are at least $\sqrt{x}$ primes $p=a\pmod{q}$ in $|p-x|\leq h$, we proceed in similar way. Indeed,
the inequality
\[
\log(x+h)\sum_{\smash[b]{\substack{|p-x|<h\\ p=a\Pmod{q}}}} 1
\geq \sum_{\smash[b]{\substack{|p-x|<h\\ p=a\Pmod{q}}}} \log p,
\]
allows to prove the claim by proving that the function appearing on the right hand side
of~\eqref{eq:A23} is larger than $\log(x+h)$. This amounts to modifying~\eqref{eq:A25} into
\[
(1 - F)((\alpha+1)\log x + \delta\log q + \rho) > G + \log(x+h),
\]
i.e. into
\[
(1 - F)(\alpha\log x + \delta\log q + \rho) > G + F\log x + \log(1+h/x),
\]
where $F$ and $G$ are defined as before (but with $\alpha+1$ instead of $\alpha$ in the definition of
$T$). We simplify the inequality recalling that we are assuming that $h/x \leq 5/6$. Moreover, we once
again use $\overline{G}$ instead of $G$ in order to get an inequality which is proved for all $x$ larger
than $x_0$ when it is proved for $x_0$: by~\eqref{eq:A28} this happens at least whenever $x\geq 23000$.
Thus it is sufficient to prove that
\begin{equation}\label{eq:A38}
(1 - F)(\alpha\log x + \delta\log q + \rho) > \overline{G} + F\log x + \log(11/6).
\end{equation}
Setting $x = x'_0(q)=(m'\eul(q)\log q)^2$, for a diverging $q$ the inequality becomes
\[
2(\alpha - (\alpha +1)F)\log(m'\eul(q)\log q)  + (1 - F)\delta\log q + O(1)
> \Big(1.253 + \frac{2}{\pi\beta} + \frac{2}{\pi\beta^2} + \frac{4\cdot 2.89}{\pi\beta T}\Big)\log q + O(1).
\]
If we assume that $F\leq \alpha/(\alpha+1)$, then the lower bound $\eul(q)\log q \geq q$ shows that this is
\[
(2\alpha + \delta - (2\alpha + \delta + 2)F)\log q + O(1)
> \Big(1.253 + \frac{2}{\pi\beta} + \frac{2}{\pi\beta^2} + \frac{4\cdot 2.89}{\pi\beta T}\Big)\log q + O(1),
\]
which forces us to select $\alpha$, $\delta$, $l$ and $m'$ in such a way that
\[
2\alpha + \delta - (2\alpha + \delta + 2)F
\geq 1.253 + \frac{2}{\pi\beta_0} + \frac{2}{\pi\beta_0^2} + \frac{4\cdot 2.89}{\pi\beta_0 T}
\]
with $\beta_0 = \ell'\log m'$, $T = \frac{\beta m'}{2\alpha+\delta}$ and
\[
F = \frac{1}{m'}\Big(\frac{2}{\pi}\log T
                 + 3.9
                 + \frac{1.58}{\beta_0^2}
                 + \Big(1 + \frac{2.89}{T}\Big)\frac{1}{\pi T}
            \Big).
\]
This implies that for the combinations of $\alpha$ and $\delta$ we have already considered before we have
to select for $\ell'$ and $m'$ the values in Table~\ref{tab:A3}.
As before, in order to get a statement provable for all $q$ we have to further increase $m'$, for which
we select the values in Table~\ref{tab:A4}. Now, for every choice of the parameters in Table~\ref{tab:A4}
we verify by direct computation that~\eqref{eq:A30}, \eqref{eq:A31} (substituting $\alpha$, $m$ and
$\ell$ with $\alpha+1$, $m'$ and $\ell'$) and~\eqref{eq:A38} are satisfied by $x = x'_0(q) :=
(m'\eul(q)\log q)^2$ when $3\leq q \leq q'_0$, with just a few exceptions which are in Table~\ref{tab:A6}
and for which we test the claim directly for $x\in [x'_0(q),x'(q)]$. This proves this part of the theorem
for $q\leq q'_0$.

To deal with larger $q$'s, we set $x=x'_0(q)=(m'\eul(q)\log q)^2$ in~\eqref{eq:A38}, but, again, we
substitute $F$ and $\overline{G}$ with $\tilde{F}_0(q)$ and $\tilde{G}_0(q)$, getting
\[
(1 - \tilde{F}_0(q))(2\alpha\log(m'\eul(q)\log q) + \delta\log q + \rho)
> \tilde{G}_0(q) + 2\tilde{F}_0(q)\log(m'\eul(q)\log q) + \log(11/6).
\]
Assuming
\begin{equation}\label{eq:A39}
\tilde{F}_0(q) \leq \alpha/(\alpha+1),
\end{equation}
the inequality is implied by
\begin{equation}\label{eq:A40}
(1 - \tilde{F}_0(q))(2\alpha\log(m'q) + \delta\log q + \rho) \geq \tilde{G}_0(q) + 2\tilde{F}_0(q)\log(m'q) + \log(11/6),
\end{equation}
which is what we get substituting $\eul(q)\log q$ with its upper bound $q$. We write this inequality as
\begin{align*}
\Big(2\alpha + \delta - (2\alpha + \delta + 2)\tilde{F}_0(q)
    -& \Big(1.253
            + \frac{2}{\pi\beta_0}
            + \frac{2}{\pi\beta_0^2}
            + \frac{4\cdot 2.89}{\pi\beta_0 T_-}
       \Big)
\Big)\log q                                  \\
\geq&
   \Big(1
        + \frac{2}{\pi\beta_0}
        + \frac{2}{\pi\beta_0^2}
        + \frac{4\cdot 2.89}{\pi\beta_0 T_-}
   \Big)\log\log q
 + (\tilde{F}_0(q)-1)(2\alpha\log m' + \rho) \\
&+ \Big(1
        + \frac{2}{\pi\beta_0}
        + \frac{2}{\pi\beta_0^2}
        + \frac{4\cdot 2.89}{\pi\beta_0 T_-}
   \Big)\log\Big(\frac{\ell m'}{2\alpha}\Big)
   + 2\tilde{F}_0(q)\log m'
   + 2
   + \log(11/6),
\end{align*}
i.e. as
\begin{equation}\label{eq:A41}
{\mathcal A} \log q - {\mathcal B} \log\log q - {\mathcal C} \geq 0
\end{equation}
where ${\mathcal A}$ increases in $q$ and ${\mathcal B}$ and ${\mathcal C}$ decrease. It is monotonous in
$q$ when
\[
{\mathcal A}' \log q - {\mathcal B}' \log\log q + \frac{{\mathcal A}}{q} - \frac{{\mathcal B}}{q\log q} - {\mathcal C}' > 0,
\]
and for this it is sufficient to have
\[
-\tilde{F}'_0(q)(2\alpha+\delta+2) \log q + \frac{{\mathcal A}}{q} - \frac{{\mathcal B}}{q\log q} > 0.
\]
By~\eqref{eq:A34}, in order to have a monotonous behavior of~\eqref{eq:A41} it is sufficient to have
\begin{equation}\label{eq:A42}
{\mathcal A}\log q \geq {\mathcal B} - S(2\alpha+\delta+2).
\end{equation}
In this way we see that if~\eqref{eq:A41} holds for a certain $q=q'_0$ large enough to
satisfy~\eqref{eq:A42}, then it is proved for every $q\geq q'_0$. Thus we have produced the test we were
looking for: we search for the $q'_0$ satisfying~\eqref{eq:A36}, \eqref{eq:A37} (substituting $\alpha$,
$m$ and $\ell$ with $\alpha+1$, $m'$ and $\ell'$) \eqref{eq:A39}, \eqref{eq:A40} and \eqref{eq:A42}. Then
everything is proved for $q\geq q'_0$. Our computations show that each $q'_0$ appearing in
Table~\ref{tab:A4} pass this test, so that also the proof of the second claim of the theorem is
completed.

For $q\leq q'_0$ and $x\in[x'_0(q),x'(q)]$ we use the mighty computer procedure~\texttt{CheckSqrt}
described below so that now the proof of the theorem is complete.
\begin{remark*}
The procedures~\texttt{Check1} and~\texttt{CheckSqrt} check more than what is needed: they detect
the existence of prime numbers in $[x-h,x]$ except for the initial $x$'s.
\end{remark*}

\section{Proof of Theorems~\ref{th:A2} and~\ref{th:A3}}
\label{sec:A6}
%
We keep the notations
\[
h = \eul(q)(\alpha\log x + \delta\log q + \rho)\sqrt{x},
\qquad
T = \frac{\beta}{\eul(q)}\frac{\sqrt{x}}{\alpha\log x + \delta\log q + \rho},
\]
but we make a different choice for $\beta$. In fact, the first two negative terms
$(1+\frac{2}{\pi\beta})\log(qT)$ in~\eqref{eq:A23}, up to terms of lower order in $\beta$, are
\[
\log \beta + \frac{\log(q^2 x)}{\pi\beta}.
\]
This expression reaches its minimum when
\[
\beta = \frac{1}{\pi} \log(q^2 x),
\]
which is how we set $\beta$ now. This choice puts restrictions on $\alpha$ and $\delta$: to control the
terms appearing in the equations below we need to have $\alpha \geq 1/2$, $\delta>0$ and $2\alpha +\delta
\geq 2$. Since we are interested in furnishing small values for $\alpha$  and $\delta$, this leaves us
with the range $\alpha \in [1/2,1)$ and $\delta = 2-2\alpha$. In this range we pick the case
$\alpha=1/2$, $\delta=1$, which is a natural choice; the interested reader will be able to complete the
similar computations needed for any other setting of $\alpha$ and $\delta$. Thus, our settings are:
\[
h = \eul(q)(\tfrac{1}{2}\log(q^2 x) + \rho)\sqrt{x},
\qquad
T = \frac{\sqrt{x}}{\pi\eul(q)}\frac{\log(q^2 x)}{\frac{1}{2}\log(q^2 x) + \rho}.
\]
As a consequence we have
\begin{align}
T &\leq \frac{2}{\pi}\frac{\sqrt{x}}{\eul(q)},                            \label{eq:A43}\\
\log(qT) &\leq \frac{1}{2}\log(q^2x) + \log\Big(\frac{2}{\pi\eul(q)}\Big),\label{eq:A44}\\
\frac{2\log(qT)}{\pi\beta} &\leq 1.                                       \label{eq:A45}
\end{align}
Moreover,
\begin{align}
- \sum_{p|q}\frac{\log p}{p-1} - \log(\eul(q))
= - \log q - \sum_{p|q}\Big[\frac{\log p}{p-1} + \log\Big(1-\frac{1}{p}\Big)\Big]
\leq -\log q.                                                             \label{eq:A46}
\end{align}
%
%
The function appearing on the right hand side of~\eqref{eq:A23} is surely positive when
\begin{align*}
  \frac{1}{2}\log(q^2x) + \rho
\geq&
   \log(qT)
 + \frac{2}{\pi\beta}\log(qT)
 + 0.253\log q
 + 1.53
 - \sum_{p|q} \frac{\log p}{p-1}
 + \frac{1.638}{\eul(q)}                                                                         \\
&+ \frac{2\log(2\pi)}{\pi\beta}\Big(1 - \frac{1}{\beta}\Big)
 + \frac{2}{\pi\beta^2}\log(qT)
 + \frac{4\cdot 2.89}{\pi\beta}\frac{\log(qT)}{T}                                                \\
&+ \Big(\frac{1}{\pi}\log(q^2T)\log T + 3.9\log q  + 0.7 + \frac{22.7}{\eul(q)}\Big)\frac{h}{x}
 + (1.58\log(qT) + 16.08)\frac{h}{\beta^2x}                                                      \\
&+ 3.7 x^{-1/6}
 + \omega(q)\frac{\log(2x)}{\sqrt{x}}
 + \frac{h}{\pi x}\Big(1 + \frac{2.89}{T}\Big)\frac{\log(qT)}{T}
 + 1.7\frac{h}{x^{3/2}}.
\end{align*}
Using~\eqref{eq:A44} for the first $\log(qT)$, \eqref{eq:A45} for the terms $\frac{1}{\beta}\log(qT)$,
\eqref{eq:A43} for $\log(q^2T)\log T$, and~\eqref{eq:A46}, we deduce that it is sufficient to have

\begin{align*}
\rho
\geq&
   \log\Big(\frac{2}{\pi}\Big)
 + 1
 - 0.747\log q + 1.53 + \frac{1.638}{\eul(q)}
 + \frac{2\log(2\pi)}{\pi\beta}\Big(1 - \frac{1}{\beta}\Big)
 + \frac{1}{\beta}                                                                              \\
&+ \frac{2\cdot 2.89}{T}
 + \Big(\frac{1}{\pi}\log\Big(\frac{2}{\pi}\frac{q^2\sqrt{x}}{\eul(q)}\Big)\log\Big(\frac{2}{\pi}\frac{\sqrt{x}}{\eul(q)}\Big)
        + 3.9\log q  + 0.7 + \frac{22.7}{\eul(q)}
   \Big)\frac{h}{x}                                                                             \\
&+ \Big(\frac{1.58 \pi}{2} + \frac{16.08}{\beta}\Big)\frac{h}{\beta x}
 + \frac{3.7}{x^{1/6}}
 + \omega(q)\frac{\log(2x)}{\sqrt{x}}
 + \frac{h}{\pi x}\Big(1 + \frac{2.89}{T}\Big)\frac{\log(qT)}{T}
 + 1.7\frac{h}{x^{3/2}}.
\end{align*}
In several places we have assumed $T\geq 20$, thus we can use this assumption to note that it implies
\[
\frac{1}{\pi}\Big(1 + \frac{2.89}{T}\Big)\frac{\log(qT)}{T}
\leq 0.02\log q + 0.06.
\]
%
%
We further assume $x\geq (8\eul(q)\log q\log\log q)^2$ to bound
\begin{equation}\label{eq:A47}
   \log\Big(\frac{2}{\pi}\Big)
 + 2.53
 + \frac{1.638}{\eul(q)}
 + \frac{2\log(2\pi)}{\pi\beta}\Big(1 - \frac{1}{\beta}\Big)
 + \frac{1}{\beta}
 + \frac{2\cdot 2.89}{T}
 + \frac{3.7}{x^{1/6}}
 + \omega(q)\frac{\log(2x)}{\sqrt{x}}
 + 1.7\frac{h}{x^{3/2}}
\end{equation}
%
with ${\mathcal E}(q)$, which is $9.3$ when $q\leq 12$ and $4$ otherwise.
%
%
%
%
%
Hence it is sufficient to have
\begin{equation}\label{eq:A48}
\begin{aligned}
\rho
\geq&
   {\mathcal E}(q)
 - 0.747\log q
\\
&+ \Big(\frac{1}{\pi}\log\Big(\frac{2}{\pi}\frac{q^2\sqrt{x}}{\eul(q)}\Big)\log\Big(\frac{2}{\pi}\frac{\sqrt{x}}{\eul(q)}\Big)
        + 3.92\log q + 0.76 + \frac{22.7}{\eul(q)}
   \Big)\frac{h}{x}
 + \Big(\frac{1.58 \pi}{2} + \frac{16.08}{\beta}\Big)\frac{h}{\beta x}.
\end{aligned}
\end{equation}
%
%
Recalling the definitions of $h$ and $\beta$, \eqref{eq:A48} becomes:
\begin{equation}\label{eq:A49}
(1-F(q,x))\rho \geq G(q,x)
\end{equation}
with
\begin{align*}
F(q,x):=&\Big(\frac{1}{\pi}\log\Big(\frac{2}{\pi}\frac{q^2\sqrt{x}}{\eul(q)}\Big)\log\Big(\frac{2}{\pi}\frac{\sqrt{x}}{\eul(q)}\Big)
              + 3.92\log q + 0.76 + \frac{22.7}{\eul(q)}
         \Big)\frac{\eul(q)}{\sqrt{x}}                                                                  \\
        &+ \Big(\frac{1.58}{2} + \frac{16.08}{\log(q^2x)}\Big)\frac{\pi^2 \eul(q)}{\log(q^2x)\sqrt{x}}, \\
G(q,x):=& {\mathcal E}(q)
         + \Big(\frac{1}{\pi}\log\Big(\frac{2}{\pi}\frac{q^2\sqrt{x}}{\eul(q)}\Big)\log\Big(\frac{2}{\pi}\frac{\sqrt{x}}{\eul(q)}\Big)
                + 3.92\log q
           \Big)\log(q\sqrt{x})\frac{\eul(q)}{\sqrt{x}}
         - 0.747\log q                                                                                  \\
        &+ \Big(0.76 + \frac{22.7}{\eul(q)}\Big)\log(q^2x)\frac{\eul(q)}{2\sqrt{x}}
         + \Big(\frac{1.58}{2} + \frac{16.08}{\log(q^2x)}\Big)\frac{\pi^2\eul(q)}{2\sqrt{x}}.
\end{align*}
%
%
%
We notice that $F(q,x)$ and $G(q,x)$ decrease as a function of $x$ (hence there is no need to change $G$,
in this case), at least for $x\geq e^6 = 403.42\ldots$. Thus, if~\eqref{eq:A49} holds for fixed $\rho$
and $q$, for a given $x_0(q)$, then it holds for any $x\geq x_0(q)$ for the same $\rho$ and $q$.\\
Moreover we have to satisfy the assumptions
\begin{align}
\frac{1}{T} &= \frac{h}{\beta x}
            = \frac{\pi \eul(q)}{\sqrt{x}}\Big(\frac{1}{2}+\frac{\rho}{\log(q^2x)}\Big)
            \leq \frac{1}{20}                                                  \label{eq:A50}
\intertext{and}
\frac{h}{x} &= \frac{\eul(q)}{\sqrt{x}}\Big(\frac{1}{2}\log(q^2x)+\rho\Big)
           \leq \frac{5}{6},                                                   \label{eq:A51}
\end{align}
where again the functions appearing on the left hand side decrease in $x$.\\
We verify by direct computation that all these requirements are satisfied for $\rho=15$ by any $x\geq
x_0(q)$ with $x_0(q)$ given in Table~\ref{tab:A7}, when $q\leq 660$. For this purpose, we use a variant
of Procedure~\texttt{Check1}.

To deal with larger $q$'s, we choose $x_0(q):= (m\eul(q)\ell(q))^2$, where we set $\ell(q):=\log
q\log\log q$ to simplify the notation. To select a suitable value for $m$ we note that
$G_0(q):=G(q,x_0(q))$ stays bounded if and only if
\[
\frac{1}{\pi}\log\Big(\frac{q^2\sqrt{x_0(q)}}{\eul(q)}\Big)\log\Big(\frac{\sqrt{x_0(q)}}{\eul(q)}\Big)
\log(q\sqrt{x_0(q)})\frac{\eul(q)}{\sqrt{x_0(q)}}
- 0.747\log q
\]
is bounded, and that this happens if and only if $\frac{4}{\pi m} < 0.747$.
%
%
This shows that any $m$ larger than $2$, say, is allowed when $q\geq q_0(m)$ is large enough. With this
choice of $x_0(q)$, inequalities~\eqref{eq:A50} and~\eqref{eq:A51} are satisfied as soon as
\begin{equation}\label{eq:A52}
\frac{\pi}{m\ell(q)}\Big(\frac{1}{2}+\frac{\rho/2}{\log(m q\eul(q)\ell(q))}\Big)
\leq \frac{1}{20},
\end{equation}
and
\begin{equation}\label{eq:A53}
\frac{1}{m\ell(q)}\big(\log(mq\eul(q)\ell(q))+\rho\big)
\leq \frac{5}{6}.
\end{equation}
To deal with~\eqref{eq:A49}, \eqref{eq:A52} and \eqref{eq:A53} for arbitrary $q$ we substitute there the
arithmetical function $\eul(q)$ with its upper bound $q$ or its lower bound $\sqrt{q}$ in order to
produce in any case upper-bounds $\tilde{F}_0(q)$ and $\tilde{G}_0(q)$ for $F_0(q):=F(q,x_0(q))$ and
$G_0(q)$ respectively, and for the function to the left hand side of~\eqref{eq:A52}. In this
way~\eqref{eq:A49} changes into
\begin{equation}\label{eq:A54}
(1-\tilde{F}_0)\rho \geq \tilde{G}_0.
\end{equation}
As for Theorem~\ref{th:A1}, functions $\tilde{F}_0$ and those we get from~\eqref{eq:A52}
and~\eqref{eq:A53} are decreasing in $q$, while this remains false for $\tilde{G}_0$. However, contrary
to the situation for Theorem~\ref{th:A1} the parameters $\alpha$ ($=\frac{1}{2}$), $\delta$ ($=1$) and
$\rho$ ($=15$) are now fixed, thus we can verify directly that $\rho \geq -\tilde{G}_0'/\tilde{F}_0'$
for any $q\geq 3$ and any integer $8\leq m\leq 20$. This shows that for these parameters
$(1-\tilde{F}_0)\rho - \tilde{G}_0$ is increasing in the full range for $q$.

In this way we can conclude that when $8\leq m\leq 20$ all conditions we have to test become monotonous
in their dependence of $x$ and $q$, so that we can prove them for $x\geq x_0(q)$ and $q\geq q_0(m)$ by
proving them for $x=x_0(q)$ and $q=q_0(m)$. We have collected some results in Table~\ref{tab:A8}, for
several values of $m$. We see that the value $m=8$ produces a small enough $q_0(m)$, hence we have
selected it, as reported in
Theorem~\ref{th:A2}.
To complete the proof of Theorem~\ref{th:A2} we still need to test the claim for $3\leq q<660$ and $x$
in the interval $[(8\eul(q)\ell(q))^2,x_0(q)]$ with $x_0(q)$ given in Table~\ref{tab:A7}. For
this purpose we use an analogue of Procedure~{\tt Check}.

\medskip

%
For the second part of the theorem it is sufficient to prove that the right hand side of~\eqref{eq:A23}
is larger than $\log(x+h)$ when we increasing $h$ to $h + \eul(q)\sqrt{x}\log x$. This
modifies~\eqref{eq:A49} into
\[
(1-F(q,x))\rho \geq G(q,x) + F(q,x)\log x + \log(1+5/6) =:G_s(q,x).
\]
%
%
We proceed as before. In fact, both sides are decreasing as a function of $x$. Thus, we verify by direct
computation that all these requirements are satisfied for $\rho=15$ by any $x\geq x'_0(q)$ with
$x'_0(q)$ given in Table~\ref{tab:A7}, when $q\leq 1320$.

Again, we choose $x_0(q):= (m'\eul(q)\ell(q))^2$, producing
\begin{equation}\label{eq:A55}
(1-F(q,x'_0(q)))\rho
\geq G(q,x'_0(q)) + F(q,x'_0(q))\log(x'_0(q)) + \log(1+5/6) = G_s(q,x'_0(q)).
\end{equation}
In order to have $G_s(q,x_0(q))$ bounded it is necessary that $\frac{8}{\pi m'} < 0.747$, thus any $m'\geq
4$ suffices. With this choice of $x'_0(q)$, inequalities~\eqref{eq:A50} and~\eqref{eq:A51} are satisfied as
soon as
\begin{equation}\label{eq:A56}
\frac{\pi}{m'\ell(q)}
\Big(\frac{1}{2}
     +\frac{\rho/2+\log(m'\eul(q)\ell(q))}{\log(m' q\eul(q)\ell(q))}
\Big)
\leq \frac{1}{20},
\end{equation}
and
\begin{equation}\label{eq:A57}
\frac{1}{m'\ell(q)}\big(\log(m'q\eul(q)\ell(q))+2\log(m'\eul(q)\ell(q))+\rho\big)
\leq \frac{5}{6}.
\end{equation}
To deal with~\eqref{eq:A55}, \eqref{eq:A56} and \eqref{eq:A57} for arbitrary $q$ we substitute there
the arithmetical function $\eul(q)$ with its upper bound $q$ or its lower bound $\sqrt{q}$ in order to
produce in any case upper-bounds $\tilde{F}(q,x_0(q))$ and $\tilde G_s(q,x_0(q))$ for $F(q,x_0(q))$ and
$G_s(q,x_0(q))$ respectively, and for the function on the left hand side of~\eqref{eq:A56}.
In this way~\eqref{eq:A55} changes into
\[
(1-\tilde{F})\rho \geq \tilde G_s.
\]
Functions $\tilde{F}$, and those we get from~\eqref{eq:A56} and~\eqref{eq:A57} are evidently decreasing
in $q$, but this is still false for $\tilde G_s$. However, $(1-\tilde{F})\rho - \tilde G_s$ is decreasing
if and only if $\rho \geq -\tilde G_s'/\tilde{F}'$ and for $\rho=15$ this holds for any $q\geq 3$ if
$m'\geq 10$.
In this way we can conclude that when $m'\geq 10$ all conditions we have to test become monotonous in
their dependence of $x$ and $q$, so that we can prove them for $x\geq x'_0(q)$ and $q\geq q'_0(m')$ by
proving them for $x=x'_0(q)$ and $q=q'_0(m')$. We have collected some results in Table~\ref{tab:A8}, for
several values of $m'$. Unfortunately, the computations show that any value of $m'$ smaller than $15$
would produce an extremely large $q_0(m')$. As a consequence we have selected $m'=15$, as reported in
Theorem~\ref{th:A2}.

\bigskip

%
Lastly, it is easy to prove that $F(q,e^q)$ is smaller than $1$ for $q\geq 10$ and that $G(q,e^q)\leq 0$
for $q\geq 220$, and $G_s(q,e^q)\leq 0$ for $q\geq 500$ and this proves Theorem~\ref{th:A3} with $q\geq
220$ for the first claim and $q\geq 500$ for the second.
%
%
%
%
%
%
%
The first (second) claim is extended to $q\geq 35$ ($q\geq 67$, respectively) keeping the true value
of~\eqref{eq:A47} in place of $\mathcal{E}(q)$ in the definition of $G(q,x)$.
%
%
%
%

\section{Proof of Corollary~\ref{cor:A1}}
\label{sec:A7}
We can assume $q\geq 3$, because the claim for $q=1$ and $q=2$ follows from the analogous (and stronger)
claim proved in~\cite[Cor.~4.1]{DudekGrenieMolteni1}.\\
By Theorem~\ref{th:A1} (case $\alpha=1/2$, $\delta=1$) we know that there is a prime congruent to $a$
modulo $q$ as soon as
\[
(2n+\eul(q)A)\eul(q)A
\geq \eul(q)\sqrt{M}(24+\log(q^2M))
\]
where $A:=12+2\log(qn)$ and $M := \frac{1}{2}[n^2 + (n+\eul(q)A)^2]$.
Dividing by $nA$ and setting $B:=\frac{\eul(q)}{n}A$, the inequality becomes
\[
2+B
\geq \sqrt{1+B+\frac{B^2}{2}}\Big(1 + \frac{12+\log(1+B+\frac{B^2}{2})}{A}\Big),
\]
i.e.,
\[
\sqrt{\frac{4+4B+2B^2}{4+4B+B^2}}\Big(1 + \frac{12+\log(1+B+\frac{B^2}{2})}{A}\Big) \leq 2.
\]
Set $H:=\sqrt{\frac{4+4B+2B^2}{4+4B+B^2}}$, and notice that it is an increasing function of $B$, and is
bounded by $\sqrt{2}$. Hence the inequality may be written as
\[
1+B+\frac{B^2}{2}\leq \exp\Big(A\Big(\frac{2}{H}-1\Big) - 12\Big).
\]
In terms of $B$ this is solved by
\[
B \leq \Big[2\exp\Big(A\Big(\frac{2}{H}-1\Big) - 12\Big)-1\Big]^{1/2} - 1,
\]
but needs $2\exp(A\big(\frac{2}{H}-1\big) - 12)\geq 1$.
Recalling the definition of $B$, it means that
\[
\eul(q) \leq \frac{n}{A}\Big[\Big[2\exp\Big(A\Big(\frac{2}{H}-1\Big) - 12\Big)-1\Big]^{1/2} - 1\Big].
\]
Recalling the definition of $A$, we see that for every fixed value of $q$, the quotient $n/A$ increases
with $n$. Hence $B=\eul(q)\frac{A}{n}$ decreases with $n$, and $1/H$ (which decreases with $B$) increases
with $n$. This shows that the function appearing on the right hand side increases as a function of $n$,
for every fixed $q$, if $A(2/H-1)\geq 12$. As a consequence the inequalities hold true for $n\geq n_0$ as
soon they hold for $n=n_0$. It is easy to prove that for $n\geq 8\eul(q)\log q$ they hold for all $q\geq
3$.

\newpage
\section{Auxiliary tables}
\label{sec:A8}
\begin{table}[H]
\caption{Parameters for $q\to\infty$.}\label{tab:A3}
\centering
{
\begin{tabular}{|rrrr|rr||rrrr|rr|}
  \toprule
 \mltc{1}{|c}{$\alpha$}&\mltc{1}{c}{$\delta$}&\mltc{1}{c}{$m$}&\mltc{1}{c|}{$\ell$}&\mltc{1}{c}{$m'$}&\mltc{1}{c||}{$\ell'$}
&\mltc{1}{|c}{$\alpha$}&\mltc{1}{c}{$\delta$}&\mltc{1}{c}{$m$}&\mltc{1}{c|}{$\ell$}&\mltc{1}{c}{$m'$}&\mltc{1}{c||}{$\ell'$}\\
  \midrule
    1/2 & 1   &    21 &  7 &   44 &  6 & 1.253/2 & 0.1 &   142 &   17 &   373 &   17 \\
    1/2 & 1/2 &    56 &  7 &  139 &  7 &   1     & 0   &    21 &    7 &    44 &    6 \\
    1/2 & 1/3 &   179 & 24 &  475 & 21 &   0.9   & 0   &    27 &    7 &    60 &    5 \\
1.253/2 & 1   &    17 &  8 &   34 &  6 &   0.8   & 0   &    40 &    7 &    97 &    5 \\
1.253/2 & 1/2 &    29 &  6 &   66 &  5 &   0.7   & 0   &    95 &   11 &   245 &   10 \\
1.253/2 & 0.2 &    69 &  9 &  175 &  8 &   0.627 & 0   & 21236 & 1652 & 57287 & 1310 \\
  \bottomrule
\end{tabular}
}
\end{table}

\begin{table}[H]
\caption{Parameters}\label{tab:A4}
\centering
\begin{tabular}{|rrrrrr|rrr|}
  \toprule
 \mltc{1}{|c}{$\alpha$}&\mltc{1}{c}{$\delta$}&\mltc{1}{c }{$\rho$}
&\mltc{1}{ c}{$m$}     &\mltc{1}{c}{$\ell$}  &\mltc{1}{c|}{$q_0$}
&\mltc{1}{ c}{$m'$}    &\mltc{1}{c}{$\ell'$} &\mltc{1}{c|}{$q'_0$}\\
  \midrule
 1/2     & 1   & 12 &    23    &    6.4&    1947657 &             46 &   5.3&   1984065\\
 1/2     & 1/2 &  9 &    86    &   14  &     443235 &            188 &  11  &   2974713\\
 1/2     & 1/3 &  9 &  1500    &  120  &    2293436 &           3500 & 190  &   2711303\\

 1.253/2 & 1   & 14 &    18    &    7  &    7991888 &             34 &   5.7&   6306843\\
 1.253/2 & 1/2 &  9 &    34    &    7  &    3055181 &             74 &   6  &    920941\\
 1.253/2 & 0.2 &  7 &   110    &   18  &    3287890 &            260 &  15  &   3790727\\
 1.253/2 & 0.1 &  7 &   500    &   64  &    2878356 &           1500 &  66  &    999372\\

 1       & 0   &  8 &    23    &    6.4&    1972765 &             46 &   5.3&  2001416\\
 0.9     & 0   &  7 &    31    &    6  &    2617343 &             66 &   5  &  1294983\\
 0.8     & 0   &  6 &    52    &    9  &    1987447 &            120 &   8  &   630195\\
 0.7     & 0   &  5 &   200    &   16  &    1713915 &            500 &  26  &   958214\\
 0.627   & 0   & 10 &$10^{10}$ & 3480  & $10^{438}$ &       $10^{10}$&4100  &$10^{438}$\\
  \bottomrule
\end{tabular}
\end{table}

\newpage
{
\begin{table}[H]
\caption{Exceptions: for these $q$'s the claim has to be tested in $[x_0(q),x(q)]$}\label{tab:A5}
\centering
\begin{tabular}{|rrr|rrr|rrr|}
  \toprule
 \mltc{9}{|c|}{$\alpha=1/2$, $\delta=1$, $\rho=12$, $m=23$, $\ell=6.4$}\\
 \mltc{1}{|c}{$q$}&\mltc{1}{c}{$x_0(q)$}&\mltc{1}{c|}{$x(q)$}&
 \mltc{1}{ c}{$q$}&\mltc{1}{c}{$x_0(q)$}&\mltc{1}{c|}{$x(q)$}&
 \mltc{1}{ c}{$q$}&\mltc{1}{c}{$x_0(q)$}&\mltc{1}{c|}{$x(q)$} \\
   3 &   2553 & 23000 &   6 &   6793 & 23000 &      9 & 91940 &  94714\\
   4 &   4066 & 23000 &   7 &  72111 & 81124 &     10 & 44875 &  55094\\
   5 &  21924 & 37494 &   8 &  36598 & 51147 &     12 & 52263 &  60595\\
  \midrule
 \mltc{9}{|c|}{$\alpha=1/2$, $\delta=1/2$, $\rho=9$, $m=86$, $\ell=14$}\\
 \mltc{1}{|c}{$q$}&\mltc{1}{c}{$x_0(q)$}&\mltc{1}{c|}{$x(q)$}&
 \mltc{1}{ c}{$q$}&\mltc{1}{c}{$x_0(q)$}&\mltc{1}{c|}{$x(q)$}&
 \mltc{1}{ c}{$q$}&\mltc{1}{c}{$x_0(q)$}&\mltc{1}{c|}{$x(q)$} \\
   3 & 35706 &  77348 &   4 & 56854 & 95500 &      6 & 94976 &  104272\\
  \midrule
 \mltc{9}{|c|}{$\alpha=1/2$, $\delta=1/3$, $\rho=9$, $m=1500$, $\ell=120$: no exceptions}\\
  \midrule
 \mltc{9}{|c|}{$\alpha=1.253/2$, $\delta=1$, $\rho=14$, $m=18$, $\ell=7$}\\
 \mltc{1}{|c}{$q$}&\mltc{1}{c}{$x_0(q)$}&\mltc{1}{c|}{$x(q)$}&
 \mltc{1}{ c}{$q$}&\mltc{1}{c}{$x_0(q)$}&\mltc{1}{c|}{$x(q)$}&
 \mltc{1}{ c}{$q$}&\mltc{1}{c}{$x_0(q)$}&\mltc{1}{c|}{$x(q)$} \\
   3 &    1564 & 174459  &   6 &   4160 &    23000 &      9 & 56311 & 59241 \\
   4 &    2490 & 190024  &   7 &  44166 &    50277 &     10 & 27485 & 35009 \\
   5 &   13428 & 565474  &   8 &  22416 &    31807 &     12 & 32009 & 38677 \\
  \midrule
 \mltc{9}{|c|}{$\alpha=1.253/2$, $\delta=1/2$, $\rho=9$, $m=34$, $\ell=7$}\\
 \mltc{1}{|c}{$q$}&\mltc{1}{c}{$x_0(q)$}&\mltc{1}{c|}{$x(q)$}&
 \mltc{1}{ c}{$q$}&\mltc{1}{c}{$x_0(q)$}&\mltc{1}{c|}{$x(q)$}&
 \mltc{1}{ c}{$q$}&\mltc{1}{c}{$x_0(q)$}&\mltc{1}{c|}{$x(q)$} \\
   3 &   5580 &   24333  &  5 &  47910 &  62458 &      8 & 79978 &    87897\\
   4 &   8886 &   29766  &  6 &  14844 &  34684&&&\\
  \midrule
 \mltc{9}{|c|}{$\alpha=1.253/2$, $\delta=0.2$, $\rho=7$, $m=110$, $\ell=18$}\\
 \mltc{1}{|c}{$q$}&\mltc{1}{c}{$x_0(q)$}&\mltc{1}{c|}{$x(q)$}&
 \mltc{1}{ c}{$q$}&\mltc{1}{c}{$x_0(q)$}&\mltc{1}{c|}{$x(q)$}&
 \mltc{1}{ c}{$q$}&\mltc{1}{c}{$x_0(q)$}&\mltc{1}{c|}{$x(q)$} \\
   3 &  58416 & 136773  &      4 &  93015 &  176298 &      6 &  155383 &  196485\\
  \midrule
 \mltc{9}{|c|}{$\alpha=1.253/2$, $\delta=0.1$, $\rho=7$, $m=500$, $\ell=64$: no exceptions}\\
  \midrule
 \mltc{9}{|c|}{$\alpha=1$, $\delta=0$, $\rho=8$, $m=23$, $\ell=6.4$}\\
 \mltc{1}{|c}{$q$}&\mltc{1}{c}{$x_0(q)$}&\mltc{1}{c|}{$x(q)$}&
 \mltc{1}{ c}{$q$}&\mltc{1}{c}{$x_0(q)$}&\mltc{1}{c|}{$x(q)$}&
 \mltc{1}{ c}{$q$}&\mltc{1}{c}{$x_0(q)$}&\mltc{1}{c|}{$x(q)$} \\
   3 &     2553 &      23000  &      6 &     6793 &      23000 &     10 &   44875 &      52243\\
   4 &     4066 &      23000  &      8 &    36598 &      47072 &     12 &   52263 &      58690\\
   5 &    21924 &      32725  &&&&&&\\
  \midrule
 \mltc{9}{|c|}{$\alpha=0.9$, $\delta=0$, $\rho=7$, $m=31$, $\ell=6$}\\
 \mltc{1}{|c}{$q$}&\mltc{1}{c}{$x_0(q)$}&\mltc{1}{c|}{$x(q)$}&
 \mltc{1}{ c}{$q$}&\mltc{1}{c}{$x_0(q)$}&\mltc{1}{c|}{$x(q)$}&
 \mltc{1}{ c}{$q$}&\mltc{1}{c}{$x_0(q)$}&\mltc{1}{c|}{$x(q)$} \\
   3 &   4639 &      23000  &      5 &  39828 &      47524 &     8 &  66487 &      69419\\
   4 &   7387 &      23032  &      6 &  12340 &      28176 &     &&\\
  \midrule
 \mltc{9}{|c|}{$\alpha=0.8$, $\delta=0$, $\rho=6$, $m=52$, $\ell=9$}\\
 \mltc{1}{|c}{$q$}&\mltc{1}{c}{$x_0(q)$}&\mltc{1}{c|}{$x(q)$}&
 \mltc{1}{ c}{$q$}&\mltc{1}{c}{$x_0(q)$}&\mltc{1}{c|}{$x(q)$}&
 \mltc{1}{ c}{$q$}&\mltc{1}{c}{$x_0(q)$}&\mltc{1}{c|}{$x(q)$} \\
   3 &   13054&      45973  &      5 &  112066&     116443 &     6 &   34723&      70349\\
   4 &   20786&      58793  &&&&&&\\
  \midrule
 \mltc{9}{|c|}{$\alpha=0.7$, $\delta=0$, $\rho=5$, $m=200$, $\ell=16$}\\
 \mltc{1}{|c}{$q$}&\mltc{1}{c}{$x_0(q)$}&\mltc{1}{c|}{$x(q)$}&
 \mltc{1}{ c}{$q$}&\mltc{1}{c}{$x_0(q)$}&\mltc{1}{c|}{$x(q)$}&
 \mltc{1}{ c}{$q$}&\mltc{1}{c}{$x_0(q)$}&\mltc{1}{c|}{$x(q)$} \\
   3 &   193111&     283439 &     4 & 307489&     391345 &&&\\
  \bottomrule
\end{tabular}
\end{table}
}

{\scriptsize
\begin{table}[H]
\caption{Exceptions: for these $q$'s the claim has to be tested in $[x'_0(q),x'(q)]$}\label{tab:A6}
\centering
\begin{tabular}{|rrr|rrr|rrr|}
  \toprule
 \mltc{9}{|c|}{$\alpha=1/2$, $\delta=1$, $\rho=12$, $m'=46$, $\ell'=5.3$}\\
 \mltc{1}{|c}{$q$}&\mltc{1}{c}{$x'_0(q)$}&\mltc{1}{c|}{$x'(q)$}&
 \mltc{1}{ c}{$q$}&\mltc{1}{c}{$x'_0(q)$}&\mltc{1}{c|}{$x'(q)$}&
 \mltc{1}{ c}{$q$}&\mltc{1}{c}{$x'_0(q)$}&\mltc{1}{c|}{$x'(q)$} \\
    3 &  10215 & 28413    &     4 & 16266 & 33887    &    6 & 27172 & 39233\\
  \midrule
 \mltc{9}{|c|}{$\alpha=1/2$, $\delta=1/2$, $\rho=9$, $m'=188$, $\ell'=11$: no exceptions}\\
  \midrule
 \mltc{9}{|c|}{$\alpha=1/2$, $\delta=1/3$, $\rho=9$, $m'=3500$, $\ell'=190$: no exceptions}\\
  \midrule
 \mltc{9}{|c|}{$\alpha=1.253/2$, $\delta=1$, $\rho=14$, $m'=34$, $\ell'=5.7$}\\
 \mltc{1}{|c}{$q$}&\mltc{1}{c}{$x'_0(q)$}&\mltc{1}{c|}{$x'(q)$}&
 \mltc{1}{ c}{$q$}&\mltc{1}{c}{$x'_0(q)$}&\mltc{1}{c|}{$x'(q)$}&
 \mltc{1}{ c}{$q$}&\mltc{1}{c}{$x'_0(q)$}&\mltc{1}{c|}{$x'(q)$} \\
   3 &  5580 &     23000    &     4 &  8886 &      23000  &     6 & 14844 &  23000\\
  \midrule
 \mltc{9}{|c|}{$\alpha=1.253/2$, $\delta=1/2$, $\rho=9$, $m'=74$, $\ell'=6$}\\
 \mltc{1}{|c}{$q$}&\mltc{1}{c}{$x'_0(q)$}&\mltc{1}{c|}{$x'(q)$}&
 \mltc{1}{ c}{$q$}&\mltc{1}{c}{$x'_0(q)$}&\mltc{1}{c|}{$x'(q)$}&
 \mltc{1}{ c}{$q$}&\mltc{1}{c}{$x'_0(q)$}&\mltc{1}{c|}{$x'(q)$} \\
   3 &  26437 &     53359    &     4 &  42095 &      65485  &     6 & 70320 &  76541\\
  \midrule
 \mltc{9}{|c|}{$\alpha=1.253/2$, $\delta=0.2$, $\rho=7$, $m'=260$, $\ell'=15$: no exceptions}\\
  \midrule
 \mltc{9}{|c|}{$\alpha=1.253/2$, $\delta=0.1$, $\rho=7$, $m'=1500$, $\ell'=66$: no exceptions}\\
  \midrule
 \mltc{9}{|c|}{$\alpha=1$, $\delta=0$, $\rho=8$, $m'=46$, $\ell'=5.3$}\\
 \mltc{1}{|c}{$q$}&\mltc{1}{c}{$x'_0(q)$}&\mltc{1}{c|}{$x'(q)$}&
 \mltc{1}{ c}{$q$}&\mltc{1}{c}{$x'_0(q)$}&\mltc{1}{c|}{$x'(q)$}&
 \mltc{1}{ c}{$q$}&\mltc{1}{c}{$x'_0(q)$}&\mltc{1}{c|}{$x'(q)$} \\
   3 &  10215&      26091   &      4 &  16266&      32379  &      6 & 27172&      39992\\
  \midrule
 \mltc{9}{|c|}{$\alpha=0.9$, $\delta=0$, $\rho=7$, $m'=66$, $\ell'=5$}\\
 \mltc{1}{|c}{$q$}&\mltc{1}{c}{$x'_0(q)$}&\mltc{1}{c|}{$x'(q)$}&
 \mltc{1}{ c}{$q$}&\mltc{1}{c}{$x'_0(q)$}&\mltc{1}{c|}{$x'(q)$}&
 \mltc{1}{ c}{$q$}&\mltc{1}{c}{$x'_0(q)$}&\mltc{1}{c|}{$x'(q)$} \\
   3 &  21029&      40486   &      4 &  33485&      50922 &     6 & 55938&      62679\\
  \midrule
 \mltc{9}{|c|}{$\alpha=0.8$, $\delta=0$, $\rho=6$, $m'=120$, $\ell'=8$}\\
 \mltc{1}{|c}{$q$}&\mltc{1}{c}{$x'_0(q)$}&\mltc{1}{c|}{$x'(q)$}&
 \mltc{1}{ c}{$q$}&\mltc{1}{c}{$x'_0(q)$}&\mltc{1}{c|}{$x'(q)$}&
 \mltc{1}{ c}{$q$}&\mltc{1}{c}{$x'_0(q)$}&\mltc{1}{c|}{$x'(q)$} \\
   3 &  69520&     108608    &      4 &   110696&     139012 &&&\\
  \midrule
 \mltc{9}{|c|}{$\alpha=0.7$, $\delta=0$, $\rho=5$, $m'=500$, $\ell'=26$: no exceptions}\\
  \bottomrule
\end{tabular}
\end{table}
}

{
\begin{table}[H]
\caption{Constants for the proof of Theorem~\ref{th:A2}: small $q$'s.}\label{tab:A7}
\centering
\small
\begin{tabular}{|rr|rr||rr|rr|}
  \toprule
\mltc{1}{|c}{$q$}&\mltc{1}{c|}{$x_0$} & \mltc{1}{|c}{$q$}&\mltc{1}{c||}{$x_0$} & \mltc{1}{c}{$q$}&\mltc{1}{c|}{$x'_0$} & \mltc{1}{|c}{$q$}&\mltc{1}{c|}{$x'_0$}\\
  \midrule
 3 &  43741 &  9                   &  273368                & 3 &  98197 &  9                   &  826355                \\
 4 &  41398 & 10                   &  126848                & 4 & 108188 & 10                   &  419894                \\
 5 & 141162 & 11                   &  690311                & 5 & 317506 & 11                   & 2381080                \\
 6 &  38467 & 12                   &  126684                & 6 & 122626 & 12                   &  447783                \\
 7 & 283378 & $13 \leq q\leq  100$ & $(34\eul(q)\ell(q))^2$ & 7 & 739830 & $13 \leq q\leq  100$ & $(52\eul(q)\ell(q))^2$ \\
 8 & 131137 & $100\leq q\leq  660$ & $(10\eul(q)\ell(q))^2$ & 8 & 386260 & $100\leq q\leq 1320$ & $(20\eul(q)\ell(q))^2$ \\
  \bottomrule
\end{tabular}
\end{table}
}

\begin{table}[H]
\caption{Constants for the proof of Theorem~\ref{th:A2}: large $q$'s.}\label{tab:A8}
\centering
{
\begin{tabular}{|rr||rr|}
  \toprule
\mltc{1}{|c}{$m$}&\mltc{1}{c||}{$q_0$} & \mltc{1}{c}{$m'$}&\mltc{1}{c|}{$q'_0$}\\
  \midrule
  8 &         660 & 14 &    343072 \\
  9 &         168 & 15 &      1320 \\
 10 &         111 & 16 &       330 \\
  \bottomrule
\end{tabular}
}
\end{table}

\enlargethispage{23\baselineskip}
\SetKwFunction{Hone}{h1}
\begin{function}
  \KwIn{Three reals {$\alpha$, $\delta$, $\rho$}}
  \KwIn{Two integers {$q$, $x$}}
  \caption{h1($\alpha$, $\delta$, $\rho$, $q$, $x$)}
  \Return{$(\alpha\log x+\delta\log q+\rho)\eul(q)\sqrt{x}$}\;
\end{function}
\begin{procedure}
  \SetKwFor{Forprime}{forprime}{do}{endfp}
  \SetKw{Continue}{continue}
  \SetKwFunction{Floor}{floor}
  \SetKwFunction{Print}{print}
  \SetAlgoLined
  \KwIn{Three reals {$\alpha$, $\delta$, $\rho$}}
  \KwIn{Three integers {$q$, $x_0$, $x$}}
  \caption{Check1($\alpha$, $\delta$, $\rho$, $q$, $x_0$, $x$)}

  \For{$a\leftarrow 1$ \KwTo $q-1$}{
      \lIf{$(a,q)\neq 1$}{\Continue}
      $M_1[a]\leftarrow x_0+\text{\Hone}(\alpha,\delta,\rho,q,x_0)$\;
  }
  \Forprime{$p\leftarrow x_0$ \KwTo $x + \text{\Hone}(\alpha,\delta,\rho,q,x)$}{
      $a\leftarrow p\mod q$\;
      \lIf{$(a,q)\neq 1$}{\Continue}
      \If{$M_1[a]\leq p$}{
          \Print("Problem with class ", $a$, " mod ", $q$, " for $x=$", $M_1[a]$)\;
      }
      $M_1[a]\leftarrow p+\text{\Hone}(\alpha,\delta,\rho,q,p)$\;
  }
  \For{$a\leftarrow 1$ \KwTo $q-1$}{
      \lIf{$(a,q)\neq 1$}{\Continue}
      \If{$M_1[a]<x$}{
          \Print("Problem with class ", $a$, " mod ", $q$, " for $x=$", $M_1[a]$)\;
      }
  }
\end{procedure}
\SetKwFunction{Hsqrt}{hsqrt}
\begin{function}
  \KwIn{Three reals {$\alpha$, $\delta$, $\rho$}}
  \KwIn{Two integers {$q$, $x$}}
  \caption{hsqrt($\alpha$, $\delta$, $\rho$, $q$, $x$)}
  \Return{$((\alpha+1)\log x+\delta\log q+\rho)\eul(q)\sqrt{x}$}\;
\end{function}

\begin{procedure}
  \SetKwFor{Forprime}{forprime}{do}{endfp}
  \SetKw{Continue}{continue}
  \SetKwFunction{Floor}{floor}
  \SetKwFunction{Print}{print}
  \SetAlgoLined
  \KwIn{Three reals {$\alpha$, $\delta$, $\rho$}}
  \KwIn{Three integers {$q$, $x_0$, $x$}}
  \caption{CheckSqrt($\alpha$, $\delta$, $\rho$, $q$, $x'_0$, $x'$)}

  \For{$a\leftarrow 1$ \KwTo $q-1$}{
      \lIf{$(a,q)\neq 1$}{\Continue}
      $M_s[a]\leftarrow x'_0+\text{\Hsqrt}(\alpha,\delta,\rho,q,x'_0)$\;
      $N[a]\leftarrow \text{\Floor}(M_s[a])+1$\;
  }
  \Forprime{$p\leftarrow x'_0$ \KwTo $x' + \text{\Hsqrt}(\alpha,\delta,\rho,q,x')$}{
      $a\leftarrow p\mod q$\;
      \lIf{$(a,q)\neq 1$}{\Continue}
      $N[a]\leftarrow N[a]-1$\;
      \lIf{$N[a]\neq 0$}{\Continue}
      \If{$M_s[a]\leq p$}{
        \Print("Problem for sqrt claim with class ", $a$, " mod ", $q$, " for $x=$", $M_s[a]$)\;
      }
      $M_s[a]\leftarrow p+\text{\Hsqrt}(\alpha,\delta,\rho,q,p)$\;
      $N[a]\leftarrow \text{\Floor}(M_s[a])+1$\;
  }
  \For{$a\leftarrow 1$ \KwTo $q-1$}{
      \lIf{$(a,q)\neq 1$}{\Continue}
      \If{$M_s[a]<x'$}{
          \Print("Problem for sqrt claim with class ", $a$, " mod ", $q$, " for $x=$", $M_s[a]$)\;
      }
  }
\end{procedure}
\clearpage

\newpage
\begin{acknowledgements}
All computations have been done using PARI/GP~\cite{PARI2}. We have made available at the address:\\
\url{http://users.mat.unimi.it/~molteni/research/primes/progressions.gp}\\
the code we have used to compute the constants in this paper.
\end{acknowledgements}


\renewcommand{\thefootnote}{}\footnotetext{\today\ at \currenttime\par\jobname}
\end{document}